\newtheorem{thm}[equation]{Theorem}
\newtheorem{lem}[equation]{Lemma}
\newtheorem{prop}[equation]{Proposition}
\newtheorem{cor}[equation]{Corollary}
\newtheorem{defi}[equation]{Definition}
\newtheorem{conj}[equation]{Conjecture}
\newtheorem{rem}[equation]{Remark}
\numberwithin{equation}{section}
\newcommand{\R}{\mathbb{R}}
\newcommand{\X}{\mathcal{X}}
\newcommand{\N}{\mathbb{N}}
\newcommand{\E}{\mathbb{E}}
\newcommand{\1}{\mathds{1}}
\begin{document}
\title[From dimension free concentration to the Poincar\'e inequality]
{From dimension free concentration to the Poincar\'e inequality}
\author{N. Gozlan, C. Roberto, P-M. Samson}

\date{\today}

\address{Universit\'e Paris Est Marne la Vall\'ee - Laboratoire d'Analyse et de Math\'e\-matiques Appliqu\'ees (UMR CNRS 8050), 5 bd Descartes, 77454 Marne la Vall\'ee Cedex 2, France}

\address{Universit\'e Paris Ouest Nanterre la D\'efense, MODAL'X, EA 3454, 200 avenue de la R\'epublique 92000 Nanterre, France}

\email{nathael.gozlan@univ-mlv.fr, croberto@math.cnrs.fr, paul-marie.samson@univ-mlv.fr}

\thanks{The authors were partially supported by the ``Agence Nationale de la Recherche'' through the grants ANR 2011 BS01 007 01 and  ANR 10 LABX-58.}

\keywords{Poincar\'e inequality, concentration of measure, metric spaces, Hamilton-Jacobi equation}
\subjclass{60E15, 32F32 and 26D10}

\begin{abstract}
We prove that a probability measure on an abstract metric space satisfies a non trivial dimension free concentration inequality for the $\ell_2$ metric if and only if it satisfies the Poincar\'e inequality.
Under some additional assumptions, our result extends to convex sets situation.
\end{abstract}
\maketitle

\section{Introduction}
Throughout the paper $(\X,d)$ denotes a polish metric space and $\mathcal{P}(\X)$ the set of Borel probability measures on  $\X.$
On the product space $\X^n$, we consider the following $\ell_p$ product distance $d_p$ defined by
\[
d_p(x,y) = \left[ \sum_{i=1}^n d^p(x_i,y_i)\right]^{1/p},\qquad x,y\in \X^n,\qquad p\geq 1.
\]
(Note that the dependence on the dimension $n$ is understood.)
If $A$ is a Borel subset of $\X^n$, we define its enlargement $A_{r,p}$ (simply denoted by $A_r$ when $n=1$), $r\geq0$ as follows
\[
A_{r,p} =\{ x \in \X^n ;  d_p(x,A) \leq r\}.
\]
Also, in all what follows, $\alpha : \R^+ \to \R^+$ will always be a non increasing function. One will say that $\mu \in \mathcal{P}(\X)$ satisfies the \emph{dimension free concentration property} with the concentration profile $\alpha$ and with respect to the $\ell_p$ product structure if
\begin{equation}\label{eq:CI}
\mu^n(A_{r,p})\geq 1-\alpha(r),\qquad \forall r\geq 0,
\end{equation}
for all $A\subset \X^n$, with $\mu^n(A)\geq 1/2$. For simplicity,  we will often say that $\mu$ satisfies the dimension free concentration inequality $\mathbf{CI}_p^\infty(\alpha)$, and, if $\mu$ satisfies \eqref{eq:CI} only for $n=1$, we will say that $\mu$ satisfies $\mathbf{CI}(\alpha).$
We refer to \cite{Ledoux-book} for an introduction on the notion of concentration of measure.

The general problem considered in this paper is to give a characterization of the class of probability measures satisfying $\mathbf{CI}_p^\infty(\alpha)$. Our main result shows that the class of probability measures satisfying $\mathbf{CI}^\infty_2(\alpha)$, for some non trivial $\alpha$, always contains the class of probability measures satisfying the Poincar\'e inequality. Moreover, these two classes coincide when $\alpha$ is exponential: $\alpha(r)=be^{-ar}$, for some $a,b>0.$

Before stating this result, let us recall the definition of the Poincar\'e inequality:  one says that $\mu \in \mathcal{P}(\X)$ satisfies the \emph{Poincar\'e inequality} with the constant $\lambda \in \R^+\cup\{+\infty\}$, if 
\begin{equation}\label{Poincare}
\lambda\mathrm{Var}_\mu (f) \leq \int |\nabla^- f|^2\,d\mu,
\end{equation}
for all Lipschitz function $f:\X\to \R$, where by definition 
\[
|\nabla^- f|(x) = \limsup_{y \to x} \frac{[f(y)-f(x)]_-}{d(y,x)}, 
\qquad \qquad (\mbox{with } [X]_-:=\max(-X,0))
\]
when $x$ is not isolated in $\X$ (we set $|\nabla^- f|(x)=0$, when $x$ is isolated in $\X$).
By convention $\infty \times 0 =0$, so that $\lambda=+\infty$ if and only if $\mu$ is a Dirac measure.

\begin{rem}\label{rem Poincare}Let us make a few comments about the definition of the Poincar\'e inequality.
\begin{enumerate}
\item Since the right hand side of \eqref{Poincare} is always finite when $f$ is Lipschitz, it follows in particular that Lipschitz functions always have finite variance when Poincar\'e inequality holds.
\item When $(\X,d)$ is a smooth Riemannian manifold equipped with its geodesic distance and $f:\X\to\R$, it is not difficult to check that if $f$ is differentiable at a point $x$, then $|\nabla^-f|(x)$ coincides with the norm of the \emph{vector} $\nabla f(x)$ (belonging to the tangent space at $x$). 
If $(\X,d)=(B,\|\,\cdot\,\|)$ is a Banach space, and $f:B\to \R$ is differentiable at $x\in B$, then $|\nabla^-f|(x)$ is equal to $\|Df(x)\|_*$, the dual norm of the differential $Df(x)\in B^*$ of $f$ at $x$.
So \eqref{Poincare} gives back the usual definitions in a smooth context.
\item To a Lipschitz function $f$ on $\X$, one can also associate $|\nabla^+ f|$ and $|\nabla f|$, which are defined by replacing $[\,\cdot\,]_-$ by $[\,\cdot\,]_+$ or by $|\,\cdot\,|$ respectively.
Since $|\nabla^+ f|=|\nabla^-(- f)|$, replacing $f$ by $-f$, we observe that the Poincar\'e inequality can be equivalently restated as 
\begin{equation}\label{Poincare+}
\lambda\mathrm{Var}_\mu (f) \leq \int |\nabla^+ f|^2\,d\mu,
\end{equation}
for all Lipschitz function $f:\X\to \R$. Moreover, since $|\nabla f|=\max(|\nabla^-f|;|\nabla^+f|)$, we see that \eqref{Poincare} and \eqref{Poincare+} both imply yet another version of Poincar\'e inequality (considered for instance in \cite{Ledoux-book} or \cite{BL97}):
\begin{equation}\label{Poincare abs}
\lambda\mathrm{Var}_\mu (f) \leq \int |\nabla f|^2\,d\mu,
\end{equation}
for all Lipschitz function $f:\X\to \R$. That \eqref{Poincare abs} also implies \eqref{Poincare} is not obvious. A proof of this fact can be found in \cite[Proposition 5.1]{GL13} (the result is stated for the logarithmic Sobolev inequality but the same conclusion holds for the Poincar\'e inequality). The proof relies on a technique, developed in \cite{AGS12}, consisting in relaxing the right hand side of \eqref{Poincare abs} and yielding to the notion of Cheeger energy.
\end{enumerate}
\end{rem}

%%%%%%%%%%%%%%%%%%%%%%%%%%%%%%%%%%%%%%%%%%%%%%%%%%%%%%%%%%%%%%%%%%%%%%%%%%%%%%%%%%%
%%%%%%%%%%%%%%%%%%%%%%%%%%%%%%%%%%%%%%%%%%%%%%%%%%%%%%%%%%%%%%%%%%%%%%%%%%%%%%%%%%%

\subsection{Main results.}

Denote by 
$\overline{\Phi}$ the tail distribution function of the standard Gaussian measure $\gamma(dx)=(2\pi)^{-1/2} e^{-x^2/2}\,dx$ on $\R$ defined by 
\[
\overline{\Phi} (x) = \frac{1}{\sqrt{2\pi}}\int_{x}^{+\infty} e^{-u^2/2}\,du,\qquad x\in \R.
\]
The main result of this paper is the following theorem. %In what follows, $\overline{\Phi}$ will denote the tail distribution function of the standard Gaussian measure $\gamma(dx)=(2\pi)^{-1/2} e^{-x^2/2}\,dx$ on $\R$ defined by 
%\[
%\overline{\Phi} (x) = \frac{1}{\sqrt{2\pi}}\int_{x}^{+\infty} e^{-u^2/2}\,du,\qquad x\in \R.
%\]

\begin{thm}\label{main result} 
If $\mu$ satisfies the dimension free concentration property $\mathbf{CI}_2^\infty(\alpha)$, then  $\mu$ satisfies the Poincar\'e inequality \eqref{Poincare} with the constant $\lambda$ defined by
\[
\sqrt{\lambda}= \sup\left\{\frac{ \overline{\Phi}^{-1} \left(\alpha(r)\right)}{r}; r> 0 \text{ s.t } \alpha(r)\leq 1/2  \right\}.
\]
Moreover, if $\alpha$ is convex decreasing and such that $\alpha(0)=1/2$, then $\lambda=(2\pi \alpha'_+(0)^2),$ where $\alpha'_+(0) \in [-\infty, 0)$ is the right derivative of $\alpha$ at $0.$
\end{thm}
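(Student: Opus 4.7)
The plan is to combine the hypothesis $\mathbf{CI}_2^\infty(\alpha)$ with the central limit theorem on $(\X^n, d_2)$ in order to extract a Gaussian tail estimate from the general one. Fix a Lipschitz $f : \X \to \R$ with $\int f\,d\mu = 0$ and set $\sigma^2 := \mathrm{Var}_\mu(f)$; we may assume $\sigma > 0$, the case $\sigma = 0$ being trivial. Form the normalized sum $F_n(x_1,\ldots,x_n) := n^{-1/2}\sum_{i=1}^n f(x_i)$, which is $\|f\|_{\mathrm{Lip}}$-Lipschitz on $(\X^n, d_2)$ by Cauchy--Schwarz. Applying $\mathbf{CI}_2^\infty(\alpha)$ to the sub-level set $\{F_n \leq m_n\}$ (with $m_n$ a median of $F_n$ under $\mu^n$) yields $\mu^n(F_n > m_n + \|f\|_{\mathrm{Lip}}\,r) \leq \alpha(r)$. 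The classical CLT gives $F_n \to \mathcal{N}(0,\sigma^2)$ in law and $m_n \to 0$; passing to the limit produces $\overline{\Phi}(r\,\|f\|_{\mathrm{Lip}}/\sigma) \leq \alpha(r)$ whenever $\alpha(r) \leq 1/2$, whence $\sqrt{\lambda}\,\sigma \leq \|f\|_{\mathrm{Lip}}$.

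This only yields $\lambda\,\mathrm{Var}_\mu(f) \leq \|f\|_{\mathrm{Lip}}^2$, which is weaker than the target Poincar\'e inequality since the desired right-hand side is $\int |\nabla^- f|^2\,d\mu$, not $\|f\|_{\mathrm{Lip}}^2$. To bridge the gap, one exploits the identity
\[
|\nabla^- F_n|^2(x_1,\ldots,x_n) = \frac{1}{n}\sum_{i=1}^n |\nabla^- f|^2(x_i),
\]
so by the law of large numbers the slope of $F_n$ equals $\int |\nabla^- f|^2\,d\mu$ up to $o(1)$ on a $\mu^n$-set of mass tending to one. Fix $\varepsilon > 0$, set $L_\varepsilon^2 := \int |\nabla^- f|^2\,d\mu + \varepsilon$ and introduce the good set $T_n := \{x \in \X^n : n^{-1}\sum_i |\nabla^- f|^2(x_i) \leq L_\varepsilon^2\}$, with $\mu^n(T_n) \to 1$. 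Replace $F_n$ by a McShane-type modification $\widetilde{F}_n : \X^n \to \R$ that agrees with $F_n$ on $T_n$ and is globally $L_\varepsilon$-Lipschitz on $(\X^n, d_2)$; then $\mathbf{CI}_2^\infty(\alpha)$ applied to $\widetilde{F}_n$ gives the sharper tail bound with $L_\varepsilon$ in place of $\|f\|_{\mathrm{Lip}}$. Since $\widetilde{F}_n = F_n$ on a set of mass $\to 1$, the medians of $\widetilde{F}_n$ also tend to $0$ and the CLT transfers; letting $n \to \infty$ and then $\varepsilon \downarrow 0$ delivers $\sqrt\lambda\,\sigma \leq \sqrt{\int |\nabla^- f|^2\,d\mu}$, which is the Poincar\'e inequality.

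The main technical obstacle is the construction of $\widetilde{F}_n$: the pointwise bound $|\nabla^- F_n| \leq L_\varepsilon$ on $T_n$ does not automatically translate into a global $d_2$-Lipschitz bound on $F_n|_{T_n}$, because $d_2$-geodesics between two points of $T_n$ may exit $T_n$. Circumventing this appears to require either working inside a geodesically convex approximation of $T_n$ and then extending by McShane, or a prior truncation/regularization of $f$ itself that controls $|\nabla^- f|$ uniformly while essentially preserving both $\mathrm{Var}_\mu(f)$ and $\int |\nabla^- f|^2\,d\mu$.

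For the second assertion, set $g(r) := \overline{\Phi}^{-1}(\alpha(r))/r$, so $\sqrt\lambda = \sup_{r>0} g(r)$. The convexity of $\alpha$ together with the convexity of $\overline{\Phi}^{-1}$ on $(0, 1/2]$ (which follows from the identity $(\overline{\Phi}^{-1})'' = \overline{\Phi}^{-1}\cdot [(\overline{\Phi}^{-1})']^2 \geq 0$ on that range, since $\overline{\Phi}^{-1}\geq 0$ there) implies that $g$ is non-increasing on $(0,\infty)$, so $\sup g = \lim_{r \to 0^+} g(r)$. This limit is a $0/0$ indeterminate form since $\overline{\Phi}^{-1}(1/2) = 0$; L'H\^opital's rule, with $\overline{\Phi}'(0) = -(2\pi)^{-1/2}$, gives $\lim_{r\to 0^+} g(r) = -\sqrt{2\pi}\,\alpha'_+(0)$, and squaring yields $\lambda = 2\pi\,(\alpha'_+(0))^2$.
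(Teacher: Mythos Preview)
Your proposal contains a genuine gap, which you yourself identify: the passage from the weak bound $\sqrt{\lambda}\,\sigma\leq \|f\|_{\mathrm{Lip}}$ to the full Poincar\'e inequality via a ``McShane-type modification'' $\widetilde F_n$ does not go through. The pointwise control $|\nabla^-F_n|\leq L_\varepsilon$ on $T_n$ does \emph{not} give a global $L_\varepsilon$-Lipschitz bound for $F_n$ restricted to $T_n$ (geodesics may leave $T_n$), so there is no $L_\varepsilon$-Lipschitz extension to feed into $\mathbf{CI}_2^\infty(\alpha)$. Neither of the two escape routes you sketch (a geodesically convex approximation of $T_n$, or a regularisation of $f$) is carried out, and the first one looks hopeless in a general metric space. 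So as written the argument stops at $\lambda\,\mathrm{Var}_\mu(f)\leq \|f\|_{\mathrm{Lip}}^2$.

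The paper bypasses this obstacle by a different mechanism. It does \emph{not} apply concentration to the Lipschitz function $F_n$ directly; instead it uses the equivalent formulation of $\mathbf{CI}_2^\infty(\alpha)$ in terms of the inf-convolution operator $Q_t$ (Proposition~\ref{prop-GRS11}): for any $g$ bounded below,
\[
\mu^n\bigl(Q_t g>m(g)+r\bigr)\leq \alpha\bigl(\sqrt{rt}\bigr).
\]
Applied to $f_n(x)=\sum_i h(x_i)$ with $t=1/\sqrt n$, the quadratic cost factorises, $Q_{1/\sqrt n}f_n(x)=\sum_i Q_{1/\sqrt n}h(x_i)$, so the deviation event concerns a \emph{triangular array} $Q_{1/\sqrt n}h(x_i)$. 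The CLT is then applied to this array. The crucial point is that the drift between $h$ and $Q_{1/\sqrt n}h$ is controlled by the Hamilton--Jacobi type estimate (Proposition~\ref{prop:Qt}\,(ii)):
\[
\limsup_{n\to\infty}\sqrt n\,\mu\bigl(h-Q_{1/\sqrt n}h\bigr)\ \leq\ \tfrac14\int|\nabla^-h|^2\,d\mu,
\]
and this is precisely what makes $\int|\nabla^-h|^2\,d\mu$ appear instead of $\|h\|_{\mathrm{Lip}}^2$. After the limit one gets $\overline\Phi\!\left(\tfrac{1}{4\sigma}\int|\nabla^-h|^2\,d\mu+\tfrac{u}{\sigma}\right)\leq\alpha(\sqrt u)$, and a homogeneity/optimisation step finishes. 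In short: the missing idea is to use $Q_t$ as the regularisation, exploiting both the $Q_t$-reformulation of concentration and the link between $Q_t$ and $|\nabla^-h|^2$ at $t\to0$.

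A secondary remark on your treatment of the ``moreover'' part: the step ``convexity of $\alpha$ together with convexity of $\overline\Phi^{-1}$ implies $g(r)=\overline\Phi^{-1}(\alpha(r))/r$ is non-increasing'' is not justified as stated. The composition $\overline\Phi^{-1}\circ\alpha$ of two convex decreasing functions has second derivative of indeterminate sign, so you have not shown it is concave (which is what would force $g$ to be non-increasing). The L'H\^opital computation of $\lim_{r\to0^+}g(r)=-\sqrt{2\pi}\,\alpha'_+(0)$ is correct and already gives $\sqrt\lambda\geq -\sqrt{2\pi}\,\alpha'_+(0)$; for the reverse inequality you need an additional argument.
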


Conversely, it is well known -- since the work by Gromov and Milman \cite{GM83} (see also \cite{AS94}, \cite{BL97}, \cite{Sch98}, \cite{Goz10} for related results) -- that a probability measure $\mu$ verifying the Poincar\'e inequality satisfies a dimension free concentration property with a profile of the form $\alpha(r)=be^{-ar}$, for some $a,b>0$. We recall this property in the following theorem and refer to Section \ref{sec:gromov-milman} for a proof. 

\begin{thm}[Gromov and Milman]\label{Gromov Milman}
Suppose that $\mu$ satisfies the Poincar\'e inequality \eqref{Poincare} with a constant $\lambda>0$, then it satisfies the dimension free concentration property with the profile 
\[
\alpha(r)=b\exp(-a\sqrt{\lambda}r),\qquad r\geq0,
\]
where $a,b$ are universal constants.
\end{thm}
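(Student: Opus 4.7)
The plan is to follow the classical Gromov--Milman scheme: tensorize the Poincar\'e inequality to $\mu^n$ on $(\X^n,d_2)$, derive an exponential moment bound for $d_2$-Lipschitz functions via the Aida--Masuda--Shigekawa trick, and conclude by Markov applied to $F(x)=d_2(x,A)$.

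\textbf{Tensorization.} I would first show that Poincar\'e with constant $\lambda$ on $(\X,d)$ passes to $\mu^n$ on $(\X^n, d_2)$ with the same constant. The starting point is the classical one-coordinate variance decomposition
\[
\mathrm{Var}_{\mu^n}(f) \leq \sum_{i=1}^n \int \mathrm{Var}_\mu\bigl(f(\bar{x}_i,\,\cdot\,)\bigr)\,d\mu^{n-1}(\bar{x}_i),
\]
where $\bar{x}_i$ denotes the tuple with the $i$-th coordinate omitted, combined with the one-dimensional Poincar\'e inequality applied to each partial function. Denoting by $|\nabla^-_i f|(x)$ the lower slope of $f(\bar{x}_i,\,\cdot\,)$ at $x_i$, one has $|\nabla^-_i f|(x) \leq |\nabla^- f|_{d_2}(x)$ by testing sequences $z\to x$ varying only in the $i$-th coordinate. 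In the smooth Euclidean/Riemannian case the additional identity $\sum_i |\nabla^-_i f|^2 = |\nabla^- f|^2_{d_2}$ closes the argument; in the general metric setting one passes through the Cheeger-energy formulation invoked in Remark~\ref{rem Poincare}(3), which tensorizes cleanly for the $\ell_2$ product.

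\textbf{Exponential moments.} For any $1$-Lipschitz $f:(\X^n,d_2)\to\R$ with $\int f\,d\mu^n=0$, set $Z(s)=\int e^{sf}\,d\mu^n$; this is finite by Remark~\ref{rem Poincare}(1). Applying Poincar\'e on $(\X^n,d_2)$ to $g=e^{sf/2}$ and using the chain-rule bound $|\nabla^- g|\leq (s/2)\,e^{sf/2}|\nabla^- f|\leq (s/2)\,e^{sf/2}$ for $s>0$, I obtain
\[
\lambda\bigl(Z(s)-Z(s/2)^2\bigr) \leq \tfrac{s^2}{4}\,Z(s),
\]
hence the recursion $Z(s)\leq Z(s/2)^2/\bigl(1-s^2/(4\lambda)\bigr)$, valid for $s<2\sqrt{\lambda}$. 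Iterating $K$ times and using that $2^K\log Z(s/2^K)\to 0$ as $K\to\infty$ (Taylor expansion at $0$, using $\int f\,d\mu^n=0$ and $f\in L^2(\mu^n)$), I derive a bound $Z(\sqrt{\lambda})\leq C_0$ with $C_0$ a universal constant, uniform in $n$ and $f$.

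\textbf{Conclusion.} Given $A\subset\X^n$ with $\mu^n(A)\geq 1/2$, take $F(x)=d_2(x,A)$: this is $1$-Lipschitz with respect to $d_2$ and has median~$0$ (since $F=0$ on $A$ and $\mu^n(A)\geq 1/2$). Applying the previous step to $F-m$ with $m:=\int F\,d\mu^n$, Markov yields
\[
\mu^n(F\geq r)=\mu^n(F-m\geq r-m)\leq C_0\, e^{-\sqrt{\lambda}(r-m)}\qquad (r\geq m);
\]
since $m\leq\sqrt{2\,\mathrm{Var}(F)}\leq\sqrt{2/\lambda}$ by Chebyshev around the median together with Poincar\'e, absorbing this shift into the prefactor gives $\mu^n(A_{r,2}^c)\leq b\,e^{-a\sqrt{\lambda}\,r}$ for all $r\geq 0$, with universal $a,b>0$.

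\textbf{Main obstacle.} The delicate step is the tensorization: working with the asymmetric slope $|\nabla^-|$ in the abstract metric setting, the naive pointwise bound $|\nabla^-_i f|\leq|\nabla^- f|_{d_2}$ is not sharp enough to recover Poincar\'e on $\mu^n$ with the same constant $\lambda$. The way around is to work with the relaxed gradient (Cheeger energy) as in \cite{AGS12}, for which tensorization on $\ell_2$-products is well behaved; once this is in place, the remaining two steps are classical.
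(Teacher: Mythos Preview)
Your overall scheme---tensorize Poincar\'e, run a Herbst-type argument to get exponential moments for Lipschitz functions, then apply it to $F=d_2(\,\cdot\,,A)$---is exactly the paper's. The essential difference is how the tensorization obstacle you correctly flag is handled.

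The paper does \emph{not} try to establish the Poincar\'e inequality on $(\X^n,d_2)$ with the full slope $|\nabla^-|_{d_2}$. It uses only the elementary coordinate-wise tensorization
\[
\lambda\,\mathrm{Var}_{\mu^n}(g)\le \int \sum_{i=1}^n |\nabla_i^{-} g|^2\,d\mu^n,
\]
and then restricts the Herbst argument (Proposition~\ref{Herbst}, applied to $g=e^{sf}$) to functions $f$ satisfying the \emph{stronger} pointwise condition $\sum_{i=1}^n |\nabla_i^{\pm} f|^2\le 1$, not merely $|\nabla^- f|_{d_2}\le 1$. The key device is a regularization: rather than $d_2(\,\cdot\,,A)$ itself, one takes $f_{A,\varepsilon}(x)=\sqrt{\varepsilon+d_2^2(x,A)}=-R_\varepsilon i_A$, and a direct elementary computation (Lemma~\ref{approx}(i)) shows that $\sum_i |\nabla_i^{+} f_{A,\varepsilon}|^2\le 1$ pointwise. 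One lets $\varepsilon\to 0$ at the end. This completely bypasses any Cheeger-energy machinery and keeps the proof self-contained.

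Your route via the relaxed gradient might be made to work but is substantially heavier, and the sketch you give does not justify it: Remark~\ref{rem Poincare}(3) concerns the equivalence of \eqref{Poincare} and \eqref{Poincare abs} on a \emph{single} space, not the $\ell_2$-tensorization of the Cheeger energy. Two further small gaps: your appeal to Remark~\ref{rem Poincare}(1) for the finiteness of $Z(s)$ is unwarranted (finite variance does not give exponential integrability; the paper first treats bounded $f$ and then truncates), and your bound $\mathrm{Var}_{\mu^n}(F)\le 1/\lambda$ in the conclusion presupposes the same unproved product Poincar\'e inequality.
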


Thus, Theorem \ref{main result} and Theorem \ref{Gromov Milman} give a full description of the set of probability distributions verifying a dimension free concentration property with a concentration profile $\alpha$ such that $\{r :\alpha(r)<1/2\}\neq \emptyset$ : this set coincides with the set of probability measures verifying the Poincar\'e inequality. An immediate corollary of Theorem \ref{main result} and Theorem \ref{Gromov Milman} (see Corollary \ref{Cor Tal} below) is that any type of dimension free concentration inequality can always be improved into a dimension free concentration inequality with an exponential profile (up to universal constants). This was already noticed by Talagrand in \cite{Tal91}. See Section \ref{self improvement} for a further discussion.

\begin{rem}\label{rem-intro}
Let us make some comments on the constant $\lambda$ appearing in Theorem \ref{main result}.
\begin{enumerate}
\item We observe first that $\lambda>0$ if and only if there is some $r_o>0$ such that $\alpha(r_o)<1/2.$ In particular, Theorem \ref{main result} applies in the case of the following ``minimal" profile $\alpha=\beta_{a_o,r_o}$, defined as follows
\begin{equation}\label{minimal profile}
\beta_{a_o,r_o}(r)=1/2,\ \text{ if } r<a_o\qquad\text{and}\qquad \beta_{a_o,r_o}(r)=a_o,\ \text { if } r\geq r_o,
\end{equation}
where  $a_o \in [0,1/2)$, $r_o>0$.
If a probability measure satisfies $\mathbf{CI}_2^\infty(\beta_{a_o,r_o})$, then it satisfies the Poincar\'e inequality  with the constant
\[
\sqrt{\lambda_{a_o,r_o}} :=  \frac{ \overline{\Phi}^{-1} \left(a_o\right)}{r_o}
\]
\item Then we notice that any non increasing function $\alpha:\R^+\to \R^+$, with $\alpha(0)=1/2$, can be written as an infimum of minimal profiles: 
\[
\alpha = \inf_{r>0} \beta_{\alpha(r),r}.
\]
Therefore, the constant $\lambda$ given in Theorem \ref{main result} is the supremum of the constants $\lambda_{\alpha(r),r}$ $r>0$ defined above. This shows that the information contained in the concentration profile $\alpha$ is treated pointwise, and that the global behavior of $\alpha$ is not taken into account. 
\item It is well known that the standard Gaussian measure $\gamma$ satisfies the dimension free concentration property with the profile $\alpha=\overline{\Phi}$ (this follows from the Isoperimetric theorem in Gauss space due to Sudakov-Cirelson \cite{SC74} and Borell \cite{Bor75}, see \textit{e.g.}\ \cite{Ledoux-book}). Hence, applying Theorem \ref{main result}, we conclude that $\gamma$ satisfies the Poincar\'e inequality with the constant $\lambda=1$, which is well known to be optimal (see \textit{e.g.}\ \cite[Chapter 1]{ane}).
\item Finally we observe that, if the concentration profile $\alpha(r)$ goes to zero too fast when $r\to \infty$, then $\lambda=+\infty$ and so $\mu$ is a Dirac measure. This happens for instance when $\alpha(r)=be^{-ar^k}$, $r\geq0$, with $k>2$ and $a,b>0.$
\end{enumerate}
\end{rem}
Theorem \ref{main result} completes a previous result obtained by the first author \cite{Goz09} (see also \cite{GRS11}), namely that the Gaussian dimension free concentration is characterized by a transport-entropy inequality. We now state this result and start by recalling some notation.
The Kantorovich-Rubinstein distance $W_p$, $p\geq1$, between $\nu,\mu \in \mathcal{P}(\X)$ is defined by
$$
W_p^p(\nu,\mu)=\inf_{\Pi(\nu,\mu)} \E[d^p(X,Y)],
$$
where the infimum runs over the set $\Pi(\nu,\mu)$ of couples of random variables $(X,Y)$ such that $\mathrm{Law}(X)=\mu$ and $\mathrm{Law}(Y)=\nu$. Then, a probability measure is said to satisfy the \emph{$p$-Talagrand transport-entropy inequality} with constant $C>0$, if it holds
\begin{equation}\label{p-Tal}
W_p^p(\nu,\mu) \leq CH(\nu|\mu),\qquad \forall \nu \in \mathcal{P}(\X),
\end{equation}
where the relative entropy functional is defined by $H(\nu|\mu)=\int \log \frac{d\nu}{d\mu}\,d\nu$ if $\nu$ is absolutely continuous with respect to $\mu$, and $H(\nu|\mu)=+\infty$ otherwise. Inequalities of this type were introduced by Marton and Talagrand in the nineties \cite{M86,Tal96}. We refer to the survey \cite{GL10} for more informations on this topic.
\begin{thm}[\cite{Goz09}] \label{thm:Goz09}
Fix $p\geq2$ and $C>0$. Then, a  probability measure $\mu$ satisfies the $p$-Talagrand transport inequality \eqref{p-Tal} with constant $C$ if and only if it satisfies the dimension free concentration inequality $\mathbf{CI}_p^\infty(\alpha)$, with a concentration profile of the form
\[
\alpha(r)=\exp\left(-\frac{1}{C} [r-r_o]_+^p\right),\qquad r\geq0,
\]
for some $r_o\geq0$ (with  $[X]_+:=\max(X,0)$).
\end{thm}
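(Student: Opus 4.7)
The plan is to prove each implication of the equivalence separately. The forward direction "$T_p(C) \Rightarrow \mathbf{CI}_p^\infty(\alpha)$" is the classical Marton argument, while the converse "$\mathbf{CI}_p^\infty(\alpha) \Rightarrow T_p(C)$" is the more substantial part and proceeds by transferring dimension-free concentration of the empirical measure into a Sanov-type large deviation lower bound for $H(\,\cdot\,|\mu)$.

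For the forward direction, the first step is to tensorize $T_p$: a straightforward conditioning argument on an optimal coupling shows that if $\mu$ satisfies $W_p^p(\,\cdot\,, \mu) \le C H(\,\cdot\,|\mu)$, then $\mu^n$ satisfies the analogous inequality on $(\X^n, d_p)$ with the same constant $C$ (and the fact that $d_p^p$ is additive in coordinates makes the induction transparent). Then, given $A \subset \X^n$ with $\mu^n(A) \ge 1/2$ and setting $B = (A_{r,p})^c$, the supports of the conditional probabilities $\nu_A = \mu^n(\,\cdot\,\cap A)/\mu^n(A)$ and $\nu_B = \mu^n(\,\cdot\,\cap B)/\mu^n(B)$ lie at $d_p$-distance at least $r$, so $W_p(\nu_A, \nu_B) \ge r$. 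The triangle inequality for $W_p$ combined with the tensorized $T_p$ applied to both $\nu_A$ and $\nu_B$ against $\mu^n$ yields
\[
r \le (C\log 2)^{1/p} + \bigl(C\log(1/\mu^n(B))\bigr)^{1/p},
\]
which rearranges to the announced profile $\alpha(r) = \exp(-[r-r_o]_+^p/C)$ with $r_o = (C\log 2)^{1/p}$.

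For the converse direction, consider the empirical measure $L_N = \frac{1}{N}\sum_{i=1}^N \delta_{x_i}$ and the function $F_N : \X^N \to \R$ defined by $F_N(x) = W_p(L_N, \mu)$. The coordinate-by-coordinate coupling $\delta_{x_i} \mapsto \delta_{x_i'}$ shows that $W_p^p(L_N^x, L_N^{x'}) \le \frac{1}{N} d_p^p(x,x')$, so $F_N$ is $N^{-1/p}$-Lipschitz with respect to $d_p$. Applying $\mathbf{CI}_p^\infty(\alpha)$ to the median level set of $F_N$ under $\mu^N$ gives, writing $m_N$ for the median and setting $t = m_N + N^{-1/p} r$,
\[
-\frac{1}{N}\log \mu^N\bigl(W_p(L_N, \mu) \ge t\bigr) \ge \frac{1}{C}\bigl[t - m_N - N^{-1/p} r_o\bigr]_+^p.
\]
The law of large numbers for empirical measures in $W_p$ forces $m_N \to 0$, and the $N^{-1/p}r_o$ term vanishes. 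Sanov's theorem applied to the closed set $\{\eta : W_p(\eta, \mu) \ge t\}$ identifies the limit of the left-hand side as $\inf\{H(\eta|\mu) : W_p(\eta, \mu) \ge t\}$, yielding
\[
\inf\{H(\eta|\mu) : W_p(\eta, \mu) \ge t\} \ge \frac{t^p}{C} \qquad \forall t \ge 0,
\]
which is exactly the $T_p(C)$ inequality.

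The main obstacle is the rigorous execution of the Sanov step in an abstract polish space: the functional $\eta \mapsto W_p(\eta, \mu)$ fails to be continuous for the weak topology without moment control, and the Sanov upper bound requires exponential tightness. A standard workaround is to observe that already $\mathbf{CI}(\alpha)$ (the $n=1$ case) forces $\mu$ to have a fast-decaying tail, which supplies both the $W_p$-convergence $L_N \to \mu$ needed to identify $m_N \to 0$ and the exponential tightness required for Sanov; alternatively one may first reduce to compactly supported $\mu$ and conclude by approximation. The hypothesis $p \ge 2$ enters in the comparison between the $p$-th order concentration decay and the quadratic-or-stronger structure needed for the Sanov lower bound to be sharp.
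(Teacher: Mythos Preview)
The paper does not prove this theorem---it is quoted from \cite{Goz09}---but it does say that the original argument proceeds via Sanov's large deviations theorem, and your outline matches that strategy: Marton's tensorization/triangle-inequality argument for $T_p\Rightarrow\mathbf{CI}_p^\infty$, and the empirical-measure route for the converse. The forward direction is correct as written.

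In the converse you invoke the wrong half of Sanov. You apply it to the \emph{closed} set $\{W_p(\,\cdot\,,\mu)\ge t\}$ and say it ``identifies the limit'' of $-\frac{1}{N}\log\mu^N(W_p(L_N,\mu)\ge t)$; but your concentration estimate already furnishes a \emph{lower} bound on this quantity, and the Sanov upper bound for closed sets produces another lower bound---the two do not combine. What closes the argument is the large-deviation \emph{lower} bound, which applies to \emph{open} sets. Since $\eta\mapsto W_p(\eta,\mu)$ is lower semicontinuous for the weak topology, $G_{t'}=\{W_p(\,\cdot\,,\mu)>t'\}$ is open; for any $\nu\in G_{t'}$ the Sanov lower bound gives
\[
\liminf_{N\to\infty}\frac{1}{N}\log\mu^N\bigl(W_p(L_N,\mu)>t'\bigr)\ \ge\ -H(\nu|\mu),
\]
which together with your concentration upper bound yields $H(\nu|\mu)\ge (t')^p/C$, and letting $t'\uparrow W_p(\nu,\mu)$ gives $T_p(C)$. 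This correction also dissolves your ``main obstacle'': the Sanov lower bound on a Polish space requires no exponential tightness, and only lower semicontinuity of $W_p$ is used, never continuity. (The remaining point $m_N\to 0$ follows from $\int d(x_0,\cdot)^p\,d\mu<\infty$, which the concentration profile guarantees; and the restriction $p\ge 2$ plays no role in the proof---as the paper remarks just after the statement, it is there because $\mathbf{CI}_p^\infty$ with a nontrivial profile forces $\mu$ to be a Dirac mass when $p<2$.)
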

As we will see, the proofs of Theorem  \ref{main result} and \ref{thm:Goz09} are very different. Both make use of probability limit theorems, but not at the same scale: Theorem \ref{thm:Goz09} used Sanov's large deviations theorem, whereas Theorem \ref{main result} is an application of the central limit theorem. Moreover, contrary to what happens in Theorem \ref{main result} (see Item (2) of Remark \ref{rem-intro}), the global behavior of the concentration profile is used in Theorem \ref{thm:Goz09}.

In view of Theorems \ref{main result} and \ref{thm:Goz09}, it is natural to formulate the following general question:
\begin{itemize}
\item[$\mathbf{(Q)}$] Which functional inequality is equivalent to $\mathbf{CI}_p^\infty(\alpha)$ for a concentration profile of the form
\[
\alpha(r)=\exp(-a[r-r_o]^k_+),\qquad r\geq0,
\]where $a>0,r_o\geq0$ and $k>0$ ?
\end{itemize}

\begin{rem}
It is easy to see, using the Central Limit Theorem, that for $p\in [1,2)$ the only probability measures verifying $\mathbf{CI}_p^{\infty}(\alpha)$, for some $\alpha$ such that $\alpha(r_o)<1/2$ for at least one $r_o>0$, are Dirac masses. Thus the question $\mathbf{(Q)}$ is interesting only for $p\geq2.$
\end{rem}

To summarize, Theorem \ref{thm:Goz09} shows that the answer to $\mathbf{(Q)}$ is the $p$-Talagrand inequality for $k=p$ and $p\geq2$. Theorem \ref{main result} shows that the answer is the Poincar\'e inequality for $p=2$ and for $k\in (0,1]$. Moreover Point (4) of Remark \ref{rem-intro} above shows that for $p=2$, the question is interesting only for $k\in [1;2].$ The case $k\in (1;2)$ is still open.

Finally, we mention that some partial results are known for $p=\infty$. Indeed, in \cite{BH00}, Bobkov and Houdr\'e characterized the set of probability measures on $\R$ satisfying $\mathbf{CI}_\infty^\infty(\beta_{a_o,r_o})$, with $a_o \in [0,1/2)$, where $\beta_{a_o,r_o}$ is the minimal concentration profile defined by \eqref{minimal profile}. They showed that a probability measure $\mu$ belongs to this class if and only if the map $U_\mu$ defined by
\[
U_\mu(x)=F_\mu^{-1}\left(\frac{1}{1+e^{-x}}\right),\qquad x\in \R,
\]
where $F_\mu(x)=\mu((-\infty,x])$ and $F_\mu^{-1}(p)=\inf\{x\in \R ; F_\mu(x)\geq p\}$, $p\in (0,1)$, satisfies the following inequality on the interval where it is defined:
\[
|U_\mu(x)-U_\mu(y)|\leq a+b|x-y|,
\]
for some $a,b\geq0.$

Finally, we mention that, under additional assumptions, Theorem \ref{main result} extends to convex sets. 
More precisely, we will prove that, \textit{inter alia}, in geodesic spaces in which the distance function is convex (Busemann spaces),  a probability measure satisfies $\mathbf{CI}^\infty_2(\alpha)$ restricted to \emph{convex} sets, with a non trivial profile $\alpha$, if and only if it satisfies the Poincar\'e inequality restricted to \emph{convex} functions (see Section 6 for a precise statement).
This generalizes a previous result by Bobkov and G\"otze that was only valid for probability on the real line \cite[Theorem 4.2]{BG99}.

\subsection{Alternative formulation in terms of observable diameters} It is possible to give an alternative formulation of Theorem \ref{main result} and Theorem \ref{Gromov Milman} using the notion of observable diameter introduced by Gromov (\cite[Chapter 3.1/2]{Gromov-book}).
Recall that, if $(\X,d,\mu)$ is a metric space equipped with a probability measure and $t \in [0,1]$, the partial diameter of $(\X,d)$ is defined as the infimum of the diameters of all the subsets $A\subset \X$ satisfying $\mu(A)\geq 1-t.$ It is denoted by $\mathrm{Part\,Diam}(\X,d,\mu,t).$ If $f:\X\to\R$ is some $1$-Lipschitz function, we denote by $\mu_f \in \mathcal{P}(\R)$ the push forward of $\mu$ under $f$. Then, the \emph{observable diameter} of $(\X,d,\mu)$ is defined as follows
\[
\mathrm{Obs\,Diam}(\X,d,\mu,t)=\sup_{f\ 1-\text{Lip}} \mathrm{Part\,Diam}(\R,|\,\cdot\,|,\mu_f,t)\in \R^+\cup\{+\infty\} .
\]
We define accordingly the observable diameters of $(\X^n,d_2,\mu^n,t)$ for all $n\in \N^*.$

The observable diameters are related to concentration profiles by the following lemma (see \textit{e.g.}\ \cite[Lemma 2.22]{FS12}).
\begin{lem}\label{lem:FS12}If $\mu$ satisfies $\mathbf{CI}(\alpha)$, then 
\[
\mathrm{Obs\,Diam}(\X,d,\mu,2\alpha(r)) \leq 2r,
\]
for all $r\geq0$ such that $\alpha(r)\leq 1/2.$\\
Conversely, for all $t\in [0,1/2]$, for all $A\subset \X$, with $\mu(A)\geq1/2$, it holds
\[
\mu(A_{r(t)}) \geq 1-t
\]
with $r(t)=\mathrm{Obs\,Diam}(\X,d,\mu,t).$
\end{lem}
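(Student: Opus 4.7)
The plan is to exploit the classical correspondence between measure concentration and the concentration of $1$-Lipschitz push-forwards around medians. For the forward direction, I pick a $1$-Lipschitz function $f:\X\to\R$ and let $m$ denote a $\mu$-median of $f$. Both $\{f\leq m\}$ and $\{f\geq m\}$ have $\mu$-measure at least $1/2$, so $\mathbf{CI}(\alpha)$ bounds the measure of their $r$-enlargements below by $1-\alpha(r)$; since $f$ is $1$-Lipschitz these enlargements are contained in $\{f\leq m+r\}$ and $\{f\geq m-r\}$, so a union bound gives $\mu_f([m-r,m+r])\geq 1-2\alpha(r)$. As $[m-r,m+r]$ has diameter $2r$, this yields $\mathrm{Part\,Diam}(\R,|\,\cdot\,|,\mu_f,2\alpha(r))\leq 2r$; taking the supremum over all $1$-Lipschitz $f$ gives the first claim.

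For the converse, I fix $t\in[0,1/2]$ and $A\subset\X$ with $\mu(A)\geq 1/2$, and introduce the $1$-Lipschitz function $f(x)=d(x,A)$, which satisfies $\mu_f(\{0\})\geq\mu(A)\geq 1/2$. Setting $r=r(t)=\mathrm{Obs\,Diam}(\X,d,\mu,t)$, the definition of the observable diameter yields $\mathrm{Part\,Diam}(\R,|\,\cdot\,|,\mu_f,t)\leq r$, so for each $\varepsilon>0$ there exists $B\subset\R$ with $\mu_f(B)\geq 1-t$ and $\mathrm{diam}(B)\leq r+\varepsilon$. When $t<1/2$, the strict inequality $\mu_f(B)+\mu_f(\{0\})>1$ forces $\mu_f(B\cap\{0\})>0$ and hence $0\in B$; so $B\subset[-(r+\varepsilon),r+\varepsilon]$, and combined with $f\geq 0$ this gives
\[
\mu(A_{r+\varepsilon})=\mu_f\!\left([0,r+\varepsilon]\right)\geq\mu_f(B)\geq 1-t.
\]
Letting $\varepsilon\downarrow 0$ and using $A_r=\bigcap_{\varepsilon>0}A_{r+\varepsilon}$ together with continuity of $\mu$ from above along this decreasing family yields $\mu(A_r)\geq 1-t$. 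The endpoint $t=1/2$ is immediate since $A\subset A_r$ already has measure at least $1/2$.

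The only mildly technical point is the $\varepsilon$-approximation in the converse, required because the infimum defining the partial diameter need not be attained; this is disposed of by the right-continuity of $s\mapsto\mu(A_s)$ just invoked. Everything else is a direct unpacking of the definitions of $\mathbf{CI}(\alpha)$ and of the observable diameter, and I do not anticipate any serious obstacle.
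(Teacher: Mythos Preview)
Your argument is correct. The paper itself does not supply a proof of this lemma; it only records the statement and refers the reader to \cite[Lemma 2.22]{FS12}. Hence there is no in-paper proof to compare against, but your proof is the standard one: two-sided concentration of a $1$-Lipschitz function around its median for the forward direction, and application of the observable-diameter bound to the distance function $f=d(\,\cdot\,,A)$ for the converse, with the $\varepsilon$-approximation handling the fact that the infimum defining the partial diameter may not be attained.
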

The following corollary gives an interpretation of the Poincar\'e inequality in terms of the boundedness of the observable diameters of the sequence of metric probability spaces $(\X^n,d_2,\mu^n)_{n\in \N^*}.$

\begin{cor}\label{cor:obs diam} A probability measure $\mu$ on $(\X,d)$ satisfies the Poincar\'e inequality \eqref{Poincare} with the optimal constant $\lambda$ if and only if for some $t\in (0,1/2)$ $$r_\infty(t):=\sup_{n\in \N^*} \mathrm{Obs\,Diam}(\X^n,d_2,\mu^n,t)<\infty.$$
Moreover,
\[
\overline{\Phi}^{-1}(t)\leq r_\infty(t)\sqrt{\lambda} \leq a\log\left(\frac{b}{t}\right),\qquad \forall t\in (0,1/2)
\]
where $a>0$ and $b\geq1$ are some universal constants.

\end{cor}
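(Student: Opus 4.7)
The strategy is to chain the two main theorems of the paper (Theorem \ref{main result} and Theorem \ref{Gromov Milman}) with Lemma \ref{lem:FS12}, which is precisely the dictionary between concentration profiles on $(\X^n,d_2,\mu^n)$ and observable diameters of the same spaces. Thus the corollary should follow with essentially no extra work, and the two inequalities in the ``moreover'' part correspond to the two theorems, respectively.

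I would first prove the forward direction. Assume $\mu$ satisfies \eqref{Poincare} with optimal constant $\lambda>0$. By Theorem \ref{Gromov Milman} applied to $\mu$, we get $\mathbf{CI}_2^\infty(\alpha)$ with $\alpha(r)=b\exp(-a\sqrt{\lambda}\,r)$ for universal constants $a,b>0$. In particular, for every $n\geq 1$, the probability measure $\mu^n$ satisfies the one-dimensional concentration inequality $\mathbf{CI}(\alpha)$ on $(\X^n,d_2)$. Lemma \ref{lem:FS12} then yields $\mathrm{Obs\,Diam}(\X^n,d_2,\mu^n,2\alpha(r))\leq 2r$ for all $r$ such that $\alpha(r)\leq 1/2$. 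Given $t\in(0,1/2)$, I solve $2\alpha(r)=t$, which gives $r=\frac{1}{a\sqrt{\lambda}}\log(2b/t)$, and this $r$ indeed satisfies $\alpha(r)=t/2\leq 1/2$. Since the bound holds uniformly in $n$, taking the supremum gives
\[
r_\infty(t)\sqrt{\lambda}\leq \frac{2}{a}\log\!\left(\frac{2b}{t}\right),
\]
which is the upper bound in the corollary.

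For the converse direction, assume $r_\infty(t)<\infty$ for some $t\in(0,1/2)$. The second (converse) part of Lemma \ref{lem:FS12} applied to each space $(\X^n,d_2,\mu^n)$ with the parameter $t$ gives $\mu^n(A_{r_n(t),2})\geq 1-t$ for every $A\subset \X^n$ with $\mu^n(A)\geq 1/2$, where $r_n(t)=\mathrm{Obs\,Diam}(\X^n,d_2,\mu^n,t)\leq r_\infty(t)$. Monotonicity of the enlargement in the radius then gives $\mu^n(A_{r,2})\geq 1-t$ for every $r\geq r_\infty(t)$, while for $r<r_\infty(t)$ we only use the trivial bound $\mu^n(A_{r,2})\geq 1/2$. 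In other words, $\mu$ satisfies $\mathbf{CI}_2^\infty(\beta_{t,r_\infty(t)})$, where $\beta_{t,r_\infty(t)}$ is the minimal profile of \eqref{minimal profile}. Applying Theorem \ref{main result} (in the explicit form of item (1) of Remark \ref{rem-intro}) gives the Poincar\'e inequality for $\mu$ with constant $(\overline{\Phi}^{-1}(t)/r_\infty(t))^2$, and since the optimal Poincar\'e constant is at least as large, we conclude
\[
\overline{\Phi}^{-1}(t)\leq r_\infty(t)\sqrt{\lambda}.
\]

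The equivalence stated at the beginning of the corollary is then immediate: if $\lambda>0$, the forward direction shows $r_\infty(t)$ is finite for every $t\in(0,1/2)$; conversely, if $r_\infty(t)<\infty$ for some such $t$, the converse direction produces a strictly positive lower bound for the optimal Poincar\'e constant. There is no genuine obstacle in this corollary; the only small care needed is in the converse step, where one must pass from the observable diameter of each $\X^n$ to the uniform bound $r_\infty(t)$ and recognise the resulting dimension-free concentration as precisely the minimal profile to which Theorem \ref{main result} applies.
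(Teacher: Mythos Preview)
Your proof is correct and follows essentially the same route as the paper's own proof: both directions are obtained by chaining Theorem~\ref{Gromov Milman} (resp.\ Theorem~\ref{main result}) with the appropriate half of Lemma~\ref{lem:FS12}, and the converse is handled via the minimal profile $\beta_{t,r_\infty(t)}$ exactly as in Remark~\ref{rem-intro}(1). The only point the paper states slightly more explicitly is the bootstrap at the end: once Poincar\'e is established from a single $t_o$, the forward direction gives $r_\infty(t)<\infty$ for \emph{all} $t\in(0,1/2)$, so the lower bound $\overline{\Phi}^{-1}(t)\leq r_\infty(t)\sqrt{\lambda}$ extends to every such $t$; you allude to this in your last paragraph, and it is worth saying in one line.
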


%%%%%%%%%%%%%%%%%%%%%%%%%%%%%%%%%%%%%%%%%%%%%%%%%%%%%%%%%%%%%%%%%%%%%%%%%

\subsection{Tools.}
In this section, we briefly introduce the main tools that will be used in the proof of Theorem \ref{main result}: inf-convolution operators, related to both concentration and to Hamilton-Jacobi equations, and the Central Limit Theorem.

The first main tool in the proof of Theorem \ref{main result} is a new alternative formulation of concentration of measure  $\mathbf{CI}_p^\infty(\alpha)$   in terms of deviation inequalities for inf-convolution operators that was introduced in \cite{GRS11}. 
Recall that for all $t>0$, the infimum convolution operator $f\mapsto Q_t f$ is defined for all $f:\X^n \to \R\cup\{+\infty\}$ bounded from below as follows
\begin{equation}\label{operator}
Q_tf(x)=\inf_{y\in \X^n}\left\{ f(y) + \frac{1}{t^{p-1}} d^p_p(x,y)\right\},\qquad x\in \X^n
\end{equation}
(we should write $Q_t^{p,(n)}$, but we will omit, for simplicity, the superscripts $p$ and $(n)$ in the notation).

In the next proposition, we recall a result from \cite{GRS11}  that gives a new way to express concentration of measure (our first main tool).
\begin{prop}\label{prop-GRS11}
Let $\mu \in \mathcal{P}(\X)$; $\mu$ satisfies $\mathbf{CI}_p^\infty(\alpha)$ if and only if for all $n\in \N^*$ and for all measurable function $f:\X^n \to \R\cup\{+\infty\}$ bounded from below and such that $\mu^n(f=+\infty)<1/2$, it holds
\begin{equation}\label{eq-GRS11}
 \mu^n(Q_t f > m(f) +r)\leq \alpha(r^{1/p}t^{1-1/p}),\quad \forall r,t>0,
 \end{equation}
where $m(f)=\inf\{m\in\R;\mu^n(f\leq m) \geq 1/2\}.$
\end{prop}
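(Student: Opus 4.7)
The plan is to prove the equivalence by using the fact that the sublevel set $\{f\le m(f)\}$ has $\mu^n$-measure at least $1/2$, and that its $\ell_p$-enlargements can be compared to sublevel sets of $Q_tf$. The whole game is the elementary scaling identity $s^p=rt^{p-1}$, i.e.\ $s=r^{1/p}t^{1-1/p}$, which matches the concentration radius with the deviation bound.

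\smallskip

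For the direct implication, assume $\mathbf{CI}_p^\infty(\alpha)$ and let $f:\X^n\to\R\cup\{+\infty\}$ be bounded from below with $\mu^n(f=+\infty)<1/2$, so that $m(f)$ is finite. Set $A=\{f\le m(f)\}$; by definition of $m(f)$ one has $\mu^n(A)\ge 1/2$. The key geometric observation is the inclusion
\[
A_{s,p}\subset \{Q_tf\le m(f)+r\},\qquad s=r^{1/p}t^{1-1/p}.
\]
Indeed, if $d_p(x,A)<s+\varepsilon$, pick $y\in A$ with $d_p(x,y)<s+\varepsilon$; then by definition of $Q_tf$,
\[
Q_tf(x)\le f(y)+\tfrac{1}{t^{p-1}}d_p^p(x,y)\le m(f)+\tfrac{(s+\varepsilon)^p}{t^{p-1}},
\]
and letting $\varepsilon\to 0$ yields $Q_tf(x)\le m(f)+s^p/t^{p-1}=m(f)+r$. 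Applying $\mathbf{CI}_p^\infty(\alpha)$ to $A$ gives $\mu^n(A_{s,p}^c)\le \alpha(s)$, hence $\mu^n(Q_tf>m(f)+r)\le \alpha(r^{1/p}t^{1-1/p})$.

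\smallskip

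For the converse, assume \eqref{eq-GRS11} and let $A\subset\X^n$ be Borel with $\mu^n(A)\ge 1/2$. To apply \eqref{eq-GRS11} with $\mu^n(f=+\infty)<1/2$, first treat the case $\mu^n(A)>1/2$ and take $f=0$ on $A$ and $f=+\infty$ on $A^c$. Then $m(f)=0$, $\mu^n(f=+\infty)=1-\mu^n(A)<1/2$, and a direct computation gives $Q_tf(x)=\frac{1}{t^{p-1}}d_p^p(x,A)$. Therefore \eqref{eq-GRS11} reads
\[
\mu^n\!\bigl(d_p^p(x,A)>rt^{p-1}\bigr)\le \alpha(r^{1/p}t^{1-1/p}),
\]
which, with $s=r^{1/p}t^{1-1/p}$, is exactly $\mu^n(A_{s,p})\ge 1-\alpha(s)$. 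Since $r,t>0$ are arbitrary, $s$ ranges over $(0,\infty)$ and the conclusion follows. The borderline case $\mu^n(A)=1/2$ is handled by approximation: apply the above to the enlargements $A_{\varepsilon,p}$ (whose measure is $\ge 1/2$, and is strictly larger than $1/2$ for $\varepsilon>0$ small enough outside a degenerate case) and use $(A_{\varepsilon,p})_{s,p}\subset A_{s+\varepsilon,p}$, then let $\varepsilon\to 0^+$.

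\smallskip

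There is no substantial obstacle: the statement is essentially bookkeeping once one identifies the level-set description $\{Q_tf\le m(f)+r\}$ with the enlargement $A_{s,p}$. The one subtlety worth being careful about is the limiting case $\mu^n(A)=1/2$, where the naive choice of $f$ fails the constraint $\mu^n(f=+\infty)<1/2$; the approximation argument above bridges this gap.
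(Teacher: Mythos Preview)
Your proof is correct and essentially identical to the paper's: both directions use the set $A=\{f\le m(f)\}$ (respectively the function $i_A$ taking the value $0$ on $A$ and $+\infty$ outside) together with the scaling $s=r^{1/p}t^{1-1/p}$. You are in fact slightly more careful than the paper, which applies the hypothesis directly to $i_A$ without commenting on the borderline case $\mu^n(A)=1/2$ where the strict condition $\mu^n(f=+\infty)<1/2$ fails.
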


The second main tool is the well known fact that the function $u:(t,x) \mapsto Q_tf(x)$ is, in some weak sense,  solution of the Hamilton-Jacobi equation 
\[
\frac{\partial u}{\partial t} = -\frac{1}{4} |\nabla_x u|^2.
\]
This result is very classical on $\R^k$ (see e.g \cite{Evans}); extensions to metric spaces were proposed in \cite{LV07}, \cite{Ba12}, \cite{AGS12} or \cite{GRS12}. This will be discussed in the next section.

The third tool is the Central Limit Theorem for triangular arrays of independent random variables  (see \textit{e.g.}\ \cite[p. 530]{F71}).
\begin{thm}\label{TCL}  For each $n$, let $X_{1,n}, X_{2,n},\ldots ,X_{n,n}$ be independent real random variables and define $T_n= X_{1,n}+\ldots +X_{n,n}.$ Assume that 
$\E[T_n^2] =1$,  $\E[X_{k,n}]=0$ for all $k\in\{0,\ldots,n\}$, and that the following Lindeberg condition holds, for all $t>0$
\begin{eqnarray}\label{lindeberg}
\sum_{k=1}^n \E \left[X_{k,n}^2\1_{|X_{k,n}|>t}\right]\to 0\quad \hbox{ as } n\to +\infty.
\end{eqnarray}
Then the distribution of $T_n$ converges weakly to the standard normal law.
% or equivalently the Levy distance $L(F_n,\Phi)$, between the law of $T_n$ and the standard normal law, tends to 0:
%\[
%L(F_n,\Phi):=\inf\left\{\varepsilon>0,  F_{n}(x -\varepsilon)-\varepsilon\leq \Phi(x)\leq F_{n}(x+\varepsilon)+\varepsilon \hbox{ for all }x\in \R\right\} \underset{n\to+\infty}{\longrightarrow} 0 
%\]
%where $F_{n}(x)=\P(T_n\leq x)$ and  $\Phi(x)=1-\overline \Phi(x), x\in \R$.
\end{thm}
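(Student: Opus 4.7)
My approach is the classical one by characteristic functions: setting $\phi_{k,n}(t):=\E[e^{itX_{k,n}}]$ and $\sigma_{k,n}^2:=\E[X_{k,n}^2]$, so that $\sum_{k=1}^n \sigma_{k,n}^2 = 1$, it suffices by L\'evy's continuity theorem to show $\phi_{T_n}(t)=\prod_{k=1}^n\phi_{k,n}(t)\to e^{-t^2/2}$ for every $t\in\R$.

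First, I would derive from \eqref{lindeberg} the infinitesimality condition $\max_{1\le k\le n}\sigma_{k,n}^2 \to 0$: for each $\epsilon>0$, $\sigma_{k,n}^2\le \epsilon^2+\sum_j\E[X_{j,n}^2\1_{|X_{j,n}|>\epsilon}]$, so taking $\limsup_n$ and then $\epsilon\to 0$ gives the claim. Next, using $|e^{iy}-1-iy+y^2/2|\le \min(y^2,|y|^3/6)$ combined with $\E[X_{k,n}]=0$, I obtain
\[
\Bigl|\phi_{k,n}(t) - 1 + \frac{t^2}{2}\sigma_{k,n}^2\Bigr| \le \E\!\left[\min\!\Bigl(t^2 X_{k,n}^2, \frac{|t|^3}{6}|X_{k,n}|^3\Bigr)\right].
\]
Splitting the expectation over $\{|X_{k,n}|\le\epsilon\}$ (cubic bound) versus its complement (quadratic bound) and summing over $k$,
\[
\sum_{k=1}^n \Bigl|\phi_{k,n}(t) - 1 + \frac{t^2}{2}\sigma_{k,n}^2\Bigr| \le \frac{|t|^3\epsilon}{6} + t^2\sum_{k=1}^n\E[X_{k,n}^2 \1_{|X_{k,n}|>\epsilon}],
\]
so \eqref{lindeberg} and the arbitrariness of $\epsilon$ give $\sum_k(\phi_{k,n}(t)-1)\to -t^2/2$.

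Finally, I would pass from this sum to $\log\phi_{T_n}(t)=\sum_k\log\phi_{k,n}(t)$ via $|\log(1+z)-z|\le|z|^2$ for $|z|\le 1/2$. The bound $|\phi_{k,n}(t)-1|\le t^2\sigma_{k,n}^2/2$ (from the centered Taylor inequality $|e^{iy}-1-iy|\le y^2/2$) together with the first step yields $\max_k|\phi_{k,n}(t)-1|\to 0$, so for $n$ large the logarithm estimate applies uniformly and the total error is at most $\max_k|\phi_{k,n}(t)-1|\cdot\sum_k|\phi_{k,n}(t)-1|\le (t^2/2)\max_k|\phi_{k,n}(t)-1|\to 0$. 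Thus $\log\phi_{T_n}(t)\to -t^2/2$, and L\'evy's theorem concludes.

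The main technical obstacle is the Taylor estimate: the Lindeberg condition is precisely the minimal hypothesis allowing one to balance the two regimes of the remainder---a cubic term on small values (producing an arbitrary factor $\epsilon$) and a quadratic term on large values (controlled by the truncated second moment)---so that both vanish after summation over $k$. The remaining ingredients (the infinitesimality derivation, the product-to-sum passage via logarithms, and L\'evy's continuity theorem) amount to bookkeeping.
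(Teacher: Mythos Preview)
Your proof is correct and is the standard characteristic-function argument for the Lindeberg CLT. Note, however, that the paper does not supply its own proof of this theorem: it is quoted as a classical tool with a reference to Feller \cite[p.~530]{F71}, so there is nothing to compare against---your sketch is essentially the textbook proof that the citation points to.
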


%The third tool is the celebrated Berry-Esseen Inequality.

%\begin{thm}[Berry-Esseen]\label{BE}
%Let $(X_i)_{i\in \N^*}$ be an i.i.d. sequence of real random variables such that $\E[X_i]=0$, $\E[X_i^2] = \sigma^2$ and $\E[|X_i|^3]=\rho <\infty$. There exists a universal constant $\kappa>0$ such that, for all $n\in \N^*$,
%\[
%\sup_{x\in \R}\left|\P \left( \frac{X_1+\cdots+X_n}{\sqrt{n}\sigma} >x\right) - \overline{\Phi} (x)\right| \leq \kappa\frac{\rho}{\sigma^3 \sqrt{n}},
%\]
%where $\overline{\Phi}(x)=\frac{1}{\sqrt{2\pi}} \int_x^{+\infty} e^{-u^2/2}\,du,$ $x\in \R.$
%\end{thm}

We end this introduction with a short roadmap of the paper.
In Section 2 we make some comments on Theorem \ref{main result}. In particular, we compare Theorem \ref{main result} to a result by E. Milman on the Poincar\'e inequality in spaces with non-negative curvature and show, as an immediate consequence of our main result as well as E. Milman's result, that the celebrated KLS conjecture for isotropic log-concave probability measures can be reduced to some universal concentration inequalities (for isotropic
log-concave probability measures). In Section 3, we recall some properties of the infimum convolution operators that will be used in the proofs. Section 4 is dedicated to the proof of Theorem \ref{main result}
and Section 5 to the proof of Theorem \ref{Gromov Milman}, by means of the Herbst argument (the latter is somehow classical (see \textit{e.g.}\ \cite{ane,Ledoux-book}) but requires some care due to our general framework).  Finally Section 6 deals with the case of convex sets and the Poincar\'e inequality restricted to convex functions.

\medskip

\textbf{Acknowledgements.} The authors would like to thank Emanuel Milman for commenting the main result of this paper and for mentioning to them that the method of proof used by Talagrand in \cite{Tal91}, to prove Corollary \ref{Cor Tal}, could be extended to cover general situations.

%%%%%%%%%%%%%%%%%%%%%%%%%%%%%%%%%%%%%%%%%%%%%%%%%%%%%%%%%%%%%%%%%%%%%%%%
%%%%%%%%%%%%%%%%%%%%%%%%%%%%%%%%%%%%%%%%%%%%%%%%%%%%%%%%%%%%%%%%%%%%%%%%

\section{Comparison with other results}

In this section we collect some remarks and consequences of our main theorem.
First we shall compare our result to one of E. Milman, in Riemannian setting. Then, we state the celebrated KLS conjecture and give an equivalent formulation in terms of dimension free concentration property. Finally,
 we may comment on other type of dimension free concentration properties involving a different definition of enlargement.

\subsection{Dimension free concentration v.s non negative curvature.}
In Riemannian setting, Theorem \ref{main result} is reminiscent of the following recent result by E. Milman showing that under non-negative curvature the Poincar\'e constant of a probability measure can be expressed through very weak concentration properties of the measure \cite{Mil09a,Mil10}. 

We recall that the Minkowski content of a set $A \subset \X$ is defined as follows
\[
\mu^+(A) := \liminf_{r\to 0} \frac{\mu(A_r) - \mu(A)}{r}.
\]

\begin{thm}[Milman \cite{Mil10}]\label{Milman} Let $\mu(dx)=e^{-V(x)}\,dx$ be an absolutely continuous probability measure on a smooth complete separable Riemannian manifold $M$ equipped with its geodesic distance $d.$
Suppose that $V:M\to\R$ is a function of class $\mathcal{C}^2$ such that 
\[
\mathrm{Ric} + \mathrm{Hess}\,V \geq0,
\]
and that $\mu$ satisfies the following concentration of measure inequality
\[
\mu(A_r) \geq 1 - \alpha(r),\qquad \forall r\geq 0,
\]
with $\alpha:[0,\infty) \to [0,1/2]$ such that $\alpha(r_o)<1/2$, for some $r_o>0$. Then $\mu$ satisfies the following  Cheeger's inequality
\[
\mu^+(A)\geq D \min(\mu(A) ; 1-\mu(A)),\qquad \forall A \subset M,
\]
with 
\[
D=\sup\left\{ \frac{\Psi(\alpha(r))}{r} ; r>0 \text{ s.t } \alpha(r)<1/2\right\},
\]
where $\Psi:[0,1/2)$ is some universal function. 
\end{thm}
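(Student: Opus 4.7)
The plan is to upgrade the weak concentration hypothesis into the much stronger Cheeger-type isoperimetric inequality by exploiting the Bakry-\'Emery semigroup calculus, which is fully available because $\mathrm{Ric}+\mathrm{Hess}\,V\ge 0$ is exactly the curvature-dimension condition $\mathrm{CD}(0,\infty)$ for the symmetric diffusion semigroup $(P_t)_{t\ge 0}$ with generator $L=\Delta-\nabla V\cdot\nabla$. The cornerstone is the reverse Poincar\'e estimate $2t\,|\nabla P_t g|^2\le P_t g^2-(P_t g)^2$, a standard consequence of the CD gradient bound $|\nabla P_s g|\le P_s|\nabla g|$; applied to $g=\1_A$ it produces the Lipschitz regularization $\|\nabla P_t\1_A\|_\infty\le 1/(2\sqrt{2t})$. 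Following the semigroup method going back to Ledoux in the Gaussian setting, I would then apply the concentration hypothesis to the regularized function $f_t:=P_t\1_A$ and play it off against a surface-integral control on its time-dissipation.

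Fix a Borel set $A\subset M$ with $v:=\mu(A)\in(0,1/2]$ and write
\[
\delta_t\;:=\;\int_{A^c}f_t\,d\mu\;=\;v-\int_A f_t\,d\mu.
\]
An integration by parts using self-adjointness of $L$ and the distributional identity $\nabla\1_A=-n_A\,d\sigma_{\partial A}$ (with $n_A$ the outer normal to $A$ and $\sigma_{\partial A}$ the $\mu$-weighted surface measure, so $\sigma_{\partial A}(\partial A)=\mu^+(A)$), combined with the pointwise gradient bound above, yields the semigroup upper bound $\delta_t\le C_1\sqrt{t}\,\mu^+(A)$. On the other hand, the concentration hypothesis applied to the $1/(2\sqrt{2t})$-Lipschitz function $f_t$, whose mean equals $v$, forces, for every $r>0$ with $\alpha(r)<1/2$, the values of $f_t$ to stay within $\kappa r/\sqrt{t}$ of its median $m_t$ outside a set of $\mu$-measure at most $2\alpha(r)$. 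Combined with the constraints $\int f_t\,d\mu=v$ and $0\le f_t\le 1$, this gives a quantitative lower bound $m_t\ge v-C_2(\alpha(r))\,r/\sqrt{t}$, and hence
\[
\delta_t\;\ge\;\bigl(m_t-\kappa r/\sqrt{t}\bigr)_+\bigl(\tfrac12-\alpha(r)\bigr),
\]
since at least a fraction $\tfrac12-\alpha(r)$ of $A^c$ sees $f_t\ge m_t-\kappa r/\sqrt{t}$ (by inclusion-exclusion, using $\mu(A^c)\ge 1/2$).

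Equating the upper and lower bounds on $\delta_t$ and optimizing over $t$ by choosing $r/\sqrt{t}\asymp v\cdot(\tfrac12-\alpha(r))$ yields $\mu^+(A)\ge D\min(v,1-v)$ with $D$ of the claimed form (the case $v>1/2$ being symmetric via $\mu^+(A)=\mu^+(A^c)$). The main obstacle will be to pin down the universal form of the function $\Psi$: the bookkeeping linking the median $m_t$, the mean $v$, and the concentration error $\alpha(r)$ must produce a function depending on $\alpha(r)$ alone, uniformly in the manifold, in $V$, and in $v$; this is the delicate point, especially when $v$ is close to $1/2$ and the mean-median discrepancy of $f_t$ may fail to be small in a readily quantifiable way. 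Secondary technical points, such as justifying the surface-integral identity for arbitrary Borel $A$ of finite Minkowski content, are handled by standard smoothing and the co-area formula.
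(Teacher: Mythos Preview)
The paper does not prove this theorem. Theorem~\ref{Milman} is quoted from E.~Milman's work \cite{Mil10} purely for comparison with the paper's own Theorem~\ref{main result}; no argument for it appears anywhere in the text. So there is no ``paper's proof'' to match your proposal against.

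Regarding your sketch on its own merits: the semigroup strategy (regularize $\1_A$ by $P_t$, use the $\mathrm{CD}(0,\infty)$ reverse Poincar\'e bound to get $\|\nabla P_t\1_A\|_\infty\le 1/(2\sqrt{2t})$, bound the dissipation $\delta_t$ above by $C\sqrt t\,\mu^+(A)$ and below via concentration, then optimize in $t$) is indeed the Ledoux--Bakry--Ledoux mechanism, and the upper bound $\delta_t\le C\sqrt t\,\mu^+(A)$ is correct under the curvature hypothesis. The genuine gap is in the lower bound and the subsequent optimization. Carrying your estimates through, the mean--median control gives at best $m_t\ge v-\alpha(r)-\kappa r/\sqrt t$, hence $\delta_t\gtrsim (v-\alpha(r)-2\kappa r/\sqrt t)_+(\tfrac12-\alpha(r))$. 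Choosing $t$ so that $\kappa r/\sqrt t$ is a small multiple of $v$ and comparing with $\delta_t\le C\sqrt t\,\mu^+(A)$ produces
\[
\mu^+(A)\;\gtrsim\;\frac{v^2\,(\tfrac12-\alpha(r))}{r},
\]
i.e.\ a bound that scales like $v^2$, not like $v$. This is \emph{not} Cheeger's inequality: the resulting ``constant'' $D$ would depend on $\mu(A)$ and degenerate as $v\to 0$. In Milman's actual proof the missing ingredient is a separate structural fact, namely that under $\mathrm{Ric}+\mathrm{Hess}\,V\ge 0$ the isoperimetric profile $v\mapsto I_\mu(v)$ is concave on $(0,1)$; concavity lets one transfer a bound at a single value of $v$ (where the semigroup/concentration argument does bite) to a linear lower bound valid for all $v$. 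Your sketch lacks this step or any substitute for it, and without it the optimization cannot close to give a $v$-independent Cheeger constant.
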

We recall that Cheeger's inequality with the constant $D$ implies the Poincar\'e inequality \eqref{Poincare} with the constant $\lambda=D^2/4$ \cite{Cheeger,Mazya}. In our result the non-negative curvature assumption of Milman's result is replaced by the assumption that the concentration is dimension free. 
\begin{rem}
Notice that, if $M$ has non-negative Ricci curvature and $\mu (dx) = \frac{1}{|K|} \mathbf{1}_K(x)\,dx$ is the normalized restriction of the Riemmanian volume to a  geodesically convex set $K$, then E. Milman also obtains \cite{Mil11} that 
\[
D= \sup\left\{\frac{1-2\alpha(r)}{r} ; r>0\right\}.
\]
This bound is optimal (see \cite{Mil11}).
\end{rem}

\subsection{A remark on the KLS conjecture}
In this section, $\mathcal{X}=\R^k$ is always equipped with its standard Euclidean norm $|\,\cdot\,|$.

Let us recall the celebrated conjecture by Kannan, Lov\'asz and Simonovits \cite{KLS95}.
Recall first that a probability measure $\mu$ on $\R^k$ is isotropic if $\int x\,\mu(dx)=0$ and $\int x_ix_j \,\mu(dx) =\delta_{ij}$ for all $1\leq i,j\leq n.$ It is log-concave if it has a density of the form $e^{-V}$, where $V:\R^k\to\R\cup\{+\infty\}$ is a convex function.

\begin{conj}[Kannan-Lov\'asz-Simonovits \cite{KLS95}]\label{KLS}
There is a universal constant $D>0$ such that for all $k\in \N^*$, any log-concave and isotropic probability measure $\mu$ on $\R^k$ satisfies the following Cheeger inequality 
\[
\mu^+(A)\geq D\min(\mu(A) ; 1-\mu(A)),\qquad \forall A \subset \R^k.
\]
Equivalently, there is a universal constant $\lambda>0$ such that for all $k\in \N^*$, any log-concave and isotropic probability measure $\mu$ on $\R^k$ satisfies the following Poincar\'e inequality
\[
\lambda\mathrm{Var}_\mu(f)\leq  \int |\nabla f|^2\,d\mu,
\]
for all $f:\R^k \to \R$ Lipschitz.
\end{conj}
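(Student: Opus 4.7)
The plan is to use Theorem \ref{main result} as a dimension-reduction device and thereby recast Conjecture \ref{KLS} as the search for a single universal dimension-free concentration profile on the class of isotropic log-concave measures. By the equivalence of the Cheeger and Poincar\'e formulations recorded in the statement of Conjecture \ref{KLS}, it suffices to produce one universal $\lambda>0$ such that for every $k\in\N^*$ and every isotropic log-concave $\mu$ on $\R^k$, the Poincar\'e inequality \eqref{Poincare} holds with constant $\lambda$. Theorem \ref{main result} tells us that it is enough to exhibit one non-increasing $\alpha:\R^+\to\R^+$, independent of $k$ and of $\mu$, with $\alpha(r_o)<1/2$ for some $r_o>0$, such that every isotropic log-concave $\mu$ satisfies $\mathbf{CI}^\infty_2(\alpha)$. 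By Item (1) of Remark \ref{rem-intro}, it even suffices to exhibit universal $a_o\in [0,1/2)$ and $r_o>0$ with
\[
\mu^n(A_{r_o,2}) \geq 1-a_o \qquad \text{whenever } \mu^n(A)\geq 1/2,
\]
uniformly over $n\in\N^*$ and over the isotropic log-concave class.

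The reduction of this dimension-free statement to a single one-dimensional (in the $\mathbf{CI}(\alpha)$ sense) statement rests on the stability of the isotropic log-concave class under Cartesian products. If $\mu$ is isotropic log-concave on $\R^k$, then for every $n$ the product $\mu^n$ is log-concave on $(\R^k)^n\cong\R^{nk}$, with zero mean and identity covariance in the standard Euclidean structure, and is thus again isotropic log-concave. Moreover the $\ell_2$ product distance $d_2$ on $(\R^k)^n$ coincides with the Euclidean norm on $\R^{nk}$. Consequently, any universal one-dimensional half-enlargement estimate
\[
\nu(A_{r_o})\geq 1-a_o \quad \text{for every isotropic log-concave } \nu \text{ on every } \R^m \text{ with } \nu(A)\geq 1/2,
\]
applied to $\nu=\mu^n$, already delivers the required dimension-free inequality for $\mu$. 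So proving Conjecture \ref{KLS} is equivalent to producing universal $(a_o,r_o)$ controlling the half-enlargement of every isotropic log-concave probability measure, with no dependence on the ambient dimension, and then feeding the resulting profile into Theorem \ref{main result}.

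The main obstacle is precisely this last, apparently weaker, uniform concentration step: the existence of universal $(a_o,r_o)$ is essentially as strong as KLS itself, amounting to a universal Gromov observable-diameter bound over the whole isotropic log-concave class (cf.\ Corollary \ref{cor:obs diam}). All known progress on KLS (Payne-Weinberger / Kannan-Lov\'asz-Simonovits localization, Klartag's thin-shell inequalities, Eldan's stochastic localization, and the improvements by Lee-Vempala, Chen, and Klartag-Lehec) can be read as non-sharp upper bounds on the smallest admissible $r_o$ that still carry a polylogarithmic dependence on the dimension $k$. Removing this residual dimensional factor is exactly where current techniques stall, and it cannot be obtained from Theorem \ref{main result} alone: the theorem is a one-way street converting concentration into Poincar\'e, and provides no mechanism to bootstrap a dimensional concentration estimate into a dimension-free one without exploiting the structural features (log-concavity, isotropy) of the measure class. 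The most plausible route, which I would pursue, is to combine stochastic-localization arguments—which preserve log-concavity and allow controlled evolution of the isotropic constant—with the reduction above, invoking Theorem \ref{main result} only in the last step to translate the resulting universal profile into the sought universal Poincar\'e constant.
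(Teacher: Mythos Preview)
The statement you address is a \emph{conjecture} (KLS), not a theorem; the paper does not prove it, and neither does your proposal. What the paper does prove is Corollary~\ref{red KLS}: the equivalence between KLS and the existence of universal $(a_o,r_o)$ such that every isotropic log-concave $\nu$ (in any dimension) satisfies $\nu(A+r_oB_2)\geq 1-a_o$ whenever $\nu(A)\geq 1/2$. Your reduction argument---stability of the isotropic log-concave class under Cartesian products, followed by an appeal to Theorem~\ref{main result}---is precisely the paper's alternative proof of that corollary (the paper also notes that it follows immediately from E.~Milman's Theorem~\ref{Milman}). You correctly identify the remaining universal-concentration step as open and essentially equivalent to KLS itself. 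So there is no gap in your reasoning, but what you have produced is a reproduction of Corollary~\ref{red KLS} together with a candid acknowledgment that the conjecture is unresolved; your survey of stochastic-localization progress (Eldan, Lee--Vempala, Chen, Klartag--Lehec) goes beyond what the paper discusses.
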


According to E. Milman's Theorem \ref{Milman}, the above conjecture can be reduced to a statement about universal concentration inequalities for log-concave isotropic probabilities.

\begin{cor}\label{red KLS}
The KLS conjecture is equivalent to the following statement. There exists $r_o>0, a_o \in [0,1/2)$ such that for any $m\in \N^*$, any log-concave and isotropic probability $\nu$ on $\R^m$ satisfies 
\begin{equation}\label{KLS Milman}
\nu(A + r_o B_2) \geq 1-a_o,\qquad \forall A \subset \R^m \text{ s.t. } \nu(A)\geq1/2,
\end{equation}
where $B_2$ is the Euclidean unit ball of $\R^m.$
\end{cor}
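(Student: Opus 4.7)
The plan is to prove the two implications separately, in both cases by direct appeal to tools already established in the paper. The forward implication is essentially immediate from Theorem \ref{Gromov Milman}; the reverse implication exploits the dimension-free nature of the concentration hypothesis combined with Theorem \ref{main result}.

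For the direction ``KLS $\Rightarrow$ \eqref{KLS Milman}'', I assume the Poincar\'e form of the KLS conjecture with some universal constant $\lambda_{\mathrm{KLS}}>0$. Then for every $m\in\N^*$ and every isotropic log-concave $\nu$ on $\R^m$, Theorem \ref{Gromov Milman} yields the concentration profile $\alpha(r)=b\exp(-a\sqrt{\lambda_{\mathrm{KLS}}}\,r)$ with universal $a,b>0$. Choosing $r_o$ large enough so that $a_o:=b\exp(-a\sqrt{\lambda_{\mathrm{KLS}}}\,r_o)<1/2$ gives \eqref{KLS Milman} for this pair $(r_o,a_o)$.

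For the direction ``\eqref{KLS Milman} $\Rightarrow$ KLS'', I fix $\mu$ isotropic and log-concave on $\R^k$ and aim to produce a Poincar\'e constant depending only on $(r_o,a_o)$. The key observation is that, for every $n\in\N^*$, the product measure $\mu^{\otimes n}$ on $(\R^k)^n=\R^{nk}$ is again isotropic and log-concave (products of isotropic log-concave measures are isotropic log-concave). Moreover, the $\ell_2$ product distance $d_2$ on $(\R^k)^n$ coincides with the Euclidean norm on $\R^{nk}$, so that the Euclidean enlargement $A+r_o B_2^{nk}$ equals $A_{r_o,2}$. Applying the hypothesis \eqref{KLS Milman} to $\nu=\mu^{\otimes n}$ (with $m=nk$) therefore gives
\[
\mu^n(A_{r_o,2})\geq 1-a_o,\qquad \forall A\subset \R^{nk}\ \text{ with }\ \mu^n(A)\geq 1/2,
\]
valid for every $n$. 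This is exactly $\mathbf{CI}_2^\infty(\beta_{a_o,r_o})$ with the minimal profile $\beta_{a_o,r_o}$ of \eqref{minimal profile}. By Theorem \ref{main result} and item (1) of Remark \ref{rem-intro}, $\mu$ satisfies the Poincar\'e inequality with the universal constant
\[
\lambda=\left(\frac{\overline{\Phi}^{-1}(a_o)}{r_o}\right)^2,
\]
which depends only on $(r_o,a_o)$ and not on $k$ nor on $\mu$. Since the Poincar\'e and Cheeger formulations of the KLS conjecture are known to be equivalent for log-concave measures (with universal constants), this yields the conjecture.

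There is no real obstacle: the whole content of the corollary is the recognition that \emph{one-dimensional} (single-enlargement) concentration for log-concave isotropic measures in \emph{all} dimensions automatically upgrades, via tensorization of isotropicity and log-concavity, to the \emph{dimension-free} concentration hypothesis needed to feed into Theorem \ref{main result}. The harder work has already been done in the proof of Theorem \ref{main result}; here one only performs the reduction.
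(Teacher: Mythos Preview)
Your proof is correct and follows essentially the same approach as the paper's own proof: the forward direction via Theorem \ref{Gromov Milman}, and the reverse direction by observing that $\mu^n$ is again isotropic log-concave on $\R^{nk}$ so that the hypothesis \eqref{KLS Milman} yields $\mathbf{CI}_2^\infty(\beta_{a_o,r_o})$, after which Theorem \ref{main result} applies. You have spelled out a bit more detail (the identification of $d_2$ on $(\R^k)^n$ with the Euclidean norm on $\R^{nk}$, and the explicit choice of $r_o$ in the forward direction), but the argument is the same.
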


This corollary follows immediately from Theorem \ref{Milman}. Below, we propose an alternative proof based on our main result (Theorem \ref{main result}).

\proof[Proof of Corollary \ref{red KLS}]According to Theorem \ref{Gromov Milman}, it is clear that the KLS conjecture implies uniform exponential concentration estimates for isotropic log-concave probability measures.

Conversely, let $\mu$ be isotropic and log-concave on $\R^k$. For all $n\in \N^*$, the probability $\mu^n$ is still isotropic and log-concave on $\left(\R^k\right)^n$. So applying \eqref{KLS Milman} to $\nu=\mu^n$ on $\left(\R^k\right)^n$, for all $n\in \N^*$, we conclude that $\mu$ satisfies $\mathbf{CI}_2^\infty(\beta_{a_o,r_o})$, where the concentration profile $\beta_{a_o,r_o}$ is defined by \eqref{minimal profile}. According to Theorem \ref{main result}, we conclude that $\mu$ satisfies Poincar\'e inequality with the constant $\lambda= \left(\overline{\Phi}^{-1}(a_o)/r_o\right)^2$. Since this holds for any isotropic log-concave probability measure in any dimension, this ends the proof.
\endproof

\subsection{Euclidean v.s Talagrand type enlargements}
Theorem \ref{main result} improves a preceding result by the first author \cite{Goz09} where a stronger form of exponential dimension free concentration, introduced by Talagrand \cite{Tal91,Tal95}, was shown to be equivalent to a transport-entropy inequality which was known to be also equivalent to the Poincar\'e inequality.
These equivalences, together with our main result, will allow us to prove that the two
notions of exponential dimension free concentration properties are actually equivalent (see Theorem \ref{canicule} below).

In order to present these equivalences, we need some notation and definition.

Given  $n\in \N^*$ and $A \subset \X^n$, consider the following family of enlargements of $A$: 
\[
\widetilde{A}_{a,r} =\left\{x \in \X^n ; \exists y\in A \text{ s.t } \sum_{i=1}^n \theta(ad(x_i,y_i))\leq r\right\},\qquad a >0,\quad r\geq0
\]
where $\theta(t) = t^2,$ if $t\in [0,1]$ and $\theta(t)=2t-1$, if $t\geq1.$

In the next definition, we recall the dimension free concentration property introduced by Talagrand.

\begin{defi}
A probability measure $\mu$ on $\X$ is said to satisfy the Talagrand exponential type dimension free concentration inequality with constants $a,b\geq0$ if for all $n\in \N^*$, for all $A\subset \X^n$ with $\mu^n(A)\geq1/2$, it holds
\begin{equation}\label{concentration Talagrand}
\mu^n(\widetilde{A}_{a,r})\geq 1-be^{-r},\qquad \forall r\geq0.
\end{equation}
\end{defi}

\begin{rem} \label{froid}
Using elementary algebra, one can compare the Talagrand concentration inequality \eqref{concentration Talagrand}
with the dimension free concentration inequality \eqref{eq:CI} under investigation in this paper.
More precisely we may prove that the former is stronger than the latter.
Indeed, since $t\mapsto \theta(\sqrt{t})$ is concave and vanishes at $0$, it is thus sub-additive. In turn, the following inequality holds
\[
\sum_{i=1}^n\theta(ad(x_i,y_i)) \geq \theta\left(\sqrt{\sum_{i=1}^n a^2d^2(x_i,y_i)}\right)=\theta(ad_2(x,y)),\qquad \forall x,y\in \X^n.
\]
Therefore,
\[
\widetilde{A}_{a,\theta(ar)} \subset A_{r,2},
\]
and so, if $\mu$ satisfies the Talagrand concentration inequality \eqref{concentration Talagrand}, then it obviously verifies the dimension free concentration inequality with the profile $\alpha(u)=be^{-\theta(au)}\leq ebe^{-2au},$ $u\geq0.$
\end{rem}

The following theorem summarizes the known links between Talagrand exponential type dimension free concentration and the Poincar\'e inequality.
\begin{thm}\label{Gozlan 09} 
Let $\mu$ be a probability measure on $\X$. The following statements are equivalent 
\begin{enumerate}
\item $\mu$ satisfies the Poincar\'e inequality \eqref{Poincare} with a constant $\lambda>0$.
\item $\mu$ satisfies the Talagrand exponential type dimension free concentration inequality \eqref{concentration Talagrand} with constants $a,b>0$.
%: for all $n\in \N^*$, for all $A \subset \X^n$ such that $\mu^n(A)\geq1/2$,
%\[
% \mu^n \left( \widetilde{A}_{a,r}\right) \geq 1 - be^{-r},\qquad \forall r\geq0,
% \]
%for some constants $a,b>0$.

\item  $\mu$ satisfies the following transport-entropy inequality
\[
\inf_{(X,Y) \in \Pi(\mu,\nu)}\mathbb{E} \left( \theta(Cd(X,Y)) \right) \leq  H(\nu|\mu),\qquad \forall \nu \in \mathcal{P}(\X),
\]
for some constant $C>0$, (recall that $\Pi(\mu,\nu)$ and the relative entropy $H(\nu|\mu)$ are defined before Theorem \ref{thm:Goz09}).
%is the set of couplings $\pi$ between $\mu$ and $\nu$ and the relative entropy is defined as follows $H(\nu|\mu)=\int \log \frac{d\nu}{d\mu}\,d\nu$, when $\nu \ll \mu$ and $+\infty$ otherwise.
\end{enumerate}
Moreover the constants above are related as follows :\\
$(1)\Rightarrow (2)$ with $a=\kappa \sqrt{\lambda}$ and $b=1$, for some universal constant $\kappa.$\\
$(2)\Rightarrow (3)$ with $C=a.$\\
$(3)\Rightarrow (1)$ with $\lambda=2C^2.$
\end{thm}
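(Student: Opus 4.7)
The plan is to prove the circular chain $(1)\Rightarrow (2)\Rightarrow (3)\Rightarrow (1)$; each arrow will combine tools already present in the paper with classical facts. Tracking constants along the cycle gives back the Poincaré constant $\lambda$ up to a universal factor, which is the best one can hope for at this level of generality.

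For $(1)\Rightarrow (2)$, I would invoke Bobkov--Ledoux type self-improvement of the Poincaré inequality. Since \eqref{Poincare} tensorizes with the same constant $\lambda$ on the $\ell_2$-product $(\X^n, d_2, \mu^n)$, it suffices to produce Talagrand's two-scale exponential concentration at the level of a single factor and then lift it by the uniform control on $\mu^n$. Concretely, Poincaré implies the modified log-Sobolev / Bobkov--Ledoux inequality whose associated transport cost is precisely $\theta$, and via the inf-convolution reformulation of concentration recalled in Proposition \ref{prop-GRS11}, this yields an exponential deviation bound for $Q_t f$ on $\X^n$ uniform in $n$. Translating back to enlargements gives \eqref{concentration Talagrand} with $a=\kappa\sqrt{\lambda}$ and $b=1$ for a universal constant $\kappa$.

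The implication $(2)\Rightarrow (3)$ is the core content of \cite{Goz09} and proceeds in the same spirit as Theorem \ref{thm:Goz09}. One first reformulates \eqref{concentration Talagrand} in terms of an inf-convolution operator adapted to the cost $\theta$ (the analogue, for $\theta$, of the operator $Q_t$ appearing in \eqref{operator}), obtaining a uniform-in-$n$ exponential deviation inequality for $Q_t^\theta f - m(f)$ on $\X^n$. Applied to functions of empirical measures $\frac{1}{n}\sum \delta_{X_i}$ and combined with Sanov's large deviation theorem, this deviation bound passes to the dual Kantorovich functional, and Bobkov--Götze / Fenchel duality then converts it into the transport-entropy inequality with cost $\theta$ and constant $C=a$.

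For $(3)\Rightarrow (1)$, the standard linearization suffices. For a bounded Lipschitz $f$ with $\int f\,d\mu =0$, set $\nu_\varepsilon = (1+\varepsilon f)\,\mu$ for small $\varepsilon$; a Taylor expansion of $x\log x$ gives $H(\nu_\varepsilon|\mu) = \frac{\varepsilon^2}{2}\int f^2\,d\mu + O(\varepsilon^3)$. On the transport side, since $\theta(t)=t^2$ for $t\in[0,1]$, a coupling supported near the diagonal (built from a Brenier map of the form $\mathrm{id}+\varepsilon\nabla g$ in the Euclidean case, or a metric analogue obtained from the Hamilton--Jacobi flow discussed in the paper) produces a plan $\pi \in \Pi(\mu,\nu_\varepsilon)$ with $\int \theta(Cd(x,y))\,d\pi \leq C^2\varepsilon^2 \int |\nabla^- f|^2\,d\mu + O(\varepsilon^3)$; dividing by $\varepsilon^2$ and sending $\varepsilon\to 0$ yields \eqref{Poincare} with $\lambda=2C^2$. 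The main obstacle is the middle implication $(2)\Rightarrow (3)$: the other two steps are respectively tensorized Herbst and Taylor expansion, but extracting a transport-entropy inequality from a concentration statement genuinely requires the Sanov / inf-convolution duality developed in \cite{Goz09}, for which no elementary shortcut is available in the abstract metric framework used here.
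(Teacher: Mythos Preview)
Your sketches of $(1)\Rightarrow(2)$ and $(2)\Rightarrow(3)$ are in line with the references the paper cites (Bobkov--Ledoux for the first, \cite{Goz09} for the second). For $(2)\Rightarrow(3)$ the paper actually records a shortcut that avoids rerunning the Sanov argument: set $\tilde d(x,y)=\sqrt{\theta(ad(x,y))}$, check that $\tilde d$ is a genuine distance (since $\theta^{1/2}$ is subadditive), observe that \eqref{concentration Talagrand} becomes exactly $\mathbf{CI}_2^\infty$ with a Gaussian profile for the metric $\tilde d$, and apply Theorem~\ref{thm:Goz09} directly. This is cleaner than repeating the inf-convolution/large-deviation machinery for the cost $\theta$.

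Your argument for $(3)\Rightarrow(1)$, however, has a genuine gap. The transport-entropy inequality in (3) is an \emph{upper} bound on the optimal transport cost by the entropy, so after your Taylor expansion you have
\[
\inf_{\pi\in\Pi(\mu,\nu_\varepsilon)}\int \theta(Cd)\,d\pi \;\leq\; H(\nu_\varepsilon\mid\mu)\;=\;\tfrac{\varepsilon^2}{2}\int f^2\,d\mu + O(\varepsilon^3).
\]
Constructing a coupling ``near the diagonal'' yields a \emph{second upper bound} on the same left-hand side, namely $\int\theta(Cd)\,d\pi\leq C^2\varepsilon^2\int|\nabla^- f|^2\,d\mu+O(\varepsilon^3)$. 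Two upper bounds on the same quantity cannot be combined to produce $\mathrm{Var}_\mu(f)\leq \mathrm{const}\cdot\int|\nabla^- f|^2\,d\mu$; you would need a \emph{lower} bound on the transport cost in terms of $\mathrm{Var}_\mu(f)$. (Moreover, even in the Euclidean case, the Brenier map from $\mu$ to $(1+\varepsilon f)\mu$ is $\mathrm{id}+\varepsilon\nabla g$ with $-\mathrm{div}(\mu\nabla g)=f\mu$, so the displacement involves $|\nabla g|$, not $|\nabla f|$; in a general metric space there is no such map at all.)

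The correct linearization---which is essentially the Maurey/Otto--Villani argument the paper cites---works on the \emph{dual} (Bobkov--G\"otze) side: (3) is equivalent to $\int e^{Q^\theta_C f}\,d\mu\leq e^{\int f\,d\mu}$ for all bounded $f$, with $Q^\theta_C f(x)=\inf_y\{f(y)+\theta(Cd(x,y))\}$. Plugging $f=\varepsilon h$ with $\int h\,d\mu=0$, using that $\theta(t)=t^2$ for small $t$ and Proposition~\ref{prop:Qt} to expand $Q^\theta_C(\varepsilon h)=\varepsilon h-\tfrac{\varepsilon^2}{4C^2}|\nabla^- h|^2+o(\varepsilon^2)$, and comparing the $\varepsilon^2$ terms gives precisely $2C^2\,\mathrm{Var}_\mu(h)\leq \int|\nabla^- h|^2\,d\mu$.
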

Let us make some bibliographical comments about the different implications in Theorem \ref{Gozlan 09}. The implication $(1)\Rightarrow (2)$ is due to Bobkov and Ledoux \cite{BL97}, the implication $(2)\Rightarrow (3)$ is due to the first author \cite[Theorem 5.1]{Goz09}, and the implication $(3)\Rightarrow (1)$ is due to Maurey \cite{Ma91} or Otto-Villani \cite{OV00}. The equivalence between (1) and (3) was first proved by Bobkov, Gentil and Ledoux in \cite{BGL01}.

\begin{rem}
It is worth noting that the implication $(2)\Rightarrow (3)$ follows from Theorem \ref{thm:Goz09} for $p=2$ by a change of metric argument. Namely, suppose that $\mu$ satisfies the concentration property (2) of Theorem \ref{Gozlan 09}, for some $a>0$, and define $\tilde{d}(x,y)=\sqrt{\theta(ad(x,y))}$ for all $x,y \in \X.$ It is not difficult to check that the function $\theta^{1/2}$ is subadditive, and therefore $\tilde{d}$ defines a new distance on $\X$. The $\ell_2$ extension of $\tilde{d}$ to the product $\X^n$ is
\[
\tilde{d}_2(x,y)= \left[\sum_{i=1}^n \theta(ad(x_i,y_i))\right]^{1/2},\qquad x,y\in \X^n,
\]
and it holds 
\[
\widetilde{A}_{a,r} = \{ x\in \X^n ; \tilde{d}_2(x,A) \leq \sqrt{r}\},\qquad \forall A \subset \X^n.
\]
Therefore, statement (2) can be restated by saying that $\mu$ satisfies $\mathbf{CI}_2^\infty (\alpha)$ (with respect to the distance $\tilde{d}$) with the Gaussian concentration profile $\alpha(r)=be^{-r^2}.$ Applying Theorem \ref{thm:Goz09}, we conclude that $\mu$ satisfies the $2$-Talagrand transport entropy inequality with the constant $1$ with respect to the distance $\tilde{d}$, which is exactly (3) with $C=a$.
\end{rem}

An immediate consequence of Theorem \ref{main result} and of Bobkov-Ledoux Theorem $(1)\Rightarrow (2)$ above is the following result showing the equivalence between the two forms of dimension free exponential concentration.

\begin{thm} \label{canicule}
Let $\mu$ be a probability measure on $\X$. The following are equivalent.
\begin{enumerate}
\item The probability measure $\mu$ satisfies the Talagrand exponential type dimension free concentration inequality \eqref{concentration Talagrand} with constants $a$ and $b$.
\item The probability measure $\mu$ satisfies $\mathbf{CI}_2^\infty (\alpha)$ with a profile $\alpha(u)=b'e^{-a'u},$ $u\geq0$.
\end{enumerate}
Moreover, the constants are related as follows:
$(1)\Rightarrow (2)$ with $a'=2a$ and $b'=eb$, and $(2)\Rightarrow (1)$ with $a=\kappa a'/\sqrt{\log(2b')}$ (for some universal constant $\kappa$) and $b=1$.
\end{thm}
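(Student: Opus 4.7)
The statement is an equivalence between two forms of exponential dimension free concentration, and my plan is simply to chain together results already established in the paper rather than to introduce new machinery.

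For the direction $(1)\Rightarrow(2)$, I would quote Remark~\ref{froid} directly. There it is observed that sub-additivity of $t\mapsto \theta(\sqrt t)$ yields the inclusion $\widetilde A_{a,\theta(ar)}\subset A_{r,2}$, so that the Talagrand inequality~\eqref{concentration Talagrand} with constants $(a,b)$ implies $\mathbf{CI}_2^\infty(\alpha)$ with $\alpha(u)=be^{-\theta(au)}$. To extract a clean exponential profile I would use the elementary inequality $\theta(au)\geq 2au-1$, which on the quadratic piece $au\leq 1$ reduces to $(au-1)^2\geq 0$ and is an equality on the linear piece. This gives $\alpha(u)\leq eb\cdot e^{-2au}$, so $a'=2a$ and $b'=eb$, matching the stated constants.

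For the converse direction $(2)\Rightarrow(1)$ I would compose our main Theorem~\ref{main result} with the Bobkov--Ledoux implication $(1)\Rightarrow(2)$ of Theorem~\ref{Gozlan 09}. Assume $\mathbf{CI}_2^\infty(\alpha)$ with $\alpha(u)=b'e^{-a'u}$; one may suppose $b'\geq 1$, since otherwise the profile is dominated by $e^{-a'u}$. Theorem~\ref{main result} supplies the Poincar\'e inequality~\eqref{Poincare} with
\[
\sqrt\lambda\ \geq\ \frac{\overline\Phi^{-1}(\alpha(r))}{r}\qquad\text{for every }r>0\text{ with }\alpha(r)\leq 1/2.
\]
I would then specialise to the single test value $r_0=2\log(2b')/a'$, for which $\alpha(r_0)=1/(4b')\leq 1/4$. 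Plugging the resulting Poincar\'e inequality into the Bobkov--Ledoux implication produces Talagrand concentration~\eqref{concentration Talagrand} with $a=\kappa\sqrt\lambda$ and $b=1$.

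The only quantitative step in this plan, and the one I expect to be mildly technical, is a universal lower bound on $\overline\Phi^{-1}(1/(4b'))$ of the right shape. For this I would invoke the classical Mills-ratio tail estimate $\overline\Phi(x)\geq \frac{1}{\sqrt{2\pi}}\frac{x}{x^2+1}e^{-x^2/2}$ to derive an inequality of the form $\overline\Phi^{-1}(t)\geq c\sqrt{\log(1/t)}$ on $(0,1/4]$, and combine it with the elementary fact that $\log(4b')$ and $\log(2b')$ are of the same order on $b'\geq 1$. This gives $\sqrt\lambda\geq c'a'/\sqrt{\log(2b')}$, and a final application of Bobkov--Ledoux reproduces exactly the stated constants $a=\kappa a'/\sqrt{\log(2b')}$ and $b=1$. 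Every other ingredient is a transparent appeal to results already proved above.
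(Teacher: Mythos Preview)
Your strategy is exactly the paper's: quote Remark~\ref{froid} for $(1)\Rightarrow(2)$, and for $(2)\Rightarrow(1)$ feed Theorem~\ref{main result} into the Bobkov--Ledoux implication $(1)\Rightarrow(2)$ of Theorem~\ref{Gozlan 09}, using the single test value $r_0=2\log(2b')/a'$.

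There is, however, one genuine slip in the quantitative step. To obtain a \emph{lower} bound on $\overline\Phi^{-1}(t)$ you need an \emph{upper} bound on $\overline\Phi$, because $\overline\Phi$ is decreasing. The Mills-ratio inequality you quote,
\[
\overline\Phi(x)\ \geq\ \frac{1}{\sqrt{2\pi}}\frac{x}{x^2+1}e^{-x^2/2},
\]
goes the wrong way: it yields only $\overline\Phi^{-1}(t)\leq c\sqrt{\log(1/t)}$ for small $t$. The paper uses instead the elementary upper tail bound $\overline\Phi(x)\leq \tfrac12 e^{-x^2/2}$ for $x\geq 0$, which inverts to $\overline\Phi^{-1}(t)\geq \sqrt{-2\log(2t)}$ on $(0,1/2)$. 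Plugging $t=\alpha(r_0)=1/(4b')$ then gives $\sqrt\lambda\geq c\,a'/\sqrt{\log(2b')}$ directly, and the rest of your argument goes through unchanged. Replace the Mills-ratio estimate by this upper bound and your proof is complete and identical to the paper's.
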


We do not know if there exists a direct proof of the implication $(2) \Rightarrow (1)$.

\proof
We have already proved that (1) implies (2) (see Remark \ref{froid}). Let us prove the converse. According to Theorem \ref{main result} we conclude from (2) that $\mu$ satisfies the Poincar\'e inequality with a constant $\lambda \geq \left(\frac{\overline{\Phi}^{-1}(\alpha(u))}{u}\right)^{2},$ for all $u$ such that $\alpha(u)<1/2.$ A classical inequality gives $\overline{\Phi}(t)\leq \frac{1}{2}e^{-t^2/2},$ $t\geq0.$ Therefore, 
$\overline{\Phi}^{-1}(t)\geq 2\sqrt{-\log(2t)},$ for all $t\in (0,1/2)$. Hence, taking $u=2\log(2b')/a'$
(which guarantees that $\alpha(u)=1/(4b')$) yields to $\lambda \geq \frac{a'^2}{\log(2b')}$. According to the implication (1) $\Rightarrow$ (2) in Theorem \ref{Gozlan 09} we conclude that $\mu$ satisfies Talagrand concentration inequality \eqref{concentration Talagrand} with $a=\kappa a'/\sqrt{\log(2b')}.$
\endproof

%%%%%%%%%%%%%%%%%%%%%%%%%%%%%%%%%%%%%%%%%%%%%%%%%%%%%%%%%%%%%%%%%%%%%%%%%%%%%%%%%
%%%%%%%%%%%%%%%%%%%%%%%%%%%%%%%%%%%%%%%%%%%%%%%%%%%%%%%%%%%%%%%%%%%%%%%%%%%%%%%%%

\section{Some properties of the inf-convolution operators}

In this short (technical) section, we recall some properties of the inf-convolution operators related to Hamilton-Jacobi equations and to the concentration of measure, in the setting of metric spaces (recall that $(\X,d)$ is a complete separable metric space).

\subsection{Inf-convolution operators and Hamilton-Jacobi equations}

In this paragraph, we shall only consider  the case $p=2$. We do this restriction for simplicity and also since only that particular case will be used in the proof of Theorem \ref{main result} (in the next section). However we mention that most of the results of this section can be extended to any $p>1$. 

The following proposition collects two basic observations about the operators $Q_t$, $t>0.$  
\begin{prop}\label{prop:Qt} Let $h:\X\to \R$ be a Lipschitz function. Then
\begin{itemize}
\item[(i)] for all $x\in \X$, $Q_th(x) \to h(x)$, when $t\to 0^+$;
\item[(ii)] for all $\nu\in \mathcal{P}(\X)$, 
\begin{equation}\label{eq:HJ int}
\limsup_{t \to 0^+} \frac{1}{t}\int h(x)-Q_th(x)\,\nu(dx) \leq \frac{1}{4}\int |\nabla^-h(x)|^2\,\nu(dx).\end{equation}
\end{itemize}
\end{prop}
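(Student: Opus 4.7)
Let $L$ be the Lipschitz constant of $h$, and recall that in the setting of the proposition ($p=2$, $n=1$), $Q_t h(x)=\inf_{y\in\X}\{h(y)+\tfrac{1}{t}d^2(x,y)\}$.

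For part (i), the plan is to sandwich $Q_t h(x)$ between $h(x)$ (obtained by choosing $y=x$) and $h(x)-L^2 t/4$: inside the infimum, the Lipschitz bound gives $h(y)+d^2(x,y)/t\geq h(x)-L\,d(x,y)+d^2(x,y)/t$, and minimising the quadratic in $r=d(x,y)$ produces the stated lower bound. Both bounds together yield $Q_t h(x)\to h(x)$, and the $L^2/4$ estimate will also serve as the uniform majorant needed in part (ii).

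For part (ii), I would start from the duality identity
\[
\frac{h(x)-Q_t h(x)}{t}=\sup_{y\in\X}\Big\{\frac{h(x)-h(y)}{t}-\frac{d^2(x,y)}{t^2}\Big\}.
\]
Since $h(x)-h(y)\le [h(y)-h(x)]_-$ always, this supremum is bounded above by the same expression with $[h(y)-h(x)]_-$ in place of $h(x)-h(y)$; and since the supremum is $\geq 0$ (take $y=x$), the Lipschitz bound $[h(y)-h(x)]_-\le L\,d(x,y)$ makes it effectively restricted to $y$ with $d(x,y)\le Lt$ (beyond that radius the expression is strictly negative). On this restricted range, writing $a=[h(y)-h(x)]_-/d(y,x)$ and $b=d(y,x)/t$, the Young-type inequality $ab-b^2\le a^2/4$ yields
\[
\frac{h(x)-Q_t h(x)}{t}\le \frac{1}{4}\sup_{0<d(y,x)\le Lt}\Big(\frac{[h(y)-h(x)]_-}{d(y,x)}\Big)^2.
\]
Letting $t\to 0^+$, the very definition of $|\nabla^- h|(x)$ gives the pointwise estimate $\limsup_{t\to 0^+}(h(x)-Q_t h(x))/t\le |\nabla^- h|(x)^2/4$ (and trivially $0$ when $x$ is isolated).

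Finally, to pass from pointwise to the integrated statement \eqref{eq:HJ int}, I would invoke the reverse Fatou lemma, using the non-negativity of $h-Q_t h$ and the uniform dominating constant $L^2/4$ supplied by part (i). The only point requiring care is the localisation step in (ii) --- the explicit use of the constraint $d(x,y)\le Lt$ --- which is what forces $|\nabla^- h|(x)$ (rather than some larger averaged quantity) to appear in the bound; everything else is bookkeeping.
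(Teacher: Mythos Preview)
Your proposal is correct and follows essentially the same route as the paper: localise the infimum to a ball of radius $O(Lt)$ via the Lipschitz bound, use the elementary inequality $ab-b^{2}\le a^{2}/4$ to obtain the pointwise estimate $(h(x)-Q_th(x))/t\le \tfrac14\sup_{d(y,x)\le Lt}\big([h(y)-h(x)]_-/d(y,x)\big)^2$, take the $\limsup$ to recover $|\nabla^-h|^2/4$, and then integrate via reverse Fatou using the uniform bound $L^2/4$. The only cosmetic differences are that the paper uses radius $2Lt$ and writes $[h(x)-h(y)]_+$ in place of your $[h(y)-h(x)]_-$, and it extracts (i) as a byproduct of the chain of inequalities in (ii) rather than proving it first by your direct sandwich.
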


Before giving the proof of Proposition \ref{prop:Qt}, let us complete the picture by recalling the following theorem of \cite{AGS12} and \cite{GRS12bis} (improving preceding results of \cite{LV07} and \cite{Ba12}). This result will not be used in the sequel. 
\begin{thm} Let $h$ be a bounded function on a polish metric space $\X$. Then $(t,x)\mapsto Q_th(x)$ satisfies the following Hamilton-Jacobi (in)equation
\begin{equation}\label{HJ}
\frac{d}{dt_+} Q_th(x) \leq -\frac{1}{4}|\nabla Q_th|^2(x), \qquad \forall t>0,\ \forall x\in \X
\end{equation}
where $d/dt_+$ stands for the right derivative, and $|\nabla h|(x)=\limsup_{y\to x}\frac{|h(y)-h(x)|}{d(y,x)}.$
Moreover, if the space $\X$ is geodesic (\textit{i.e.}\ for all $x,y\in \X$ there exists at least one curve $(z_t)_{t\in [0,1]}$ such that $z_0=x$, $z_1=y$ and $d(z_s,z_t)=|t-s|d(x,y)$) then \eqref{HJ} holds with equality.
\end{thm}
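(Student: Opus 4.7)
My plan is to adapt the classical Hopf--Lax argument to the metric setting. The unifying idea is that a single near-minimizer $y^*$ of the infimum defining $Q_t h(x)$, chosen with $d(x,y^*)$ nearly maximal among near-minimizers, simultaneously controls both $\tfrac{d}{dt_+} Q_t h(x)$ and $|\nabla Q_t h|(x)$ through the geometric quantity $d(x,y^*)/t$.

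Fix $x\in\X$ and $t>0$. For $\epsilon>0$, pick $y^*=y^*_\epsilon\in\X$ with $h(y^*)+d^2(x,y^*)/t\le Q_t h(x)+\epsilon$, and with $d(x,y^*)$ within $\epsilon$ of the supremum of $d(x,\cdot)$ over all such $\epsilon$-near-minimizers. Using $y^*$ as a competitor in the infimum defining $Q_{t+s}h(x)$,
\[
Q_{t+s}h(x)-Q_t h(x)\le \epsilon+d^2(x,y^*)\Bigl[\tfrac{1}{t+s}-\tfrac{1}{t}\Bigr]=\epsilon-\frac{s\,d^2(x,y^*)}{t(t+s)},
\]
whence $\tfrac{d}{dt_+} Q_t h(x)\le -d^2(x,y^*)/t^2$ after letting $s\to 0^+$ with $\epsilon=o(s)$. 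The same competitor bounds $|\nabla^+ Q_t h|(x)$: for any $z\in\X$,
\[
Q_t h(z)-Q_t h(x)\le \epsilon+\tfrac{d^2(z,y^*)-d^2(x,y^*)}{t}\le \epsilon+\tfrac{d(x,z)[d(x,z)+2d(x,y^*)]}{t},
\]
giving $|\nabla^+ Q_t h|(x)\le 2d(x,y^*)/t$. The analogous bound on $|\nabla^- Q_t h|(x)$ uses near-minimizers $y_z$ of $Q_t h(z)$ at points $z$ near $x$ to produce $|\nabla^- Q_t h|(x)\le 2\limsup_{z\to x}d(z,y_z)/t$, and then reduces to showing $\limsup_{z\to x}d(z,y_z)\le d(x,y^*)$. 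This last step is the technical heart of the argument: the boundedness of $h$ supplies the coercive estimate $d^2(z,y_z)\le t[Q_t h(z)+o(1)-\inf h]$ confining the $y_z$ to a bounded set, and the continuity (indeed Lipschitzness) of $Q_t h$ forces any subsequential limit of $d(z_n,y_{z_n})$ to equal the distance from $x$ to some limit minimizer of $Q_t h(x)$, hence to be at most the near-maximal $d(x,y^*)$ by our choice of $y^*$. Combining the two estimates,
\[
\tfrac{d}{dt_+} Q_t h(x)\le -\frac{d^2(x,y^*)}{t^2}=-\tfrac14\Bigl(\tfrac{2d(x,y^*)}{t}\Bigr)^2\le -\tfrac14|\nabla Q_t h|^2(x),
\]
which is the HJ inequation.

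For equality in the geodesic case, I would pick $z$ on a minimizing geodesic from $x$ to $y^*$ at parameter $s/(t+s)$: this saturates the weighted-mean inequality $\frac{d^2(x,y^*)}{t+s}\le \frac{d^2(x,z)}{s}+\frac{d^2(z,y^*)}{t}$ and yields the semigroup identity $Q_{t+s}h(x)=Q_s(Q_t h)(x)$. Probing $|\nabla^- Q_t h|$ along the same geodesic toward $y^*$ produces the matching lower bound $|\nabla^- Q_t h|(x)\ge 2d(x,y^*)/t$, via $Q_t h(\gamma(\alpha))-Q_t h(x)\le -\alpha(2-\alpha)d^2(x,y^*)/t+\epsilon$ and sending the geodesic parameter $\alpha\to 0$; these together promote the inequation to an equation. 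The main obstacle throughout is the bound on $|\nabla^- Q_t h|(x)$: since the descending slope is defined as a $\limsup$ as the base point varies, one must track the behavior of near-minimizers $y_z$ at nearby $z$, and the boundedness of $h$ hypothesized in the theorem is precisely what supplies the uniform tightness needed to close this $\limsup$ argument.
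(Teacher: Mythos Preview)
The paper does not prove this theorem: it is stated as a recall of results from \cite{AGS12} and \cite{GRS12bis}, with the explicit remark that it will not be used in the sequel. Your sketch follows the standard Hopf--Lax route of those references, organizing everything around the quantity $D^+(x,t)$ (your $d(x,y^*)$): bound the right time-derivative above by $-[D^+(x,t)]^2/t^2$, bound $|\nabla Q_t h|(x)$ above by $2D^+(x,t)/t$, and combine. So there is no alternative approach in the paper to compare against; I will simply assess your argument.

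There is one genuine gap. In the step controlling $|\nabla^- Q_t h|(x)$ you write that boundedness of $h$ confines the near-minimizers $y_z$ to a bounded set and that ``any subsequential limit of $d(z_n,y_{z_n})$ equals the distance from $x$ to some \emph{limit minimizer}''. In a general Polish space, bounded sets need not be relatively compact, so the $y_{z_n}$ need not have any convergent subsequence, and there is no ``limit minimizer'' to point to. The correct argument avoids this entirely: use $y_z$ as a competitor at $x$ and the Lipschitz continuity of $Q_t h$ to see that $y_z$ is a $\delta_z$-near-minimizer \emph{at $x$} with $\delta_z\to 0$; hence $d(x,y_z)$ is bounded by the supremum of distances from $x$ to $\delta_z$-near-minimizers at $x$, and this supremum decreases to $D^+(x,t)$ as $\delta_z\to 0$. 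No convergence of the $y_z$ is needed. This is precisely the upper semicontinuity of $(x,t)\mapsto D^+(x,t)$ established in \cite{AGS12,GRS12bis}.

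A smaller point on the geodesic case: your geodesic probing correctly gives $|\nabla^- Q_t h|(x)\ge 2D^+(x,t)/t$, but the phrase ``these together promote the inequation to an equation'' hides the missing half, namely the \emph{lower} bound $\frac{d}{dt_+}Q_t h(x)\ge -\frac14|\nabla Q_t h|^2(x)$. This is where the semigroup identity $Q_{t+s}h=Q_s(Q_t h)$ is actually used: combine it with the elementary estimate $\limsup_{s\to 0^+}\frac{g(x)-Q_s g(x)}{s}\le \frac14|\nabla^- g|^2(x)$ (proved in the paper as Proposition~\ref{prop:Qt}) applied to $g=Q_t h$. You should make this step explicit rather than gesture at it.
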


Observe that, strangely, the two inequalities \eqref{eq:HJ int} and \eqref{HJ} goes in the opposite direction.
This suggests that, at $t=0$, there should be equality.

\proof[Proof of Proposition \ref{prop:Qt}]
\noindent
Let $L>0$ be a Lipschitz constant of $h$; since $Q_t h \leq h$ one has 
$$Q_th(x)=\inf_{y\in B(x, 2Lt)} \left\{h(y)+\frac{1}{t}d^2(x,y)\right\}.$$
(Namely, if $d(x,y)>2Lt$, it holds $h(y)-h(x)+\frac{1}{t}d^2(x,y)\geq \left(-L+\frac{1}{t}d(x,y)\right)d(x,y)>0$.)

Hence
\begin{align*}
0\leq\frac{h(x)-Q_th(x)}{t}&=\sup_{y\in B(x,2Lt)}\left\{\frac{h(x)-h(y)}{t} - \frac{d^2(x,y)}{t^2}\right\}\\
&\leq \sup_{y\in B(x,2Lt)}\left\{ \frac{[h(x)-h(y)]_+}{d(x,y)}\frac{d(x,y)}{t} - \frac{d^2(x,y)}{t^2}\right\}\\
&\leq \sup_{r \in \mathbb{R}} \left\{ \sup_{y\in B(x,2Lt)} \frac{[h(x)-h(y)]_+}{d(x,y)}r - r^2\right\}
=  \frac{1}{4}\sup_{y\in B(x,2Lt)} \frac{[h(x)-h(y)]_+^2}{d^2(x,y)}.
\end{align*}
We conclude from this that $0\leq (h-Q_th)/t\leq L^2/4$. This implies in particular that $Q_th\to h$ when $t\to0.$ Taking the $\limsup$ when $t\to0^+$ gives 
\begin{equation}\label{eq:HJ facile}
\limsup_{t\to 0^+}\frac{h(x)-Q_th(x)}{t} \leq \frac{1}{4}|\nabla^-h(x)|^2.
\end{equation} 
Inequality \eqref{eq:HJ int} follows from \eqref{eq:HJ facile} using Fatou's Lemma in its $\limsup$ version. The application of Fatou's Lemma is justified by the fact that the family of functions $\{(h-Q_th)/t\}_{t>0}$ is uniformly bounded. 
\endproof
\begin{rem}The proof of \eqref{eq:HJ facile} can also be found in \cite[Theorem 22.46]{Vi09}  (see also \cite[Proposition A.3]{GRS12bis}, \cite{LV07,Ba12,AGS12}).
\end{rem}

\subsection{Inf-convolution operators and concentration of measure}\label{link}

In this subsection we briefly recall the short proof of Proposition \ref{prop-GRS11} for the sake of completeness
 (Proposition that relates the infimum convolution operator and the concentration property $\mathbf{CI}_p^\infty(\alpha)$).

\proof Let $f:\X^n \to \R\cup\{+\infty\}$ be a function bounded from below with   $\mu^n(f=+\infty)<1/2$. By definition of $m(f)$, it holds $\mu^n(f\leq m(f)) \geq 1/2.$ Define $A=\{f \leq m(f)\}$. 
If $\mu$ satisfies the dimension free concentration property $\mathbf{CI}_p^\infty(\alpha)$, since by definition of $m(f)$, $\mu^n(A)\geq1/2$, one has  $\mu^n(\X^n \setminus A_{u,p}) \leq \alpha(u)$, for all $u\geq0.$ Then, observe that 
\[
Q_tf(x) \leq m(f) + \frac{1}{t^{p-1}}d^p_p(x,A),\qquad \forall x\in \X^n.
\]
Hence $\{Q_tf>m(f)+r\}\subset \{d_p(\,\cdot\,,A)>{r^{1/p}t^{1-1/p}}\} = \X\setminus A_{r^{1/p}t^{1-1/p},p}$, which proves \eqref{eq-GRS11}.

To prove the converse, take a Borel set $A \subset \X^n$ such that $\mu^n(A)\geq 1/2$ and consider the function $i_A$ equals to $0$ on $A$ and $+\infty$ on $A^c.$ For this function, $Q_ti_A=d_p^p(x,A)/t^{p-1}$ and one can choose $m(i_A)=0.$ Applying \eqref{eq-GRS11} gives the concentration property $\mathbf{CI}_p^\infty(\alpha)$.
\endproof

%%%%%%%%%%%%%%%%%%%%%%%%%%%%%%%%%%%%%%%%%%%%%%%%%%%%%%%%%%%%%%%%%%%%%%%%%%%%%%%%%%%%%%%%%%%%%%%%

\section{Poincar\'e inequality and concentration of measure}

This section is dedicated to the proof of our main result Theorem \ref{main result}, and to Corollary 
\ref{cor:obs diam}. Moreover, we shall explain how Theorem \ref{main result} can be used to improve
any non-trivial dimension concentration property to an exponential one.

\subsection{From dimension free concentration to the Poincar\'e inequality: proof of Theorem \ref{main result}}
\proof[Proof of Theorem \ref{main result}.]
Let $h:\X \to \R$ be a bounded Lipschitz function such that $\int h\,d\mu=0.$
For all $n\in \N^*$, define $f_n:\X^n\to \R^+$ by 
\[
f_n(x)=h(x_1)+\cdots+h(x_n),\qquad \forall x=(x_1,\ldots,x_n)\in \X^n.
\]
Our aim is to apply the Central Limit 
Theorem.
Applying \eqref{eq-GRS11} to $f_n$ with $t=1/\sqrt{n}$ and $r=\sqrt{n}u$, for some $u>0$, we easily arrive at
\begin{multline}\label{promo}
\mu^{n} \left( \frac{1}{\sqrt{n}\sigma_n}\sum_{i=1}^n \left[Q_{1/\sqrt{n}} h(x_i) -\mu (Q_{1/\sqrt{n}}h)\right] > \frac{1}{\sigma_n\sqrt{n}}m(f_n)+\frac{\sqrt{n}}{\sigma_n} \mu \left(h-Q_{1/\sqrt{n}}h \right) + \frac{u}{\sigma_n} \right)\\
\leq \alpha (\sqrt{u}),\end{multline}
where $\sigma_n^2 = \mathrm{Var}_{\mu} (Q_{1/\sqrt{n}} h)$ and $m(f_n)$ is a median of $f_n$ under $\mu^n$, that is to say any number $m\in \R$ such that $\mu^n(f\geq m)\geq1/2$ and $\mu^n(f_n\leq m)\geq 1/2.$

We deal with each term of \eqref{promo} separately. 
According to Point $(i)$ of Proposition \ref{prop:Qt}, we observe that $\sigma_n \to \sigma=\sqrt{\mathrm{Var}_\mu (h)}$, when $n$ goes to $\infty$
and according to Point $(ii)$ of Proposition \ref{prop:Qt}, that
\[
\limsup_{n\to +\infty}\sqrt{n}\mu \left(h-Q_{1/\sqrt{n}}h \right) \leq\frac{1}{4}\int |\nabla^- h|^2\,d\mu.
\]
On the other hand, let $m_n=m(f_n)/(\sqrt{n}\sigma)$. According to the Central Limit Theorem (Theorem \ref{TCL}) the law of the random variables $T_n=f_n/(\sqrt{n}\sigma)$ under $\mu^n$ converges weakly to the standard Gaussian. Since weak convergence implies the convergence of quantiles as soon as the limit distribution has a continuous repartition function (see for instance \cite[Lemma 21.2]{vdv98}), we have in particular, $m_n \to 0$ as $n\to\infty$.
%, we get, since $\overline \Phi $ is $(2\pi)^{-1/2}$-Lipschitz, 
%\begin{multline*}
%\frac{1}{2}-\delta_n\leq \mu^n\left(\frac{f_n}{\sqrt{n}\sigma} \geq m_n\right) - \delta_n \leq \overline{\Phi}(m_n-2\delta_n) 
% \leq 
%\overline{\Phi}(m_n+\delta_n)+3(2\pi)^{-1/2}\delta_n\\ \leq 
%\mu^n \left(\frac{f_n}{\sqrt{n}\sigma} >m_n\right) + (3(2\pi)^{-1/2}+1)\delta_n \leq 1/2 + (3(2\pi)^{-1/2}+1)\delta_n,
%\end{multline*}
%with $\delta_n=L(F_n,\Phi)\to 0$ as $n\to +\infty$, and $F_n(x)=\mu^n(f_n/(\sqrt{n}\sigma)\leq x)$.
%As a consequence $\overline{\Phi}(m_n-2\delta_n)\to1/2$, which implies that $m_n \to 0.$

Now, fix $\varepsilon>0$. According to the above observations,  \eqref{promo} yields, for any $u>0$ and any $n$ sufficiently large,
\begin{eqnarray}\label{vacances}
\qquad \quad \mu^{n} \left( \frac{1}{\sqrt{n}\sigma_n}\sum_{i=1}^n \left[Q_{1/\sqrt{n}} h(x_i) -\mu (Q_{1/\sqrt{n}}h)\right] > \frac{1+\varepsilon}{\sigma}\left( \varepsilon + \frac{1}{4}\int |\nabla^- h|^2\,d\mu  +u \right)  \right)
\leq \alpha (\sqrt{u}).
\end{eqnarray}
In order to apply (again) the Central Limit Theorem (Theorem \ref{TCL}), introduce the following random variables 
\[
\widetilde T_n(x)=\sum_{i=1}^n \frac{1}{\sqrt{n}\sigma_n} \left[Q_{1/\sqrt{n}} h(x_i) -\mu (Q_{1/\sqrt{n}}h)\right]
\]
 under $\mu^n$. Since $h$ is bounded $Q_{1/\sqrt{n}} h$ is uniformly bounded in $n$, and by Lebesgue theorem, as $n$ goes to $\infty$, we see that the Lindeberg condition \eqref{lindeberg} is verified:\[
\int \frac1{\sigma_n^2}(Q_{1/\sqrt{n}} h-\mu(Q_{1/\sqrt{n}} h))^2\1_{|Q_{1/\sqrt{n}} h-\mu(Q_{1/\sqrt{n}} h)|>t\sqrt n \sigma_n} d\mu \to0 \qquad \hbox{ as } n \to +\infty.
\]
Therefore, letting $n$ go to $+\infty$, and $\varepsilon$ to $0$, by continuity of $\overline \Phi$, the inequality \eqref{vacances} provides  
\[
\overline{\Phi} \left(\frac{\int  |\nabla^- h|^2\,d\mu}{4\sigma} + \frac{u}{\sigma}  \right ) \leq \alpha (\sqrt{u}), \qquad \forall u \geq 0.
\]
Let $u\geq 0$ be such that $\alpha (\sqrt{u}) < 1/2$ and $k(u) := \overline{\Phi}^{-1}(\alpha (\sqrt{u}))>0$. 
We easily get from the latter inequality that 
\[
k(u)\sqrt{\mathrm{Var}_\mu (h)} \leq u + \frac{1}{4}\int |\nabla^- h|^2\,d\mu. 
\]
Replacing $h$ by $s h$, $s >0$, and taking the infimum, we arrive at 
\[
k(u) \sqrt{\mathrm{Var}_\mu (h)} \leq \inf_{s>0}\left\{\frac{u}{s} +\frac{s}{4}  \int |\nabla^- h|^2\,d\mu \right\}=\sqrt{u \int  |\nabla^- h|^2\,d\mu}.
\]
Optimizing over $u$, one concludes that Poincar\'e inequality \eqref{Poincare} is satisfied with the constant $\lambda$ announced in Theorem \ref{main result}.

Now let $h:\X\to \R$ be an unbounded Lipschitz function. Consider the sequence of functions $h_n=(h\vee -n)\wedge n$, $n\in\N^*$ converging pointwise to $h$. For all $n\in\N^*$, $h_n$ is bounded and Lipschitz, and it is not difficult to check that 
\begin{equation}\label{eq:truncation}
|\nabla^-h_n|(x)= 0\ \text{ if }x\in\{h\leq -n\}\cup \{h>n\}\quad \text{and}\quad |\nabla^-h_n|(x)= |\nabla^-h_n|(x)\ \text{ if }x\in\{-n<h\leq n\}.
\end{equation}
In particular, the sequence $|\nabla^-h_n|$ converges monotonically to $|\nabla^-h|$. Applying Fatou's lemma and the monotone convergence theorem, we obtain
\[
\lambda\iint (h(x)-h(y))^2\,\mu(dx)\mu(dy)\leq \lambda\liminf_{n\to\infty}\mathrm{Var}_\mu(h_n)\leq \liminf_{n\to \infty}\int  |\nabla^-h_n|^2\,d\mu = \int  |\nabla^-h|^2\,d\mu, 
\]
which completes the proof of Theorem \ref{main result}.
\endproof

\subsection{Poincar\'e inequality and boundedness of observable diameters of product probability spaces}
In this section we prove Corollary \ref{cor:obs diam}.

\proof[Proof of Corollary \ref{cor:obs diam}]
Assume first that $\mu$ satisfies the Poincar\'e inequality \eqref{Poincare} with the optimal constant $\lambda$. Then according to Theorem \ref{Gromov Milman}, $\mu$ satisfies $\mathbf{CI}_2^\infty(\alpha)$ with the concentration profile $\alpha(r)=be^{-\sqrt{\lambda}r}$, where $a,b$ are universal constants ($b\geq 1/2$). According to the first part of Lemma \ref{lem:FS12} (applied to the metric probability space $(\X^n,d_2,\mu^n)$), it follows that for all $n\in \N^*$, $\mathrm{Obs\, Diam}(\X^n,d_2,\mu^n, t) \leq 2\frac{\log(4b/t)}{a\sqrt{\lambda}},$ for all $t\leq 1$
and thus 
\[
r_\infty(t)\sqrt{\lambda}\leq a'\log(b'/t),\qquad \forall t\leq 1
\] 
for some universal constants $a',b'.$ 

Conversely, assume that $0<r_\infty(t_o)<\infty$ for some $t_o\in (0,1/2).$ According to the second part of Lemma \ref{lem:FS12}, $\mu$ satisfies $\mathbf{CI}_2^\infty(\beta_{t_o,r_\infty(t_o)})$, where the minimal profiles $\beta$ are defined in \eqref{minimal profile}. According to Theorem \ref{main result}, it
follows that $\mu$ satisfies the Poincar\'e inequality with an optimal constant $\lambda>0$ such that
\[
\sqrt{\lambda}r_\infty(t_o) \geq \overline{\Phi}^{-1}(t_o).
\]
According to the first step, we conclude that $r_\infty(t)<\infty$ for all $t\leq 1$, and so the inequality above is true for all $t\in (0,1/2).$
\endproof

\subsection{Self improvement of dimension free concentration inequalities}\label{self improvement}
The next result shows that a non-trivial dimension free concentration inequality can always be upgraded into an inequality with an exponential decay. This observation goes back to Talagrand \cite[Proposition 5.1]{Tal91}. 
\begin{cor}\label{Cor Tal}
If $\mu$ satisfies $\mathbf{CI}_2^\infty(\alpha)$ with a profile $\alpha$ such that $\alpha(r_o)<1/2$ for some $r_o$, then it satisfies $\mathbf{CI}_2^\infty$ with an exponential concentration. More explicitly, it satisfies the dimension free concentration property with the profile $\tilde{\alpha}(r)=be^{-a\sqrt{\lambda}r}$, where $a,b$ are universal constants and 
\[
\sqrt{\lambda}= \sup \left\{\frac{\overline{\Phi}^{-1} \left(\alpha(r)\right)}{r}; r>0 \text{ s.t } \alpha (r) < 1/2  \right\}.
\]
\end{cor}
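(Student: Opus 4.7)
The plan is to chain together the two main theorems of the paper: first apply Theorem \ref{main result} to go from the concentration hypothesis to the Poincaré inequality with an explicit constant, and then apply Theorem \ref{Gromov Milman} (Gromov--Milman) to convert the Poincaré inequality back into an exponential dimension-free concentration inequality.

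More concretely, I would first observe that the assumption $\alpha(r_o) < 1/2$ for some $r_o > 0$ guarantees that the supremum
\[
\sqrt{\lambda}= \sup \left\{\frac{\overline{\Phi}^{-1} \left(\alpha(r)\right)}{r}; r>0 \text{ s.t } \alpha (r) < 1/2  \right\}
\]
is strictly positive (indeed the term corresponding to $r=r_o$ is strictly positive, since $\overline{\Phi}^{-1}$ is strictly positive on $(0,1/2)$). This is the content of item (1) of Remark \ref{rem-intro}. Theorem \ref{main result} then applies and shows that $\mu$ satisfies the Poincaré inequality \eqref{Poincare} with constant exactly $\lambda$.

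Next I would invoke Theorem \ref{Gromov Milman}: since $\mu$ has the Poincaré inequality with constant $\lambda > 0$, it automatically satisfies the dimension free concentration property with exponential profile
\[
\tilde{\alpha}(r) = b \exp(-a \sqrt{\lambda}\, r), \qquad r \geq 0,
\]
for the universal constants $a,b$ from that theorem. Together with the explicit value of $\sqrt{\lambda}$ extracted in the first step, this is exactly the conclusion of the corollary.

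I expect no serious obstacle here, since the work is entirely done by the two theorems being combined; the only point to be careful about is checking that the Poincaré constant delivered by Theorem \ref{main result} coincides with the $\lambda$ advertised in the statement of the corollary (it does, by definition) and that the hypothesis $\alpha(r_o) < 1/2$ is precisely what is needed to feed Theorem \ref{Gromov Milman} with a strictly positive constant $\lambda$.
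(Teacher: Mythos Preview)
Your proposal is correct and matches the paper's own argument exactly: the paper simply states that ``This result is an immediate corollary of Theorem \ref{main result} and Theorem \ref{Gromov Milman}.'' The only cosmetic discrepancy is that Theorem \ref{main result} writes the supremum over $r$ with $\alpha(r)\leq 1/2$ while the corollary writes it with $\alpha(r)<1/2$, but since $\overline{\Phi}^{-1}(1/2)=0$ these define the same $\lambda$.
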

This result is an immediate corollary of Theorem \ref{main result} and Theorem \ref{Gromov Milman}.

In \cite{Tal91} this result was stated and proved only for probability measures on $\R$. We thank E. Milman for mentioning to us that the argument was in fact more general. 
For the sake of completeness, we extend below Talagrand's argument in a very general abstract framework. For a future use, we only assume that the dimension free concentration property holds on a good subclass of sets. We refer to the proof of Proposition \ref{Gromov Milman convex} where this refinement will be used (the subclass of sets being the class of convex sets). 

\begin{prop}\label{Cor Tal 2}
Let $(\X,d)$ be a complete separable metric space, $p\geq 1$ and for all $n\in \N^*$ let $\mathcal{A}_n$ be a class of Borel sets in $\X^n$ satisfying the following conditions:
\begin{itemize}
\item[(i)] For all $n\in \N^*$ and $r\geq 0$, if $A\in \mathcal{A}_n$ then $A_{r,p}\in \mathcal{A}_n.$
\item[(ii)] If $A\in \mathcal{A}_m$, then $A^n\in \mathcal{A}_{nm}.$ 
\end{itemize}
Suppose that a Borel probability measure $\mu$ on $\X$ satisfies the following dimension free concentration property: there exists $r_o>0,a_o\in[0,1/2)$ such that for all $n\in \N^*$, 
\[
\mu^n(A_{r_o,p})\geq 1-a_o,\qquad \forall A \in \mathcal{A}_n \text{ s.t. } \mu^n(A)\geq 1/2.
\]
Then, for any $\gamma \in ( -\log(1-a_o)/\log(2), 1)$, there exists $c\in [1/2,1)$ depending only on $\gamma$ and $a_o$ such that for all $n\in \N^*$,
\[
\mu^n(A_{r,p}) \geq 1- \frac{1-c}{\gamma} \gamma^{r/r_o},\qquad \forall r\geq0,\qquad \forall A \in \mathcal{A}_n \text{ s.t. } \mu^n(A)\geq c.
\]
\end{prop}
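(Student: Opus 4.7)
The plan is Talagrand's tensorization self-improvement, adapted to the abstract class $\mathcal{A}_n$. The idea is to apply the threshold-$1/2$ hypothesis not to a single set but to a suitable Cartesian power: condition (ii) keeps $A^M$ inside $\mathcal{A}_{nM}$, and choosing $M$ so that $\mu^n(A)^M$ sits just above $1/2$ converts a near-$1$ measure into a near-$1/2$ measure to which the base assumption can be applied. The gain, transported back through the inclusion $(A^M)_{r_o,p} \subset (A_{r_o,p})^M$, turns out to be multiplicative in the deficit $1-\mu^n(A)$, and iterating then produces geometric decay.

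\medskip

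\noindent\textbf{Core step.} I would first prove that there exists $c \in [1/2, 1)$, depending only on $\gamma$ and $a_o$, such that for every $n$ and every $B \in \mathcal{A}_n$ with $\mu^n(B) \geq c$,
$$1 - \mu^n(B_{r_o,p}) \leq \gamma\bigl(1 - \mu^n(B)\bigr).$$
Writing $\epsilon = 1 - \mu^n(B) \leq 1-c \leq 1/2$, take $M = \lfloor \log 2/\log(1/(1-\epsilon))\rfloor$, which is $\geq 1$ precisely because $\epsilon \leq 1/2$ (this is what forces $c \geq 1/2$). Then $\mu^{nM}(B^M) = (1-\epsilon)^M \geq 1/2$, and $B^M \in \mathcal{A}_{nM}$ by (ii), so the hypothesis yields $\mu^{nM}((B^M)_{r_o,p}) \geq 1 - a_o$. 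The elementary inclusion $(B^M)_{r_o,p} \subset (B_{r_o,p})^M$, which follows at once from the definition of the $\ell_p$ product distance (each coordinate $d_p^n$-distance is bounded by the global $\ell_p$ one), gives $\mu^n(B_{r_o,p})^M \geq 1-a_o$, and hence
$$1 - \mu^n(B_{r_o,p}) \leq 1 - (1-a_o)^{1/M} \leq -\log(1-a_o)/M.$$
Because $\log(1/(1-\epsilon)) = \epsilon + O(\epsilon^2)$, one has $M\epsilon \to \log 2$ as $\epsilon \to 0^+$, so the quotient $\bigl(-\log(1-a_o)\bigr)/(M\epsilon)$ tends to $-\log(1-a_o)/\log 2$, which is strictly less than $\gamma$ by the hypothesis on $\gamma$. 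Choosing $c$ close enough to $1$ (depending only on $\gamma$ and $a_o$) then makes this quotient $\leq \gamma$ uniformly for $\epsilon \in (0, 1-c]$, proving the core estimate.

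\medskip

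\noindent\textbf{Iteration and interpolation.} With the core step in hand, the rest is routine. Given $A \in \mathcal{A}_n$ with $\mu^n(A) \geq c$, condition (i) ensures $A_{kr_o,p} \in \mathcal{A}_n$ for every $k \geq 0$, and because $\gamma < 1$ the deficits $\epsilon_k = 1 - \mu^n(A_{kr_o,p})$ remain $\leq 1-c$ at every step, so the core step applies recursively and yields $\epsilon_k \leq \gamma^k(1-c)$ by induction. For arbitrary $r \geq 0$, setting $k = \lfloor r/r_o\rfloor$ gives $A_{kr_o,p} \subset A_{r,p}$ and $\gamma^k \leq \gamma^{r/r_o-1} = \gamma^{r/r_o}/\gamma$, which produces the advertised bound $1 - \mu^n(A_{r,p}) \leq \frac{1-c}{\gamma}\gamma^{r/r_o}$. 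The main obstacle is the quantitative control inside the core step: the integer floor defining $M$ and the Taylor remainder in $\log(1/(1-\epsilon))$ must be handled uniformly for $\epsilon \in (0, 1-c]$, and this is precisely where the strict inequality $\gamma > -\log(1-a_o)/\log 2$ is exploited to fix an admissible $c$.
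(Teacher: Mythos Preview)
Your proposal is correct and follows essentially the same route as the paper's proof: both use the tensorization trick $(B^M)_{r_o,p}\subset (B_{r_o,p})^M$ with $M$ chosen so that $\mu^n(B)^M$ just exceeds $1/2$, extract the same asymptotic ratio $-\log(1-a_o)/\log 2$ for the deficit contraction, fix $c$ via the strict inequality $\gamma>-\log(1-a_o)/\log 2$, and then iterate and interpolate. The only cosmetic differences are that the paper first argues in $\mathcal{A}_1$ and lifts to $\mathcal{A}_n$ at the end, and bounds $1-(1-a_o)^{1/M}$ through the auxiliary function $\varphi(u)=\exp\bigl(\log(1-a_o)\log(1/u)/(\log 2-\log(1/u))\bigr)$ rather than via your cleaner inequality $1-(1-a_o)^{1/M}\leq -\log(1-a_o)/M$.
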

Note in particular that in the case $p=2$ (and $\mathcal{A}_n$ the class of all Borel sets), we recover the conclusion of Corollary \ref{Cor Tal} with slightly less accurate constants. 

\proof
Given $A\in \mathcal{A}_1$, it holds $\left(A^n\right)_{r_o,p}\subset \left(A_{r_o}\right)^n$ and, according to (i) and (ii), both sets belong to $\mathcal{A}_n.$ Therefore, if $\mu(A)\geq (1/2)^{1/n}$, it holds $\mu(A_{r_o}) \geq (1-a_o )^{1/n}.$
Now, let $A\in \mathcal{A}_1$ be such that $\mu(A)\geq1/2$ and let $n_A$ be the greatest integer $n\in \N^*$ such that
$\mu(A)\geq (1/2)^{1/n}.$ By definition of $n_A$, 
\[
\frac{\log(2)}{\log(1/\mu(A))}-1< n_A\leq \frac{\log(2)}{\log(1/\mu(A))}.
\]
According to what precedes, 
\[
\mu(A_{r_o}^c)\leq 1-(1-a_o )^{1/n_A}\leq 1-\exp\left(\frac{\log(1-a_o )\log(1/\mu(A))}{\log(2)-\log(1/\mu(A))}\right).
\]
The function $\varphi(u)=\exp\left(\frac{\log(1-a_o )\log(1/u)}{\log(2)-\log(1/u)}\right)$ satisfies
\[
\varphi(u)=1-\frac{\log(1-a_o )}{\log(2)}(u-1)+o(u-1),
\]
when $u\to 1.$ So $\frac{1-\varphi(\mu(A))}{1-\mu(A)} \to -\frac{\log(1-a_o )}{\log(2)}\in (0,1),$ when $\mu(A)\to1.$ Therefore, if $\gamma$ is any number in the interval $(-\frac{\log(1-a_o )}{\log(2)},1)$, there exists $c>1/2$ (depending only on $\gamma$) such that for all $A \in \mathcal{A}_1$ with $\mu(A)\geq c$ it holds
\[
\mu(A_{r_o}^c)\leq \gamma \mu(A^c).\]
Iterating (which is possible thanks to $(i)$ and the easy to check property $\left(A_{r_1,p}\right)_{r_2,p}\subset A_{r_1+r_2,p}$) yields
\[
\mu(A_{kr_o}^c)\leq \gamma^k \mu(A^c),\qquad \forall k\in \N^*.
\]
It follows easily that for all $u\geq0$, 
\[
\mu(A_u^c)\leq ((1-c)/\gamma) \gamma^{u/r_o}.
\]
Applying the argument above to the product measure $\mu^p$, $p\in \N^*$, gives the conclusion.
\endproof

%%%%%%%%%%%%%%%%%%%%%%%%%%%%%%%%%%%%%%%%%%%%%%%%%%%%%%%%%%%%%%%%%%%%%%%%%%%%%%%%%%%%%%%%%%%%%%%%%%%%

\section{ From Poincar\'e inequality to exponential concentration: proof of Theorem \ref{Gromov Milman}} \label{sec:gromov-milman}
In this section, we give a proof of Theorem \ref{Gromov Milman}. Its conclusion is very classical in, say, a Euclidean setting. But to deal with the general metric space framework requires some additional technical ingredients that we present now.

\subsection{Technical preparation}\label{sec:tec}

In order to regularize Lipschitz functions, we shall introduce an approximate sup-convolution operator. More precisely, for all $\varepsilon>0$, and for all function $f:\X^n\to \R\cup\{-\infty\}$, we define the (approximate) sup-convolution operator by
 \begin{equation} \label{defRt}
R_{\varepsilon} f(x):=\sup_{y\in \X^n}\left\{ f(y) -\sqrt{\varepsilon +d_2^2(x,y)} \right\},\qquad x\in \X^n .
 \end{equation}

The next lemma collects some useful properties about $R_\varepsilon$. 
 \begin{lem}\label{approx} Let $f:\X^n\to \R\cup\{-\infty\}$ be a function taking at least one finite value and $\varepsilon >0$. 
 Then 
 \begin{enumerate}[label={\bfseries (\roman*)}]
\item If $R_\varepsilon f(x_o)<\infty$ for some $x_o\in \X^n$, then $R_\varepsilon f$ is finite everywhere and is $1$-Lipschitz with respect to $d_2$. Moreover, it holds  
$$\displaystyle \sum_{i=1}^n|\nabla_i^- R_\varepsilon f(x)|^2\leq 1,\qquad \forall x\in \X^n.$$
\item If $f$ is $1$-Lipschitz, then for all $x\in \X^n$, 
$$f(x)-\sqrt \varepsilon\leq R_\varepsilon f(x)\leq f(x).$$
\item If $(\X,d)$ is a Banach space and $f$ is convex, then $R_\varepsilon f$ is also convex.
\end{enumerate}

\end{lem}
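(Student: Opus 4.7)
I would prove the three statements in the order (ii), (iii), (i), from most elementary to most subtle. For (ii), the upper bound uses the 1-Lipschitz hypothesis: $f(y) - \sqrt{\varepsilon + d_2^2(x,y)} \leq f(x) + d_2(x,y) - d_2(x,y) = f(x)$ for every $y$ (since $\sqrt{\varepsilon + r^2} \geq r$), whence $R_\varepsilon f(x) \leq f(x)$ after taking the supremum; the lower bound comes from substituting $y = x$ into the supremum. For (iii), the key trick is the change of variables $z := x - y$, valid in a Banach space, which rewrites
\[
R_\varepsilon f(x) = \sup_{z \in \X^n} \bigl\{ f(x-z) - \sqrt{\varepsilon + \|z\|_2^2}\, \bigr\}.
\]
For each fixed $z$, the bracketed function is just a translate of the convex $f$ shifted by a constant in $x$, hence convex in $x$; and a supremum of convex functions is convex, giving (iii).

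For (i), I would first observe that $t \mapsto \sqrt{\varepsilon + t^2}$ on $\R^+$ is 1-Lipschitz (its derivative $t/\sqrt{\varepsilon + t^2}$ lies in $[0,1)$), so $x \mapsto \sqrt{\varepsilon + d_2^2(x,y)}$ is 1-Lipschitz in $x$ with respect to $d_2$, uniformly in $y$; consequently each $\psi_y(x) := f(y) - \sqrt{\varepsilon + d_2^2(x,y)}$ is 1-Lipschitz in $x$ uniformly in $y$. A supremum of uniformly 1-Lipschitz functions is either identically $+\infty$ or 1-Lipschitz everywhere, and the hypothesis $R_\varepsilon f(x_o) < \infty$ rules out the former; this gives the Lipschitz conclusion.

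The delicate part is the gradient bound $\sum_i |\nabla^-_i R_\varepsilon f(x)|^2 \leq 1$. My strategy is to exploit an approximate maximizer: for $x$ fixed and $\eta > 0$, choose $y_\eta \in \X^n$ with $\psi_{y_\eta}(x) > R_\varepsilon f(x) - \eta$. For any perturbation $x'$ of $x$ in only the $i$-th coordinate, the bound $R_\varepsilon f(x') \geq \psi_{y_\eta}(x')$ combined with the definition of $y_\eta$ gives
\[
\bigl[R_\varepsilon f(x') - R_\varepsilon f(x)\bigr]_- \leq \eta + \bigl[\psi_{y_\eta}(x') - \psi_{y_\eta}(x)\bigr]_-.
\]
The identity $\sqrt{A} - \sqrt{B} = (A-B)/(\sqrt{A}+\sqrt{B})$ applied with $A = \varepsilon + d_2^2(x', y_\eta)$ and $B = \varepsilon + d_2^2(x, y_\eta)$, combined with $|d(x'_i, y_{\eta,i}) - d(x_i, y_{\eta,i})| \leq d(x_i, x'_i)$, controls the second term on the right. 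Dividing by $d(x_i, x'_i)$, coupling $\eta$ with $x'$ so that $\eta/d(x_i, x'_i) \to 0$ as $x'_i \to x_i$ (for instance $\eta = d(x_i, x'_i)^2$), and passing to the limit yields $|\nabla^-_i R_\varepsilon f(x)| \leq d(x_i, y_{\eta,i})/\sqrt{\varepsilon + d_2^2(x, y_\eta)}$. Squaring and summing over $i$, the upper bound collapses to $d_2^2(x, y_\eta)/(\varepsilon + d_2^2(x, y_\eta)) \leq 1$, as required. The main obstacle is making this argument coherent across all $n$ coordinates simultaneously, since the $y_\eta$ produced by the coupling depends on both the coordinate direction and the perturbation; this can be handled by a diagonal extraction, exploiting the sequential compactness of the normalized coordinate vector $(d(x_i, y_{\eta,i})/\sqrt{\varepsilon + d_2^2(x, y_\eta)})_{i=1}^n$ in the Euclidean unit ball, or by a uniform argument valid for all sufficiently small $\eta$.
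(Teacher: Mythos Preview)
Your treatment of (ii), (iii), and the $1$-Lipschitz part of (i) coincides with the paper's. For the gradient bound $\sum_i |\nabla_i^- R_\varepsilon f|^2 \leq 1$ your route and the paper's diverge, though both succeed.

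The paper avoids your coordination problem from the start: it fixes a perturbation $z=(z_1,\dots,z_n)$ with $z_i\neq x_i$ for every $i$ and chooses a \emph{single} approximate maximizer $\hat y=\hat y(x,z,\eta)$ with error $\eta\min_i d(x_i,z_i)$. With this common $\hat y$ it bounds the whole sum
\[
\sum_{i=1}^n \frac{[R_\varepsilon f(\bar x^i z_i)-R_\varepsilon f(x)]_-^2}{d(x_i,z_i)^2}
\]
directly, introducing an auxiliary Cauchy--Schwarz parameter $\delta>0$ and the observation $4u^2\theta'(u^2)^2\le 1$ (with $\theta(u)=\sqrt{\varepsilon+u}$) to close up to terms that vanish as $z\to x$, then $\eta\to 0$, then $\delta\to 0$. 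No compactness enters.

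Your approach instead bounds each difference quotient by the $i$-th component of the unit-ball vector $v_\eta=(d(x_i,y_{\eta,i})/\sqrt{\varepsilon+d_2^2(x,y_\eta)})_i$, then extracts a convergent subsequence $v_{\eta_k}\to v^*$ to conclude $|\nabla_i^- R_\varepsilon f(x)|\le v^*_i$ and sum. This is legitimate, but to carry it out you must pick, for each $i$, a sequence $z_i^{(k)}\to x_i$ \emph{realizing} the $\limsup$ defining $|\nabla_i^-|$, set $\eta_k=(\min_i d(x_i,z_i^{(k)}))^2$ so that one $y_{\eta_k}$ serves all coordinates at stage $k$, and only then extract. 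The sentence ``passing to the limit yields $|\nabla_i^- R_\varepsilon f(x)|\le d(x_i,y_{\eta,i})/\sqrt{\varepsilon+d_2^2(x,y_\eta)}$'' is not correct as stated, since the right side has no limit in general; the compactness step is not optional cleanup but the actual content. The paper's direct estimate on the summed difference quotients is more self-contained and sidesteps this subtlety.
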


\begin{proof}We fix $\varepsilon>0.$

Point $(iii)$ follows easily from the fact that $R_\varepsilon f$ is a supremum of convex functions, since by a simple change of variable
\[
R_\varepsilon f(x)=\sup_{z\in \X^n}\left\{ f(x-z) -\sqrt{\varepsilon +\|z\|_2^2} \right\},\qquad x\in \X^n.
\]

Point $(ii)$ is also easy. Indeed, the first inequality follows  by choosing $y=x$. For the second inequality, observe that, since $f$ is 1-Lipschitz,
\[
R_\varepsilon f(x)-f(x)=\sup_{y\in \X^n}\left\{ f(y)-f(x) -\sqrt{\varepsilon +d_2^2(x,y)} \right\}\leq \sup_{r\geq 0}\left\{r -\sqrt{\varepsilon +r^2}\right\}=0.
\]

Now we turn to the proof of Point $(i)$. 

The first part of the statement follows from the fact that $R_\varepsilon f$ is a supremum of $1$-Lipschitz functions. To prove the other part, we need to fix some notation.
For $x=(x_1,\dots,x_n) , z=(z_1,\dots,z_n) \in \X^n$ and $i \in \{1,2,\ldots,n\}$, 
we set $$\bar{x}^iz=(x_1,\ldots,x_{i-1},z_i,x_{i+1},\ldots,x_n).$$
Also, we set $\theta(u):=\sqrt{\varepsilon+u}$, $u \in \R$, so that
$R_\varepsilon f(x)=\sup_y \{f(y)-\theta(d_2^2(x,y))\}$ (observe that $\theta$ is concave). 

Fix $x=(x_1,\dots,x_n) \in \X^n$ and a parameter  $\eta \in (0,1)$ that will be chosen later on and consider $z=(z_1,\ldots, z_n)\in \X^n$ such that $z_i\neq x_i$ for all $i\in\{1,\ldots,n\}$. We assume that $R_\varepsilon f$ is everywhere finite.
Hence, since $\eta\min_{1\leq i\leq n} d(x_i,z_i)>0$, 
there exists $\hat y=\hat y(x,z,\eta)$ such that 
\[
R_\varepsilon f(x) \leq  f(\hat y) -\sqrt{\varepsilon +d_2^2(x,\hat y)} 
+ \eta \min_{1\leq i\leq n} d(x_i,z_i). 
\]
As a consequence, using that $\theta$ is concave, for all $1\leq i\leq n$, we have
\begin{align*}
[R_\varepsilon f(\bar x^i z_i)-R_\varepsilon f(x)]_-
&=
[R_\varepsilon f(x)-R_\varepsilon f(\bar x^i z_i)]_+ 
\leq 
\left[\theta(d_2^2(x^i z_i,\hat y))-\theta(d_2^2(x,\hat y))\right]_+ 
+\eta\min_{1\leq i\leq n} d(x_i,z_i)\\
& \leq 
\left[d_2^2(x^i z_i,\hat y) - d_2^2(x,\hat y) \right]_+ \theta'(d_2^2(x,\hat y))
+\eta\min_{1\leq i\leq n} d(x_i,z_i)\\
& =
\left[d(\hat y_i,z_i)-d(\hat y_i,x_i)\right]_+\left(d(\hat y_i,z_i)+d(\hat y_i,x_i)\right)
\theta'(d_2^2(x,\hat y))
+\eta\min_{1\leq i\leq n} d(x_i,z_i) \\
&\leq
2 d(z_i,x_i) d(\hat y_i,z_i)
\theta'(d_2^2(x,\hat y))
+\eta\min_{1\leq i\leq n} d(x_i,z_i)
\end{align*}
where, in the last line we used that the positive part $\left[d(\hat y_i,z_i)-d(\hat y_i,x_i)\right]_+$ guarantees that $d(\hat y_i,x_i) \leq d(\hat y_i,z_i)$ and the triangular inequality.

Using the Cauchy-Schwarz inequality, it follows for any $\delta>0$ that
\begin{align*}
\frac {[R_\varepsilon f(\bar x^i z_i)-R_\varepsilon f(x)]_-^2}{d^2(x_i,z_i)}
& \leq 
\left( 2 d(\hat y_i,z_i)
\theta'(d_2^2(x,\hat y))
+\eta \right) ^2  
 \leq
(1+\delta) 4 d^2(\hat y_i,z_i) \theta'(d_2^2(x,\hat y))^2 + \left(1+\frac{1}{\delta}\right)\eta^2 .
\end{align*}
Therefore, using the triangular and the Cauchy-Schwarz inequalities, we get for any $\delta>0$,
\begin{align*}
\sum_{i=1}^n \frac {[R_\varepsilon f(\bar x^i z_i)-R_\varepsilon f(x)]_-^2}{d^2(x_i,z_i)}
& \leq 
(1+\delta) 4 d_2^2(\hat y,z) \theta'(d_2^2(x,\hat y))^2 + n\left(1+\frac{1}{\delta}\right)\eta^2 \\
& \leq 
(1+\delta) 4 (d_2(\hat y,x) + d_2(x,z))^2 \theta'(d_2^2(x,\hat y))^2 + n\left(1+\frac{1}{\delta}\right)\eta^2 \\
& \leq 
(1+\delta)^2 4 d_2^2(\hat y,x)  \theta'(d_2^2(x,\hat y))^2 \\
& \quad
+ (1+\delta)\left(1+\frac{1}{\delta}\right) 4 d_2^2(x,z)^2 \theta'(d_2^2(x,\hat y))^2 
+ n\left(1+\frac{1}{\delta}\right)\eta^2 \\
& \leq (1+\delta)^2 + (1+\delta)\left(1+\frac{1}{\delta}\right) \frac {d_2^2(x,z)^2}{\varepsilon}
+ n\left(1+\frac{1}{\delta}\right)\eta^2,
\end{align*}
using that $4u^2\theta'(u^2)^2 \leq 1$ and $\theta'(u^2)^2\leq 1/(4\varepsilon).$
The expected result follows at once taking the limits
$z_i \to x_i$, and then $\eta \to 0$ and $\delta \to 0$.
This ends the proof of the lemma.
\end{proof}

The next ingredient in the proof of Theorem \ref{Gromov Milman} is the so-called Herbst argument
that we explain now. We recall the following notation: for any $f \colon \X^n \to \R$, set
\[
m(f)=\inf\left\{ m\in \R; \mu^n(f\leq m)\geq 1/2 \right\}.
\]
\begin{prop}\label{Herbst}
Assume that $\mu$ satisfies the Poincar\'e inequality \eqref{Poincare} with a constant $\lambda>0$. Then, there exist universal constants $a,b>0$ such that  then for all $n\in \N^*$, it holds
\[
\mu^n\left(f>m\left(  f\right) + \sqrt{\frac { 2} {\lambda }} +r\right)\leq b\exp\left(-a\sqrt{\lambda}r\right),\qquad \forall r\geq0
\]
for all function $f:\X^n\to \R$ such that 
\begin{equation}\label{eq:gradient-}
\sum_{i=1}^n |\nabla^-_if|^2(x)\leq 1,\qquad \forall x\in \X^n.
\end{equation}
The same conclusion holds if the function $f$ satisfies
\begin{equation}\label{eq:gradient+}
\sum_{i=1}^n |\nabla^+_if|^2(x)\leq 1,\qquad \forall x\in \X^n.
\end{equation}
\end{prop}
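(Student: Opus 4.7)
The plan is to run the classical Herbst argument in the metric-space setting: tensorize Poincar\'e on $\mu^n$, apply the result to $g = e^{sf/2}$ to produce a functional inequality on the Laplace transform $Z(s) := \int e^{sf}\,d\mu^n$, iterate, conclude by Markov, and finally replace the mean by the median via a variance bound.

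First I would tensorize. The Efron--Stein variance tensorization combined with the one-dimensional Poincar\'e inequality applied to each slice yields
\[
\lambda\,\mathrm{Var}_{\mu^n}(g) \leq \int \sum_{i=1}^n |\nabla_i^- g|^2\,d\mu^n
\]
for $g:\X^n\to\R$ that is Lipschitz in each coordinate. Applying this to $g = e^{sf/2}$ with $s>0$ and using both the pointwise identity $|\nabla_i^-(e^{sf/2})| = (s/2)\,e^{sf/2}\,|\nabla_i^- f|$ (valid because $u \mapsto e^{su/2}$ is nondecreasing) and the hypothesis $\sum_i |\nabla_i^- f|^2 \leq 1$, I obtain
\[
(\lambda - s^2/4)\,Z(s) \leq \lambda\,Z(s/2)^2, \qquad 0 < s < 2\sqrt{\lambda}.
\]
Setting $\phi := \log Z$, this reads $\phi(s) \leq 2\phi(s/2) + \log\!\bigl(\lambda/(\lambda - s^2/4)\bigr)$. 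Iterating $k$ times and sending $k \to \infty$, using that $2^k\phi(s/2^k)\to s\,\E f$ (legitimate since $\mathrm{Var}_{\mu^n}(f)\leq 1/\lambda < \infty$ by the tensorized Poincar\'e applied to $f$ itself, hence $f \in L^1(\mu^n)$), I reach
\[
\log Z(s) \leq s\,\E f + \sum_{j\geq 0} 2^j \log\!\Bigl(\frac{\lambda}{\lambda - s^2/(4\cdot 4^j)}\Bigr),
\]
with the series finite for $s < c_0\sqrt{\lambda}$ (some universal $c_0 \in (0,2)$).

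Fixing such an $s = \kappa\sqrt{\lambda}$, Markov's inequality yields $\mu^n(f > \E f + r)\leq b\,e^{-a\sqrt{\lambda}\,r}$ with universal $a,b > 0$. To pass from the mean to the median, Chebyshev applied to the bound $\mathrm{Var}_{\mu^n}(f)\leq 1/\lambda$ gives $|m(f)-\E f|\leq \sqrt{2/\lambda}$, so that $\{f > m(f) + \sqrt{2/\lambda} + r\}\subset\{f>\E f + r\}$, completing the proof. The $|\nabla^+|$ variant is handled identically, because $|\nabla_i^+(e^{sf/2})| = (s/2)\,e^{sf/2}\,|\nabla_i^+ f|$ still holds for $s>0$.

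The main technical difficulty is the tensorization step in an abstract polish space: the 1D Poincar\'e hypothesis applies to Lipschitz functions on $\X$, but the condition $\sum_i |\nabla_i^- f|^2 \leq 1$ alone does not ensure that the slices of $f$ are Lipschitz in a non-geodesic setting. I would handle this by first establishing the concentration inequality for functions that are 1-Lipschitz with respect to $d_2$ on $\X^n$, for which each slice is automatically 1-Lipschitz on $\X$, and then treating the general case through the regularization $R_\varepsilon f$ of Section~3, which by Lemma~\ref{approx}(i) is 1-Lipschitz with respect to $d_2$ and still satisfies $\sum_i |\nabla_i^- R_\varepsilon f|^2 \leq 1$. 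A truncation of $f$ together with a limiting argument as $\varepsilon \to 0^+$ should then transfer the exponential concentration back to $f$.
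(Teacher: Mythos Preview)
Your proposal is correct and follows essentially the same Herbst scheme as the paper: tensorize Poincar\'e, apply it to an exponential of $f$ to get a functional inequality for the Laplace transform, deduce an exponential moment bound, conclude by Markov, and compare mean and median via the variance bound $\mathrm{Var}_{\mu^n}(f)\leq 1/\lambda$. The only substantive difference is in how the functional inequality is exploited: you iterate the recursion $\phi(s)\leq 2\phi(s/2)+\log\bigl(\lambda/(\lambda-s^2/4)\bigr)$ directly, while the paper uses the convexity of $Z=\log\int e^{sf}$ (via $Z(2s)\geq Z(s)+sZ'(s)$) to turn the same inequality into the differential inequality $(Z(s)/s)'\leq -\log(1-s^2/\lambda)/s^2$, which is then integrated. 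Both routes are standard and equivalent in spirit.

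Your last paragraph is more elaborate than needed: the paper does not invoke $R_\varepsilon$ here but simply establishes the bound first for \emph{bounded} $f$ (so that $e^{sf}$ is bounded Lipschitz in each coordinate and Poincar\'e applies), and then passes to unbounded Lipschitz $f$ by truncation $f_n=(f\vee -n)\wedge n$, using that $|\nabla_i^- f_n|\leq |\nabla_i^- f|$. The $R_\varepsilon$ regularization is reserved for the next step (proof of Theorem~\ref{Gromov Milman}), where it is applied to the distance function.
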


\proof
It is well known that the Poincar\'e inequality tensorizes properly (see \textit{e.g.}\ \cite[proposition 5.6]{Ledoux-book}). Indeed, recall that 
  for all $n\in \N^*$ and for all product probability measure $\mu^n$
\begin{equation*}%
\lambda\mathrm{Var}_{\mu^n}(g)\leq \int\sum_{i=1}^n \mathrm{Var}_{\mu}(g_i)\,\mu^n(dx),
\end{equation*}
where  $g_i(x_i):=g(x_1,\ldots,x_{i-1},x_i,x_{i+1},\ldots,x_n)$, with $x_1,\ldots,x_{i-1},x_{i+1},\ldots,x_n$ fixed.
Therefore,  if $\mu$ satisfies \eqref{Poincare}, then the product probability measure $\mu^n$ satisfies
\begin{equation}\label{Poincare tens}
\lambda\mathrm{Var}_{\mu^n}(g)\leq \int\sum_{i=1}^n |\nabla_i^-g|^2(x)\,\mu^n(dx),
\end{equation}
for all function $g:\X^n \to \R$ that is Lipschitz in each coordinate.

Let $f:\X^n\to\R$ be bounded and such that \eqref{eq:gradient-} holds, and define $Z(s)=\log \int e^{s f}\,d\mu^n$, for all $s\geq0$. Applying \eqref{Poincare tens} to $g=e^{s f}$ (which is still bounded and Lipschitz) and using \eqref{eq:gradient-} yields easily to
\[
\lambda\left[\int e^{2s f}d\mu^n - \left(\int e^{s f}\,d\mu^n\right)^2\right] \leq s^2\int e^{2sf}\,d\mu^n .
\]
Thus
\[
\log(1-s^2/\lambda) + Z(2s) \leq 2Z(s),\qquad \forall 0\leq s \leq \sqrt{\lambda}.
\]
According to H\"older's inequality, the function $Z$ is convex. Therefore,
\[
Z(2s)\geq Z(s) + Z'(s)s.
\]
As a result,
\[
\log(1-s^2/\lambda) + Z'(s)s \leq Z(s),\qquad \forall 0\leq s \leq \sqrt{\lambda},
\]
and so
\[
\frac{d}{ds}\left( \frac{Z(s)}{s}\right) \leq -\frac{\log(1-s^2/\lambda)}{s^2},\qquad \forall 0< s \leq \sqrt{\lambda}.
\]
Since $Z(s)/s\to \int f\,d\mu^n$ when $s\to0$, we conclude that
\[
\int e^{s(f-\int f\,d\mu^n)}d\mu^n \leq \exp\left(\frac{s}{\sqrt{\lambda}} \int_0^{s/\sqrt{\lambda}} \frac{-\log(1-v^2)}{v^2}\,dv \right),\qquad \forall s\leq \sqrt{\lambda}.
\]
Taking $s=\sqrt{\lambda}/2$, we easily get, by the (exponential) Chebychev Inequality,
\begin{eqnarray}\label{hourra}
\mu^n\left(f-\int f\,d\mu^n >r\right)\leq be^{-\frac{\sqrt{\lambda}}{2}r},\qquad \forall r\geq0,
\end{eqnarray}
with $b=\exp\left( \frac{1}{2}\int_0^{1/2}\frac{-\log(1-v^2)}{v^2}\,dv\right).$

Now, since the probability measure  $\mu^n$  satisfies the Poincar\'e inequality with constant $\lambda>0$,    
\eqref{eq:gradient-} implies that 
\[\lambda\mathrm{Var}_{\mu^n}( f)\leq 1.\]
Therefore, by Markov's inequality, it holds, for all $\varepsilon>0$
 \begin{eqnarray*}
\mu^n\left( f\leq \int  f\,d\mu^n  -\sqrt{\frac { 2+\varepsilon} {\lambda}} \right) 
\leq \frac{\lambda}{2+\varepsilon} \,\mathrm{Var}_{\mu^n}( f) < \frac12.
\end{eqnarray*}
Hence, according to this definition of $m(f)$, it follows easily  that 
\[
m\left(  f\right)   \geq \int  f\,d\mu^n  -\sqrt{\frac { 2} {\lambda} }  .
\]
This inequality together with \eqref{hourra} provides the expected deviation inequality 
\begin{equation}\label{youhou}
\mu^n\left(f>m\left(  f\right) + \sqrt{\frac { 2} {\lambda} } +r\right)\leq be^{-\frac{\sqrt{\lambda}}{2}r},\qquad \forall r\geq0.
\end{equation}
Now suppose that $f:\X^n\to \R$ is Lipschitz but not bounded. Consider the function $f_n=(f\vee -n) \wedge n$, $n\in \N^*.$ 
Applying \eqref{eq:truncation} componentwise, we see that $|\nabla_i^-f_n|\leq |\nabla_i^-f|$ for all $i\in\{1,2,\ldots,n\}$. Therefore, $f_n$ satisfies \eqref{eq:gradient-}. Applying \eqref{youhou} to $f_n$, and letting $n$ go to $\infty$, one sees easily
that \eqref{youhou} still holds for the unbounded Lipschitz function $f$. 

Exactly the same proof works under the condition \eqref{eq:gradient+} (using the equivalent inequality \eqref{Poincare+})
\endproof

\subsection{Proof of Theorem \ref{Gromov Milman}}
Thanks to the previous section (Section \ref{sec:tec}), we are now in position to prove Theorem \ref{Gromov Milman}.
\proof[Proof of Theorem \ref{Gromov Milman}]
Let $A$ be a measurable subset of $\X^n$ of measure $\mu^n(A)\geq 1/2$ and define for all $\varepsilon>0$,
$f_{A,\varepsilon}(x):=\sqrt{\varepsilon+d_2^2(x,A)}$, $x \in \X^n$. By definition of $R_\varepsilon$ given in \eqref{defRt}, it holds $f_{A,\varepsilon}=-R_\varepsilon i_A$, where $i_A:\X^n\to \{-\infty; 0\}$ is the function defined by $i_A(x)=0$, if $x\in A$ and $i_A(x)=-\infty$ otherwise. 
According to Point $(ii)$ of Lemma \ref{approx}, we see that $f_{A,\varepsilon}$ satisfies Condition \eqref{eq:gradient+} of Proposition \ref{Herbst}. Hence, observing that $m(f_{A,\varepsilon})=\sqrt{\varepsilon}$, it follows from Proposition \ref{Herbst} that
\[
\mu^n\left(\sqrt{\varepsilon+d_2^2(x,A)}>\sqrt{\varepsilon} + \sqrt{\frac { 2} {\lambda} } +r\right)\leq b\exp\left(-a\sqrt{\lambda}r\right),\qquad \forall r\geq0 .
\]
Letting $\varepsilon$ go to $0$ yields (after a change of variable) to,
 \begin{eqnarray*}
1-\mu^n(A_{r,2}) \leq b' \exp(-a\sqrt{\lambda} r),\quad \forall r\geq 0,
\end{eqnarray*}
with $b'=\max(b,1/2)e^{\sqrt 2 a}$. The proof of Theorem \ref{Gromov Milman} is complete.
\endproof

\section{Poincar\'e inequality for convex functions and dimension free convex concentration property.}

In this final section, we investigate the links between the dimension free concentration property  $\mathbf{CI}^\infty_2(\alpha)$ restricted to convex sets, and the Poincar\'e inequality restricted to the class of convex functions (see below for a precise definition)

\subsection{Convexity and convex concentration on geodesic spaces}
To deal with convexity properties, we shall assume throughout the section that the metric space $(\X,d)$ is  geodesic. This means that any two points $x$ and $y $ of $\X$ can be connected  by  at least one  constant-speed continuous curve in $\X$: \textit{i.e.}\ for all $x,y \in \X$, there exists $(x_t)_{t\in[0,1]}$ in $\X$ satisfying $x_0=x$, $x_1=y$, and for all $s,t\in[0,1]$, $d(x_s,x_t)=|t-s| d(x_0,x_1)$. Such a curve is called a 
(constant-speed) geodesic joining $x$ to $y$.

\begin{defi} Let $(X,d)$ be a geodesic space.
\begin{enumerate}[label={\bfseries (\roman*)}]
\item A  set $A\subset \X$ is said to be convex if for all  $x_0, x_1 \in A$ and all geodesics $(x_t)_{t\in[0,1]}$ joining $x_0$ to $x_1$, it holds $x_t\in A$, $\forall t \in [0,1]$.
\item A function $f:\X\to \R\cup\{+\infty\}$ is said to be convex if for all geodesics $(x_t)_{t\in[0,1]}$ in $\X$,
\[f(x_t)\leq (1-t)f(x_0) +t f(x_1).\]
\end{enumerate}
\end{defi}

Accordingly, one will say that $\mu \in \mathcal{P}(\X)$ satisfies the dimension free convex concentration property with the concentration profile $\alpha$ and with respect to the $\ell_2$ product structure (in short $\mathbf{CCI}_2^\infty(\alpha)$), if for all convex subset $A\subset \X^n$, with $\mu^n(A)\geq 1/2,$ 
\begin{equation}\label{eq:CCI}
\mu^n(A_{r,2})\geq 1-\alpha(r),\qquad \forall r\geq 0.
\end{equation}

As for $\mathbf{CI}_2^\infty(\alpha)$  in Proposition \ref{prop-GRS11}, the convex concentration property can be characterized using the inf-convolution operator $Q_t$.
%(we omit the proof that is a straight forward adaptation of the one of Proposition \ref{prop-GRS11}). 
\begin{prop}\label{concconvfunc}
Let $\mu \in \mathcal{P}(\X)$; $\mu$ satisfies $\mathbf{CCI}_2^\infty(\alpha)$ if and only if for all $n\in \N^*$ and for all convex  function $f:\X^n \to \R\cup\{+\infty\}$ bounded from below and such that $\mu^n(f=+\infty)<1/2$, it holds
\begin{equation}\label{eq-GRS11convex}
 \mu^n(Q_t f > m(f) +r)\leq \alpha(\sqrt{tr}),\quad \forall r,t>0,
 \end{equation}
where $m(f)=\inf\{m\in\R;\mu^n(f\leq m(f)) \geq 1/2\}.$
\end{prop}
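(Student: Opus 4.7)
My plan is to essentially run the proof of Proposition \ref{prop-GRS11} verbatim, but with an eye on preserving convexity at every step. Two elementary observations make the dictionary work: (a) the sublevel sets $\{f\le c\}$ of a convex function $f\colon\X^n\to\R\cup\{+\infty\}$ are convex in the geodesic space $(\X^n,d_2)$ (with componentwise geodesics, which are indeed geodesics for $d_2$); and (b) the ``indicator'' $i_A(x):=0$ on $A$ and $+\infty$ on $A^c$ is a convex function on $\X^n$ whenever $A\subset\X^n$ is convex, since along any geodesic the inequality $i_A(x_t)\le (1-t)i_A(x_0)+t\, i_A(x_1)$ is trivial when the right-hand side is $+\infty$ and reduces to $0\le 0$ when both endpoints lie in $A$ (because then so does $x_t$).

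For the direct implication, I take $f$ convex, bounded below, with $\mu^n(f=+\infty)<1/2$, set $A:=\{f\le m(f)\}$, and note that $A$ is convex by (a) and $\mu^n(A)\ge 1/2$ by definition of the median. Applying $\mathbf{CCI}_2^\infty(\alpha)$ to $A$ yields $\mu^n(A_{s,2})\ge 1-\alpha(s)$ for all $s\ge 0$. Since, by definition of the inf-convolution with $p=2$,
\[
Q_tf(x)\le m(f)+\frac{1}{t}d_2^2(x,A),\qquad x\in\X^n,
\]
the event $\{Q_tf>m(f)+r\}$ is contained in $\{d_2(\,\cdot\,,A)>\sqrt{tr}\}=\X^n\setminus A_{\sqrt{tr},2}$, whence $\mu^n(Q_tf>m(f)+r)\le \alpha(\sqrt{tr})$, which is exactly \eqref{eq-GRS11convex}.

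For the converse, I start from a convex set $A\subset\X^n$ with $\mu^n(A)\ge 1/2$ and apply \eqref{eq-GRS11convex} to $i_A$. By (b), $i_A$ is convex; it is bounded below by $0$; and $\mu^n(i_A=+\infty)=1-\mu^n(A)\le 1/2$, while $\mu^n(i_A\le 0)=\mu^n(A)\ge 1/2$, so we may take $m(i_A)=0$. A direct computation gives $Q_t i_A(x)=\frac{1}{t}d_2^2(x,A)$, and therefore the event $\{Q_t i_A>r\}$ coincides with $\X^n\setminus A_{\sqrt{tr},2}$. Inequality \eqref{eq-GRS11convex} then reads $\mu^n(\X^n\setminus A_{\sqrt{tr},2})\le \alpha(\sqrt{tr})$, and letting $s=\sqrt{tr}$ range over $(0,\infty)$ recovers \eqref{eq:CCI}.

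The only step that requires any thought is the convexity verification for $i_A$ and for sublevel sets in the \emph{geodesic} (not linear) setting; once this is granted, the argument is a line-by-line copy of the proof of Proposition \ref{prop-GRS11} and there is no genuine obstacle.
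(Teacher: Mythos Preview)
Your proof is correct and follows exactly the approach indicated in the paper: the authors also reduce the argument to the two observations (a) that sublevel sets of convex functions are convex and (b) that $i_A$ is convex when $A$ is convex, and then refer back to the proof of Proposition~\ref{prop-GRS11}. Your write-up simply makes explicit what the paper leaves to the reader.
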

\proof 
By the following two observations the proof of the above proposition (that we omit) becomes essentially identical to the one of  Proposition \ref{prop-GRS11} in section \ref{link}:
$(a)$ By definition, if $f:\X\to \R\cup\{+\infty\}$ is convex then any level set $\{f\leq r\}$, $r\in\R$, is convex; $(b)$ for any convex set $A$, the function $i_A$, that equals 0 on $A$ and $+\infty$ on $\X\setminus A$, is convex.
\endproof
For technical reasons, we will have to distinguish between the following two versions of the Poincar\'e inequality in restriction to convex functions (in short \emph{convex Poincar\'e}):
\begin{equation}\label{Poincare convex -}
\lambda\mathrm{Var}_\mu(f)\leq \int |\nabla^-f|^2\,d\mu,\qquad \forall f \text{ Lipschitz and convex}
\end{equation}
and
\begin{equation}\label{Poincare convex +}
\lambda\mathrm{Var}_\mu(f)\leq \int |\nabla^+f|^2\,d\mu,\qquad \forall f \text{ Lipschitz and convex}.
\end{equation}

The argument used in Point (3) of Remark \ref{rem Poincare} to prove the equivalence between \eqref{Poincare} and \eqref{Poincare+} in the usual setting does not work anymore since if $f$ is convex, $-f$ is no more convex. 
However, if $(\X,d)$ is finite dimensional Banach space or a smooth Riemannian manifold and if one assumes that $\mu$ is absolutely continuous with respect to the Lebesgue (or the volume) measure, by Rademacher's theorem, both gradients in the definition of \eqref{Poincare convex -} and \eqref{Poincare convex +} are equal, except on a set of $\mu$-measure 0, which, in turn, guarantees that  
\eqref{Poincare convex -} and \eqref{Poincare convex +} are equivalent.

\subsection{Dimension free convex concentration implies the convex Poincar\'e inequality \ref{Poincare convex -}}Starting from the functional characterization of $\mathbf{CCI}_2^\infty(\alpha)$ stated in Proposition \ref{concconvfunc} and following the lines of the proof of Theorem \ref{main result}, we shall obtain the first main theorem of this section (a counterpart of Theorem \ref{main result} in the convex situation). 

\begin{thm}\label{main result convex} If $\mu$ satisfies the dimension free convex concentration property $\mathbf{CCI}_2^\infty(\alpha)$, then  $\mu$ satisfies the convex Poincar\'e inequality 
\[
\lambda\mathrm{Var}_\mu(f)\leq \int |\nabla^-f|^2\,d\mu,
\]
for all locally Lipschitz\footnote{By \emph{locally Lipschitz}, we mean Lipschitz on every ball of finite radius.}and convex function with finite variance with the constant $\lambda$ defined by
\[
\sqrt{\lambda}= \sup\left\{\frac{ \overline{\Phi}^{-1} \left(\alpha(r)\right)}{r}; r> 0 \text{ s.t } \alpha(r)\leq 1/2  \right\}.
\]
If moreover $\int_0^{\infty} r\alpha(r)\,dr<\infty$ then the variance of all convex and  Lipschitz functions is finite and thus $\mu$ verifies \eqref{Poincare convex -}.  
\end{thm}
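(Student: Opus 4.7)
The plan is to adapt the proof of Theorem~\ref{main result}, replacing the functional characterization of dimension-free concentration (Proposition~\ref{prop-GRS11}) by its convex analogue (Proposition~\ref{concconvfunc}), which only sees convex test functions. The structural observation making this adaptation possible is that constant-speed geodesics in $(\X^n,d_2)$ are componentwise constant-speed geodesics in $\X$, so if $h\colon\X\to\R$ is convex, then the separable sum $f_n(x_1,\dots,x_n):=\sum_{i=1}^n h(x_i)$ is convex on $(\X^n,d_2)$; moreover the infimum separates, $Q_{1/\sqrt n} f_n(x)=\sum_i Q_{1/\sqrt n} h(x_i)$. These two facts are what is needed to plug $f_n$ into Proposition~\ref{concconvfunc}.

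For bounded Lipschitz convex $h$ with $\int h\,d\mu=0$, the first step is to apply Proposition~\ref{concconvfunc} to $f_n$ with $t=1/\sqrt n$ and $r=\sqrt n u$; the right-hand side becomes $\alpha(\sqrt u)$. From here the argument is essentially the verbatim CLT argument of Theorem~\ref{main result}: use Proposition~\ref{prop:Qt} to get $\sigma_n\to\sigma=\sqrt{\mathrm{Var}_\mu(h)}$ and $\limsup_n\sqrt n\,\mu(h-Q_{1/\sqrt n} h)\leq \frac{1}{4}\int|\nabla^- h|^2\,d\mu$, and apply Theorem~\ref{TCL} to $T_n=f_n/(\sqrt n\sigma)$ (Lindeberg following from the uniform boundedness of $Q_{1/\sqrt n} h$). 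Passing to the limit and optimizing in $u$ yields the convex Poincar\'e inequality with the stated $\lambda$.

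The main obstacle is the extension from bounded Lipschitz convex functions to locally Lipschitz convex functions with finite variance: the two-sided truncation $(h\vee-N)\wedge N$ used in the proof of Theorem~\ref{main result} destroys convexity. The plan is to use instead the one-sided truncation $h_N:=h\vee(-N)$, which preserves convexity, satisfies $h_N\searrow h$ pointwise, and $|h_N|\leq|h|\in L^2(\mu)$. For fixed $N$, $h_N$ is locally Lipschitz and bounded below by $-N$, and the CLT argument still goes through: the family $(Q_{1/\sqrt n} h_N)_n$ is dominated in $L^2$ by $|h|+N$, so Lindeberg follows from dominated convergence, and Proposition~\ref{prop:Qt}(ii) can be adapted via a local Lipschitz constant $L_x$ (controlled pointwise by $|\nabla^- h|$, hence integrable when the right-hand side is finite) to yield the required limsup. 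Letting $N\to\infty$, dominated convergence gives $\mathrm{Var}_\mu(h_N)\to\mathrm{Var}_\mu(h)$, while $\int|\nabla^- h_N|^2\,d\mu\leq\int|\nabla^- h|^2\,d\mu$ since $|\nabla^- h_N|=|\nabla^- h|$ on $\{h>-N\}$ and vanishes on the interior of $\{h<-N\}$.

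For the second assertion, the argument is direct: given a Lipschitz convex $h$ with Lipschitz constant $L$ and median $m$, the sublevel set $A=\{h\leq m\}$ is convex with $\mu(A)\geq 1/2$, and $A_{r,2}\subset\{h\leq m+Lr\}$ by the Lipschitz property. Convex concentration then gives $\mu(h>m+Lr)\leq\alpha(r)$, whence $\E[(h-m)_+^2]\leq 2L^2\int_0^\infty r\alpha(r)\,dr<\infty$. The control of the lower tail of $h$ uses $h(x)\geq h(x_0)-Ld(x,x_0)$, reducing the problem to the $L^2$-integrability of $d(\cdot,x_0)$; in the Busemann setting relevant to Section~6, $d(\cdot,x_0)$ is itself $1$-Lipschitz and convex, so the same tail argument applies directly to it and yields $\int d(\cdot,x_0)^2\,d\mu<\infty$. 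Combining both tails gives $\mathrm{Var}_\mu(h)<\infty$, and the convex Poincar\'e inequality then follows from the first part of the theorem applied to $h$.
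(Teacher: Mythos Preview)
Your overall strategy matches the paper's: separate sums of a convex $h$ are convex on $(\X^n,d_2)$, feed them into Proposition~\ref{concconvfunc}, and run the CLT argument of Theorem~\ref{main result}. Two points deserve attention.

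\textbf{The $(h-Q_th)/t$ estimate.} You propose to ``adapt Proposition~\ref{prop:Qt}(ii) via a local Lipschitz constant $L_x$''. The paper does something sharper and simpler: convexity of $h$ in a geodesic space gives the global inequality $h(x)-h(y)\le d(x,y)\,|\nabla^- h(x)|$, and plugging this directly into the definition of $Q_th$ yields the \emph{uniform} (not just asymptotic) bound
\[
0\le \frac{h(x)-Q_th(x)}{t}\le \frac14\,|\nabla^- h(x)|^2,\qquad \forall t>0.
\]
This is what makes the argument work for $h$ merely locally Lipschitz (and in particular for the truncations $h_N=h\vee(-N)$). Your ``local Lipschitz constant controlled by $|\nabla^- h|$'' is essentially this inequality in disguise, but you should state it as a consequence of convexity, not as a localization of Proposition~\ref{prop:Qt}. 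With this in hand, the paper skips your intermediate ``bounded Lipschitz'' step entirely and treats directly $h$ bounded from below with finite variance, then removes the lower bound by the truncation $h_N=h\vee(-N)$.

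\textbf{The second assertion.} Your control of the lower tail of $h$ relies on $x\mapsto d(x,x_0)$ being convex, which is precisely the Busemann condition --- and Theorem~\ref{main result convex} does \emph{not} assume it (only that $\X$ is geodesic). So as written this step does not prove the theorem in the stated generality. The paper avoids any lower-tail estimate: once the first part is established, for any convex $1$-Lipschitz $f$ bounded from below the upper-tail bound $\mu(f>m(f)+r)\le\alpha(r)$ together with boundedness from below gives $\mathrm{Var}_\mu(f)<\infty$, hence $\mathrm{Var}_\mu(f)\le 1/\lambda$ by the first part; then for general convex $1$-Lipschitz $f$, apply this to $f\vee(-N)$ and pass to the limit via Fatou. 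This bootstrap through the already-proved Poincar\'e bound is what makes the Busemann hypothesis unnecessary here.
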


\proof The proof is very similar to the proof of Theorem  \ref{main result}, but one needs to take care of the technical difficulties coming from the restriction to convex sets/functions. We give here only the main lines and omit most of the details.
Also, we may use the notation of the proof of Theorem  \ref{main result}.

Let $h:\X \to \R$ be a locally Lipschitz convex function such that $\int h\,d\mu=0$ and $\int h^2\,d\mu<+\infty$.
For all $n\in \N^*$, define $f_n:\X^n\to \R^+$ by 
\[
f_n(x)=h(x_1)+\cdots+h(x_n),\qquad \forall x=(x_1,\ldots,x_n)\in \X^n.
\]
We first observe that $f_n$ is convex. Indeed, this point is an easy  consequence of the fact that $(x_t)_{t\in [0,1]}$ is a geodesic in $(\X^n,d_2)$  if and only if for all $i\in \{1\ldots,n\}$, $(x_{i,t})_{t\in [0,1]}$ are  geodesics in $(\X,d)$ (where $x_t=(x_{1,t},\ldots,x_{n,t})$). Therefore we may apply Proposition \ref{concconvfunc} to the function $f_n$. 

For the next step of the proof,  we need some analogue of Proposition \ref{prop:Qt} for convex locally Lipschitz functions $h \colon \X\to \R$ (not necessarily globally Lipschitz). From  the  
 definition of the convexity property of $h$, one may easily check that  for all  $x,y\in \X$ 
\[
h(x)-h(y)\leq d(x,y)|\nabla^- h(x)|.\]
It follows that for all $t>0$,
\begin{align*}
0\leq\frac{h(x)-Q_th(x)}{t}
&=
\sup_{y\in\X}\left\{\frac{h(x)-h(y)}{t} - \frac{d^2(x,y)}{t^2}\right\} 
 \leq 
\sup_{y\in\X}\left\{|\nabla^- h(x)| \frac{d(x,y)}{t} - \frac{d^2(x,y)}{t^2}\right\} \\
& =
\sup_{r \in \mathbb{R}}\left\{|\nabla^- h(x)| r - r^2\right\}
\leq \frac{1}{4}|\nabla^- h(x)| ^2<\infty.
\end{align*}
This implies in particular that $Q_th(x)\to h(x)$ for all $x\in \X$. Moreover, it holds
 \[
\sqrt{n}\mu \left(h-Q_{1/\sqrt{n}}h \right) \leq\frac{1}{4}\int |\nabla^- h|^2\,d\mu,\qquad \forall n\in\N^*.
\]
Let us first assume that $h$ is bounded from below. Then it holds, $|Q_{1/\sqrt n}h|\leq |h|+|\inf h|$ for all $n\in\N^*$. Since $\int h^2\, d\mu<\infty$, applying the dominated convergence theorem yields to
 \[
 \lim_{n\to+\infty} \sigma_n^2=\lim_{n\to+\infty} \mathrm{Var}_{\mu} (Q_{1/\sqrt{n}} h)=\mathrm{Var}_{\mu} ( h)=\sigma^2.
 \]
 Since $\int h^2\,d\mu<+\infty$, we may  show that  any  median of  $m_n$ of $f_n/(\sqrt n \sigma)$ tends to 0 as $n$ goes to $+\infty$. As a consequence, \eqref{vacances} holds for $n$ sufficiently large. The rest of the proof is identical to the one of Theorem  \ref{main result}:
we  apply the Central Limit Theorem (Theorem \ref{TCL}), observing that the Lindeberg condition holds since $|Q_{1/\sqrt n}h|\leq |\inf h|+|h|$ for all $n\in\N^*$, and $\int h^2\, d\mu<\infty$.
This proves the claim in the case where $h$ is assumed bounded from below. To show that the Poincar\'e inequality still holds if $h$ is not bounded from below, one considers the approximation sequence $h_n=h\vee -n$, $n\in\N^*$ (note that $h_n$ is convex) and one follows the same line of reasoning as in the end of the proof of Theorem \ref{main result}. 

Finally, to complete the proof, observe that if $f:\X\to \R$ is convex and $1$-Lipschitz, then $A=\{f\leq m(f)\}$ is a convex set with $\mu(A)\geq 1/2$, and since $A_{r,2}\subset \{f\leq m+r\}$, we conclude from $\mathbf{CCI}_2(\alpha)$ that
\[
\mu(f>m(f)+r)\leq \alpha(r),\qquad \forall r\geq 0.
\]
Since $\int_0^{\infty} r\alpha(r)\,dr<\infty$, an integration by part shows that $\int [f-m(f)]_+^2\,d\mu<\infty.$ Therefore, if $f$ is convex, $1$-Lipschitz and bounded from below, then $f$ has a finite variance and so applying the first part of the proof, we conclude that $\mathrm{Var}_\mu(f)\leq 1/\lambda.$ Using the same truncation as above, we see that this inequality extends to all convex and $1$-Lipschitz functions. This complete the proof.
\endproof

\subsection{Convex Poincar\'e inequality implies exponential dimension free convex concentration}To get the converse, namely to get a counterpart of Gromov-Milman Theorem for convex sets (that the convex Poincar\'e inequalities \eqref{Poincare convex -} or \eqref{Poincare convex +} imply the dimension free concentration property $\mathbf{CCI}_2^\infty(\alpha)$) with an exponential profile, we need to add the structural assumption that the underlying metric space is of Busemann type.

Recall that $(\X,d)$ is said to be a \emph{Busemann's space} if the distance  $d:\X^2\to\R^+$ is a convex function (of both variables). 
Banach spaces are obvious examples of such spaces. Another important class of Busemann's spaces are complete connected Riemannian manifolds with \emph{non positive} sectional curvature. We refer to \cite{BBI01}, \cite{BH99}, \cite{P05} for more informations on the topic and a proof of this statement.

Let us mention some elementary properties of Busemann's spaces: 
\begin{itemize}
\item[(B1)] if $\X$ is a Busemann's space, any two points of $\X$ are joined by a unique constant speed geodesic; 
\item[(B2)] for any convex subset $A\subset \X$, the function $\X \ni x \mapsto d(x,A)$ is convex;
\item[(B3)] if $(\X,d)$ is a Busemann's space, then $(\X^n,d_2)$ is a Busemann's space.
\end{itemize}

We are now in position to state our second main theorem (a counterpart of Gromov-Milman's Theorem \ref{Gromov Milman}). % The second result of this part is the following.

\begin{thm}\label{Gromov Milman convex} 
Let $(\X,d)$ be a geodesic space and $\mu$ be a probability measure.
Assume one of the following hypothesis:
either
\begin{itemize}
\item[(a)] 
$\X$ is a Busemann's space %satisfying Hypothesis $(H)$ 
and $\mu$ satisfies the convex Poincar\'e inequality \eqref{Poincare convex +} with a constant $\lambda>0$.
\end{itemize}
or
\begin{itemize}
\item[(b)] 
$(\X,\|\cdot\|)$  is a Banach space (with $\|x-y\|=d(x,y)$, $x,y \in \X$) and 
$\mu$ satisfies the convex Poincar\'e inequality \eqref{Poincare convex -} with a constant $\lambda>0$;
\end{itemize}
Then $\mu$ satisfies the dimension free convex concentration property $\mathbf{CCI}_2^\infty(\alpha)$ with the profile 
\[
\alpha(r)=b\exp(-a\sqrt{\lambda}r),\qquad r\geq0,
\]
where $a,b$ are universal constants.
\end{thm}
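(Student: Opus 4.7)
The plan is to adapt the proof of Theorem~\ref{Gromov Milman} to the convex setting, but to replace the direct Herbst exponential estimate of Proposition~\ref{Herbst} by a two-step detour. The obstruction is that the extension from bounded to unbounded test functions in Proposition~\ref{Herbst} relies on the truncation $(f\vee -N)\wedge N$, which destroys the convexity of $f$; consequently we cannot apply a convex Poincaré hypothesis to $e^{sf}$ for unbounded convex $f$ in the same way. Instead, I would first prove a weak (``minimal'') concentration inequality for convex sets via a variance bound, and then invoke Proposition~\ref{Cor Tal 2} to boost it to the claimed exponential profile.

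The first ingredient is a tensorization of the convex Poincaré inequality. Since coordinate-wise curves in $(\X^n,d_2)$ are geodesics in each coordinate, the restriction $g_i$ of any convex Lipschitz $g:\X^n\to\R$ to the $i$-th coordinate (others frozen) is itself convex and Lipschitz on $\X$. Applying \eqref{Poincare convex +} (under hypothesis (a)) or \eqref{Poincare convex -} (under hypothesis (b)) to each $g_i$, integrating and summing via the Efron–Stein identity yields, for every convex Lipschitz $g$,
\[
\lambda\,\mathrm{Var}_{\mu^n}(g)\;\leq\;\int\sum_{i=1}^n |\nabla_i^{\pm} g|^2\,d\mu^n,
\]
with the appropriate sign. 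Note that, by Remark~\ref{rem Poincare}(1) transposed to the convex setting, this already forces $\mathrm{Var}_{\mu^n}(g)<\infty$ whenever $g$ is convex Lipschitz.

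Let now $A\subset\X^n$ be convex with $\mu^n(A)\geq 1/2$, and consider $f_{A,\varepsilon}(x):=\sqrt{\varepsilon+d_2^2(x,A)}$ from Section~\ref{sec:gromov-milman}. By (B3) the space $(\X^n,d_2)$ is Busemann, so by (B2) the function $d_2(\cdot,A)$ is convex, and since $t\mapsto\sqrt{\varepsilon+t^2}$ is convex and non-decreasing on $[0,\infty)$, $f_{A,\varepsilon}$ is convex. Lemma~\ref{approx}(i) applied to $R_\varepsilon i_A$ gives $\sum_i|\nabla_i^+ f_{A,\varepsilon}|^2\leq 1$, which is directly what we need in case (a). In case (b), I would additionally use that, for a convex function $\varphi$ on a Banach space, the subdifferential description $|\nabla^-\varphi|(x)=\inf_{\xi\in\partial\varphi(x)}\|\xi\|_*\leq\sup_{\xi\in\partial\varphi(x)}\|\xi\|_*=|\nabla^+\varphi|(x)$ holds pointwise, so $\sum_i|\nabla_i^- f_{A,\varepsilon}|^2\leq\sum_i|\nabla_i^+ f_{A,\varepsilon}|^2\leq 1$. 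In both cases the tensorized Poincaré yields $\mathrm{Var}_{\mu^n}(f_{A,\varepsilon})\leq 1/\lambda$. Since $f_{A,\varepsilon}\equiv\sqrt{\varepsilon}$ on $A$, the value $\sqrt{\varepsilon}$ is a median; combining the classical estimate $|\mathrm{median}-\mathrm{mean}|\leq\sqrt{2\,\mathrm{Var}}$ with Chebyshev's inequality and letting $\varepsilon\to 0$ by monotone convergence, I obtain
\[
\mu^n\!\bigl(d_2(x,A)>r\bigr)\;\leq\;\frac{1}{\lambda(r-\sqrt{2/\lambda})^2},\qquad r>\sqrt{2/\lambda},
\]
and with $r_o:=3\sqrt{2/\lambda}$ this gives a uniform (in $n$) minimal concentration $\mu^n(A_{r_o,2})\geq 7/8$ for every convex $A$ of measure $\geq 1/2$.

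To conclude, I would apply Proposition~\ref{Cor Tal 2} with $p=2$ to the class $\mathcal{A}_n$ of convex Borel subsets of $\X^n$. The two stability conditions are immediate: (i) if $A\in\mathcal{A}_n$ then $A_{r,2}=\{d_2(\cdot,A)\leq r\}$ is a sublevel set of the convex function $d_2(\cdot,A)$ on the Busemann space $\X^n$, hence convex; (ii) products of convex sets are convex. The proposition then converts the minimal concentration above into the exponential bound $\mu^n(A_{r,2})\geq 1-b\exp(-a\sqrt{\lambda}\,r)$ with universal constants $a,b>0$, which is the claim. The main obstacle is the boundedness issue blocking a direct Herbst-type argument for the unbounded $f_{A,\varepsilon}$, sidestepped here by the minimal-concentration-plus-boost strategy; a secondary subtlety is the transfer between $|\nabla^-|$ and $|\nabla^+|$ versions of the convex Poincaré inequality, resolved in case (b) by the subdifferential comparison above.
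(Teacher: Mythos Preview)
Your overall architecture matches the paper's exactly: tensorize the convex Poincar\'e inequality, extract from it a weak (polynomial) dimension-free convex concentration bound via a variance estimate on a regularized distance-to-$A$ function, and then upgrade to the exponential profile via Proposition~\ref{Cor Tal 2}. The treatment of case~(a) is essentially identical to the paper (same test function $f_{A,\varepsilon}$, same use of Lemma~\ref{approx}(i) to control $\sum_i|\nabla_i^+ f_{A,\varepsilon}|^2$); your variance-to-deviation step uses Chebyshev plus the median--mean estimate, whereas the paper passes through $\int |f-m(f)|\,d\mu^n\leq \sqrt{\mathrm{Var}}$ and Markov, which gives a slightly cleaner $1/(\sqrt{\lambda}r)$ bound --- both are fine and lead to the same self-improvement.

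The genuine difference is in case~(b). The paper does \emph{not} use $f_{A,\varepsilon}$ there: it applies $R_\varepsilon$ to $f_A=d_2(\,\cdot\,,A)$ and invokes Lemma~\ref{approx}(iii) (which needs the Banach structure) to guarantee convexity of $R_\varepsilon f_A$, together with Lemma~\ref{approx}(i) which gives directly the $|\nabla^-|$ control required by \eqref{Poincare convex -}. Your route keeps $f_{A,\varepsilon}$ and instead observes that for a convex function on a Banach space one has $|\nabla^-\varphi|\leq |\nabla^+\varphi|$ pointwise. This is correct (indeed, taking any $\xi\in\partial\varphi(x)$ one has $\varphi(x)-\varphi(y)\leq \|\xi\|_*\|x-y\|$ while $\varphi(x+tv)-\varphi(x)\geq t\langle\xi,v\rangle$; or, even more simply, reflect $y\mapsto z=2x-y$ and use midpoint convexity), and it has a nice consequence you do not spell out: on a Banach space, hypothesis~(b) already implies hypothesis~(a), so case~(b) is in fact subsumed by case~(a). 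Your argument thus bypasses Lemma~\ref{approx}(iii) entirely. Two small caveats: the equalities $|\nabla^-\varphi|(x)=\inf_{\xi}\|\xi\|_*$ and $|\nabla^+\varphi|(x)=\sup_{\xi}\|\xi\|_*$ that you quote are stronger than what you actually use or prove --- only the chain of inequalities $|\nabla^-\varphi|(x)\leq \|\xi\|_*\leq |\nabla^+\varphi|(x)$ is needed; and in the final step, Proposition~\ref{Cor Tal 2} yields the exponential bound only for convex sets with $\mu^n(A)\geq c$ for some universal $c\in[1/2,1)$, so (as the paper does) you should first use your polynomial bound once more to enlarge any $A$ with $\mu^n(A)\geq 1/2$ to a set of measure $\geq c$ before iterating.
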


%
%\begin{thm}\label{Gromov Milman convex} 
%Let $(\X,d)$ be a Busemann geodesic space. %satisfying \emph{one} of the following hypothesis:
%%\begin{itemize}
%%\item[(1)] $(X,d)$ is a complete CAT(0) space;
%%\item[(2)] $(X,d)$ is a Busemann's space satisfying Hypothesis $\mathbf{(H)}$.
%%\end{itemize}
%Assume furthermore that $\mu$ satisfies the Poincar\'e inequality \eqref{Poincare+} restricted to the class of convex functions with a constant $\lambda>0$. Then it satisfies the dimension free convex concentration property $\mathbf{CCI}_2^\infty(\alpha)$ with the profile 
%\[
%\alpha(r)=b\exp(-a\sqrt{\lambda}r),\qquad r\geq0,
%\]
%where $a,b$ are universal constants.
%\end{thm}

%Observe that Theorem \ref{Gromov Milman convex banach} and Theorem \ref{Gromov Milman convex} are dealing with different type of Poincar\'e inequalities.

As a direct corollary of Theorem \ref{Gromov Milman convex} and Theorem \ref{main result convex}, when $\X$ is a  Banach space, we conclude that, similarly to the general case, the set of probability measures $\mu$ satisfying  the dimension free convex concentration property $\mathbf{CCI}_2^\infty(\alpha)$ with exponential profile coincides  with the set of probability measures verifying the convex Poincar\'e inequality \eqref{Poincare convex -}.

The remaining of the section is dedicated to the proof of Theorem  \ref{Gromov Milman convex}. There is a technical obstacle for applying Herbst argument as in Proposition \ref{Herbst}. Namely, since the class of convex functions is not stable under truncation from above (if $f$ is convex and $a \in \R$, $\min(f; a)$ is in general not convex) it is delicate to deal safely with $\int e^{sf}\,d\mu$, $s\geq 0$. Although a method based on $p$-th moments could perhaps be considered in replacement,
%\footnote{Note that the convex Poincar\'e inequalities \eqref{Poincare convex -} and \eqref{Poincare convex +} imply by an immediate recurrence the finiteness of all $p$-th moments of a non negative, convex and Lipschitz function.}
we use instead an argument based on Corollary \ref{Cor Tal 2}: we first show that under \eqref{Poincare convex -} and \eqref{Poincare convex +}, $\mu$ verifies the dimension free convex concentration property $\mathbf{CCI}_2^\infty(\alpha)$ with some polynomial concentration profile and then we upgrade it into an exponential one using Corollary \ref{Cor Tal 2}.

\begin{proof}[Proof of Theorem \ref{Gromov Milman convex}] 
We start as in the proof of Proposition \ref{Gromov Milman} by noticing that the convex Poincar\'e inequalities \eqref{Poincare convex -} and \eqref{Poincare convex +} tensorize properly.  
Therefore, if $\mu$ verifies one of the convex Poincar\'e inequalities, then for all $n\in \N^*$, 
\[
\lambda\mathrm{Var}_{\mu^n}(f)\leq \int \sum_{i=1}^n|\nabla_i^{+/-}f|^2\,d\mu^n,
\]
for all Lipschitz and convex function $f:\X^n\to\R$. In particular, if $f$ is $1$-Lipschitz and such that $\sum_{i=1}^n|\nabla_i^{+/-}f|^2\leq 1$, then $\mathrm{Var}_{\mu^n}(f)\leq 1/\lambda$.
Applying Jensen's inequality, we see that
\[
1/\lambda\geq \mathrm{Var}_{\mu^n}(f)=\inf_{a\in \R}\int (f-a)^2\,d\mu^n\geq \left(\inf_{a\in \R}\int |f-a|\,d\mu^n\right)^2=\left(\int |f-m(f)|\,d\mu^n\right)^2
\]
Thus, it follows immediately from Markov's inequality that 
\begin{equation}\label{dev convex}
\mu^n\left(f>m(f)+r\right)\leq \frac{1}{\sqrt{\lambda}r},\qquad \forall r>0.
\end{equation}

Let us first assume that Assumption $(a)$ holds.
Observe that the function $\X^n\to\R:x \mapsto f_A(x):=d_2(x,A)$
is convex and $1$-Lipschitz when $A\subset\X^n$ is a convex set (this follows from properties $(B_2)$ and $(B_3)$ above). Moreover,
using Point $(i)$ of Lemma \ref{approx} and arguing as in the proof of Theorem \ref{Gromov Milman}, we see that $f_{A,\varepsilon}:=\sqrt{\varepsilon+f_A^2}$ satisfies the condition $\sum_{i=1}^n|\nabla_i^{+}f_A|^2\leq 1$. Since the function $u\mapsto \sqrt{\varepsilon+u^2}$ is convex and increasing, the function 
$f_{A,\varepsilon}$ is itself convex. Applying \eqref{dev convex} to $f_{A,\varepsilon}$ and letting $\varepsilon\to 0$, we get
\begin{equation}\label{conc conv elem}
\mu^n(A_{r,2})\geq 1-\frac{1}{\sqrt{\lambda}r},
\end{equation}
for all convex set $A.$ 

In particular, taking $r_o=4/\sqrt{\lambda}$ we are in the framework of Corollary \ref{Cor Tal 2}, with $a_o=1/4$ and $\mathcal{A}_n$ being the class of convex sets of $\X^n.$ (Note that Assumptions $(i)$ and $(ii)$ are well verified by $\mathcal{A}_n$: a product of convex sets is always convex and properties $(B_2)$ and $(B_3)$ above show that the enlargement of a convex set remains convex). We thus conclude that there are universal constants $0<\gamma<1$ and $1/2\leq c<1$ such that, for all $n\in\N^*$, for all convex set $A$ such that $\mu^n(A)\geq c$
\[
\mu^n(A_{r,2})\geq 1-be^{-a\sqrt{\lambda}r},\qquad \forall r\geq 0,
\]
with $b=(1-c)/\gamma$ and $a=-\log(\gamma)/4$. 
Now, if $\mu^n(A)\geq 1/2$, then applying \eqref{conc conv elem} we see that $\mu^n(A_{r_1,2})\geq c$ for $r_1=\frac{1}{\sqrt{\lambda}(1-c)}$. We easily conclude from this that $\mu$ verifies $\mathbf{CCI}_2^\infty(\alpha)$ with $\alpha(r)=b'e^{-a\sqrt{\lambda}r}$, for some other universal constant $b'$.

Assume now that Assumption $(b)$ holds. Let $A\subset \X^n$ be a convex subset. The function $f_A$ defined above is $1$-Lipschitz and convex. Therefore, according to Point $(i)$ and $(iii)$ Lemma \ref{approx}, the function $R_\varepsilon f_A$, where $R_\varepsilon$ is the operator defined in \eqref{defRt}, is $1$ Lipschitz, \emph{convex} and satisfies $\sum_{i=1}^n|\nabla_i^{-}R_\varepsilon f_A|^2\leq 1$. Applying \eqref{dev convex}, we conclude that
\[
\mu^n\left(R_\varepsilon f_A>m(R_\varepsilon f_A)+r\right)\leq \frac{1}{\sqrt{\lambda}r},\qquad \forall r>0.
\]
Moreover, according to Point $(ii)$ of Lemma \ref{approx}, it holds $f_A-\sqrt{\varepsilon}\leq R_\varepsilon f_A\leq f_A$. Inserting this into the deviation inequality above and letting $\varepsilon\to 0$ yields to the conclusion that \eqref{conc conv elem} holds for all convex $A.$ The rest of the proof is identical.\end{proof}

\end{document}